\pgfplotsset{compat=newest}
\definecolor{TUblue}{rgb}{0,0.4,0.6}
\definecolor{TUgray}{rgb}{0.3922,0.3882,0.3882}
\definecolor{TUgreen}{rgb}{0,0.4941,0.4431}
\definecolor{TUmagenta}{rgb}{0.7294,0.2745,0.5098}
\definecolor{TUyellow}{rgb}{0.8824,0.5373,0.1333}
\newcommand{\Eta}{\mathsf{H}}
\newcommand{\coarse}{H}
\newcommand{\fine}{h}
\newcommand{\exact}{\star}
\newcommand{\elll}{{\underline{\ell}}}
\newcommand{\kk}{{\underline{k}}}
\newcommand{\rr}{{\overline{r}}}
\newcommand{\kmax}{{\underline{k}}}
\newcommand{\kmin}{{k_{\textup{min}}}}
\newcommand{\lmax}{{\underline{\ell}}}
\newcommand{\thetamark}{\const{\theta}{mark}}
\newcommand{\lambdalin}{\const{\lambda}{lin}}
\newcommand\f{\boldsymbol{f}}
\newcommand\n{\boldsymbol{n}}
\newcommand\Bup{{\textup{B}}}
\newcommand\Lup{{\textup{L}}}
\newcommand\Mup{{\textup{M}}}
\newcommand\hook{\hookrightarrow}
\newcommand\drawslopetriangle[4][ST]{
  \pgfplotsextra
  {
    \pgfkeys{/pgf/fpu=true}
    \pgfmathsetmacro\leftcoord{#3}
    \pgfmathsetmacro\rightcoord{10*#3}
    \pgfmathsetmacro\bottomcoord{#4}
    \pgfmathsetmacro\topcoord{10^(#2)*#4}
    \pgfkeys{/pgf/fpu=false}

    \coordinate (#1-BL) at (axis cs:\leftcoord,\bottomcoord);
    \coordinate (#1-BR) at (axis cs:\rightcoord,\bottomcoord);
    \coordinate (#1-TL) at (axis cs:\leftcoord,\topcoord);

    \shadedraw[%
      bottom color = black!20,%
      middle color = black!5,%
      top color    = white,%
      draw         = black%
    ]
      (#1-TL) -- (#1-BL) node[midway, left=-2pt] {\scriptsize\(#2\)}
      -- (#1-BR) node[midway, below=-2pt] {\scriptsize\(1\)} -- (#1-TL);
  }
}
\newcommand\drawswappedslopetriangle[4][SST]{
  \pgfplotsextra
  {
    \pgfkeys{/pgf/fpu=true}
    \pgfmathsetmacro\leftcoord{#3/10}
    \pgfmathsetmacro\rightcoord{#3}
    \pgfmathsetmacro\topcoord{#4}
    \pgfmathsetmacro\bottomcoord{10^(-#2)*#4}
    \pgfkeys{/pgf/fpu=false}

    \coordinate (#1-TR) at (axis cs:\rightcoord,\topcoord);
    \coordinate (#1-BR) at (axis cs:\rightcoord,\bottomcoord);
    \coordinate (#1-TL) at (axis cs:\leftcoord,\topcoord);

    \shadedraw[%
      bottom color = black!20,%
      middle color = black!5,%
      top color    = white,%
      draw         = black%
    ]
      (#1-BR) -- (#1-TR) node[midway, right=-2pt] {\scriptsize\(#2\)}
      -- (#1-TL) node[midway, above=-2pt] {\scriptsize\(1\)} -- (#1-BR);
  }
}
\newcommand{\strequal}[2]{\pdf@strcmp{#1}{#2}==0}
\def\A{\boldsymbol{A}}
\newcommand{\labeltext}[2]{%
    \@bsphack
    \csname phantomsection\endcsname 
    \def\@currentlabel{#1}{\label{#2}}%
    \@esphack
}
\title[Newton AILFEM]{Newton's method in adaptive iteratively linearized FEM}
\address{TU Wien, Institute of Analysis and Scientific Computing, Wiedner Hauptstr. 8--10/E101/4, 1040 Vienna, Austria}
\author{Philipp Bringmann~\orcidlink{0000-0002-4546-5165}}
\email{philipp.bringmann@asc.tuwien.ac.at}
\author{Maximilian Brunner~\orcidlink{0000-0003-0636-1491}}
\email{maximilian.brunner@asc.tuwien.ac.at \quad \rm (corresponding author)}
\author{Dirk Praetorius~\orcidlink{0000-0002-1977-9830}}
\email{dirk.praetorius@asc.tuwien.ac.at}
\keywords{adaptive finite element method, semilinear PDEs, Newton method, a~posteriori error estimation, adaptive iterative linearized finite element method, convergence analysis, optimal convergence rates}
\subjclass[2010]{65N30, 65N50, 65N15, 65Y20, 41A25}
\thanks{%
    This research was funded in whole or in part by the Austrian Science Fund (FWF)
    [\href{https://www.fwf.ac.at/en/research-radar/10.55776/F65}{10.55776/F65},
    \href{https://www.fwf.ac.at/en/research-radar/10.55776/I6802}{10.55776/I6802}, 
    \href{https://www.fwf.ac.at/en/research-radar/10.55776/PAT3699424}{10.55776/PAT3699424},
and
    \href{https://www.fwf.ac.at/en/research-radar/10.55776/PAT3446525}{10.55776/PAT3446525}].
    For open access purposes, the author has applied a CC BY public copyright license
    to any author accepted manuscript version arising from this submission.
}
\definecolor{psychedelicpurple}{rgb}{0.87, 0.0, 1.0}
\begin{document}


\maketitle
\thispagestyle{fancy}

\begin{abstract}
    This paper concerns the inclusion of Newton's method into an adaptive finite element method (FEM)
    for the solution of nonlinear partial differential equations (PDEs).
    It features an adaptive choice of the damping parameter in the Newton iteration for the discretized nonlinear problems on each level
    ensuring both global linear and local quadratic convergence.
    In contrast to energy-based arguments in the literature, a novel approach in the analysis considers the discrete dual norm of the residual as a computable measure for the linearization error.
    As a consequence, this paper provides the first convergence analysis with optimal rates
    of an adaptive iteratively linearized FEM beyond energy-minimization problems.
    The presented theory applies to strongly monotone operators with locally Lipschitz continuous Fréchet derivative.
    We present a class of semilinear PDEs fitting into this framework and provide numerical experiments to underline the theoretical results.
\end{abstract}


\section{Introduction}

\subsection{State of the art}

Newton's method is the prime selection for the iterative solution of nonlinear problems due to its striking local quadratic convergence.
It aims at finding the root of the residual of the equation.
In this paper, we discretize the nonlinear problem first (by means of an adaptive finite element method) and
then apply Newton's method to the resulting finite-dimensional discrete problem.
The refinement of the underlying mesh leads to an improved discrete approximation of the unknown exact solution.
In optimization, linearization is performed on the continuous level and these continuous problems are subsequently discretized.
The coupling of Newton's method with uniform mesh refinement is apparently clear due to \emph{mesh-independence},
backed by a quote from~\cite{dp1992}:
\begin{flushright}
    \itshape
    \enquote{Mesh independence of Newton's method means that Newton's
    method applied to a family of finite-dimensional discretizations of a Banach space
    nonlinear operator equation behaves essentially the same for all sufficiently fine discretizations.}
\end{flushright}
The asymptotic mesh independence is a striking theoretical feature of Newton's method of nonlinear operator equations;
see, e.g., \cite{abpr1986, dp1992, al1996, wsd2005}.
Available results are, however, restricted to the case of (sufficiently fine) quasi-uniform triangulations.

Adaptive finite element methods (AFEMs) are powerful schemes to discretize partial differential equations.
Without a priori knowledge of the exact solution, they automatically refine the underlying mesh.
This allows to impressively reduce the computational cost by successively generating meshes
that are only resolved where the error contribution is indicated to be the most significant.
The convergence of AFEMs for nonlinear PDEs with an \emph{exact} solution of the discrete nonlinear equation is well-investigated in the literature.
Some pioneering contributions on plain convergence include
\cite{v2002,dk2008} for the \(p\)-Laplace equation and
\cite{c2008,bc2008} for degenerate convex minimization problems.
Optimal convergence rates for adaptive nonlinear FEMs, though under the assumption that the arising nonlinear discrete system of equations are solved exactly, have been established
in \cite{gmz2012} for quasilinear PDEs with strongly monotone nonlinearity,
in \cite{bdk2012} for the \(p\)-Laplace equation,
and in \cite{bbimp2022} for goal-oriented AFEM solving strongly monotone and locally Lipschitz continuous semilinear PDEs.

Practical realizations of AFEMs include an iterative inexact solution of the discrete nonlinear equation
such as fixed-point iterations~\cite{gmz2011,cw2017,hw2020:ailfem} or Newton's method~\cite{ev2013,aw2015,amw2017,h2023}.
A crucial aspect in the design of such combined adaptive schemes is the automatic balance
between the discretization and the linearization error \cite{aev2011}.
Several works are devoted to the joint convergence analysis of adaptive iteratively linearized finite element methods (AILFEMs)
\cite{ghps2018,hw2020:convergence,hpw2021}
and to the combination with the iterative algebraic solution of the linearized discrete equations
for optimal complexity, i.e., optimal convergence rates with respect to the overall computational cost \cite{ghps2021,hpsv2021,bps2024}.
Recent contributions focus on robust convergence results independent of the choice of adaptivity or stopping parameters for the linearization
\cite{bfmps2025,mps2024}.

The convergence analysis in all mentioned publications is based on
the contraction of an energy difference and is, therefore, restricted to the case of symmetric nonlinear PDEs.
Moreover, it requires a sufficiently small damping parameter in the Newton scheme to ensure a contraction of the energy difference that prevents the local quadratic convergence of Newton's method.
Based on intrinsic energy estimates, the paper~\cite{h2023} proposes an adaptive damping strategy and empirically observes convergence to an undamped Newton's method, i.e., $\delta \nearrow 1$. 

This paper aims to overcome the limitations of energy-based arguments by introducing a fully residual-based adaptive algorithm. Its convergence analysis even proves that the proposed adaptive damping strategy eventually ensures an undamped Newton's method for locally Lipschitz continuous problems.

\subsection{Nonlinear model problem}\label{section:nonlinearModelProblem}

Let $\XX$ be a Hilbert space equipped with the inner product $\edual{\,\cdot\,}{\,\cdot\,}$ and
its induced norm $\enorm{\,\cdot\,}$. The topological dual space of $\XX$ is denoted by $\XX'$ and equipped with the dual norm $\norm{\, \cdot \,}_{\XX'}$.
Given a nonlinear operator $\AA \colon \XX \to \XX'$ and a right-hand side $F \in \XX'$,
this paper investigates the nonlinear operator equation seeking \(u^\exact \in \XX\) such that
\begin{equation}
    \label{eq:weakform}
    \dual{\AA u^\exact}{v} = \dual{F}{v} \quad \text{ for all } v \in \XX,
\end{equation}
where \(\dual{\,\cdot\,}{\,\cdot\,}\) denotes the duality pairing on \(\XX' \times \XX\).
The existence and uniqueness of the solution $u^\exact \in \XX$
follows from the Browder--Minty theorem on monotone operators~\cite[Theorem~26.A]{zeidler} under the two following assumptions:
\begin{description}
    \item[\bfseries (N1) strong monotonicity of $\AA$]
        \labeltext{N1}{assump:stronglyMonotone}
        There exists a constant $\alpha > 0$ such that
        \[
            \alpha \, \enorm{v - w}^2
            \leq
            \dual{\AA v - \AA w}{v - w}%
            \quad \text{for all } v, w \in \XX.
        \]
    \item[\bfseries (N2) local Lipschitz continuity of $\AA$]
        \labeltext{N2}{assump:locallyLipschitz}
        For all $\vartheta > 0$, there exists $\Lup[\vartheta] > 0$ such that
        \[
            \dual{\AA v - \AA w}{\varphi}
            \leq
            \Lup[\vartheta] \enorm{v - w} \enorm{\varphi}
            \text{ for all } v, w, \varphi \in \XX
            \text{ with } \max\big\{ \enorm{v}, \enorm{v - w}\big\} \leq \vartheta.
        \]
\end{description}
The strong monotonicity~\eqref{assump:stronglyMonotone} ensures Lipschitz continuity of the solution operator, i.e.,
\begin{equation}
    \label{eq:inverse_Lipschitz}
    \alpha \, \enorm{v - w}
    \leq
    \norm{\AA v - \AA w}_{\XX'}
    \quad \text{ for all } v, w \in \XX.
\end{equation}
Moreover, the well-posedness transfers to any closed subspace $\XX_\coarse \subseteq \XX$
with $u_\coarse^\exact \in \XX_\coarse$ solving
\begin{equation}\label{eq:weakformDiscrete}
    \dual{\AA u_\coarse^\exact}{v_\coarse} = \dual{F}{v_\coarse} \quad \text{ for all } v_\coarse \in \XX_\coarse.
\end{equation}

Without loss of generality, suppose that $\AA0 \neq F \in \XX'$.
Due to~\cite[Proposition~2]{bbimp2022cost},
the (discrete) exact solutions $u^\exact \in \XX$ and $u_\coarse^\exact \in \XX_\coarse \subseteq \XX$
satisfy a uniform bound in the energy norm
\begin{equation}\label{eq:exact:bounded}
    \max\big\{ \enorm{u^\exact}, \enorm{u^\exact_\coarse} \big\}
    \le
    \Bup_0
    \coloneqq
    \frac{1}{\alpha} \, \norm{F - \AA 0}_{\XX'} \neq 0
\end{equation}
as well as a C\'ea-type estimate
\begin{equation}
    \label{eq:cea}
    \enorm{u^\exact - u_\coarse^\exact}
    \le
    \Ccea \min_{v_\coarse \in \XX_\coarse}  \enorm{u^\exact -v_\coarse}
    \quad \text{with } \quad \Ccea = \frac{\Lup[2 \Bup_0]}{\alpha}.
\end{equation}

\subsection{Main results}

In section~\ref{section:linearization},
Newton's method is employed for finding the root of the residual $v_\coarse \mapsto F- \AA v_\coarse$
for some approximation $v_\coarse \approx u^\exact$ in a fixed discrete space $\XX_\coarse$.
We measure the linearization error in terms of a discrete dual norm \(\Vert F - \AA v_\coarse \Vert_{\XX_\coarse'}\).
This term is computable at the expense of solving a discrete linear problem and is used to steer the adaptive damping strategy of Newton's method.
The discrete residual term is the quantity of interest in the established analysis for Newton's method and, thus,
the convergence proof may follow the global convergence analysis of Newton's method \cite[Theorem~2.12]{d2004}.
Essential modifications have to be made to account for the fact that $\AA$ is only local Lipschitz continuous in the situation at hand.

The second part of the paper in section~\ref{section:adaptive_mesh_refinement} and later proposes an adaptively damped Newton's method that is coupled with an adaptive mesh-refining algorithm,
where adaptivity is also understood in the sense that the algorithm decides the stopping of the Newton linearization based on the discretization error.
We prove full R-linear convergence irrespective of the involved adaptivity parameters, i.e.,
contraction of a quasi-error quantity regardless whether mesh refinement or a linearization step is performed.
The only limitation is on the choice of the minimal number of Newton steps on each mesh level. This limitation, however, is expected to be of less significance in practice, in particular when the algorithm is in the quadratic convergence regime.
Our analytical findings culminate in the proof of optimal convergence rates
with respect to an idealized cost measure without exploiting any energy structure.

\subsection{Outline}

This paper is organized as follows.
Section~\ref{section:linearization} introduces the Newton linearization on a fixed discrete subspace
and derives a key a posteriori estimate in Lemma~\ref{lemma:aposteriori}.
It presents a global convergence analysis for the Newton method with adaptive damping strategy in Algorithm~\ref{algorithm:ADN}.
Section~\ref{section:adaptive_mesh_refinement} introduces the coupled Algorithm~\ref{algorithm:AILFEM}
with adaptive mesh refinement and the adaptively damped Newton method for the solution of the discrete nonlinear problem on each level.
Moreover, it discusses the increase of the discrete dual norm under mesh refinement and how it is controlled in the algorithm.
Section~\ref{section:mainResults} presents the main results of full R-linear convergence in Theorem~\ref{theorem:fullRLinearConvergence}
and of optimal convergence rates understood with respect to computational complexity in Theorem~\ref{theorem:optimalRates}.
Finally, section~\ref{section:applications} is devoted to the application of the presented theory
to a semilinear, locally Lipschitz continuous model problem and concludes the paper with numerical experiments.

Throughout, the notation \(A \lesssim B\) is used to shorten the estimate \(A \leq C\,B\) with a generic constant \(C > 0\)
which may depend on the problem data, the shape regularity of underlying triangulations, or the polynomial degree of the discretization,
but not on the mesh size and thus not on the dimension \(\dim(\XX_H)\) of the discrete space.
The equivalence $A \eqsim B$ abbreviates \(A \lesssim B\) and \(B \lesssim A\).



\section{Newton linearization}
\label{section:linearization}

This section is devoted to the iterative solution of the nonlinear discrete problem~\eqref{eq:weakformDiscrete}
on a fixed discrete space $\XX_\coarse \subseteq \XX$.
It introduces the Newton method with an adaptive damping strategy and
extends its convergence analysis to locally Lipschitz operator equations.

\subsection{Iterative procedure}
\label{subsection:Newton_definition}

The Newton iteration requires the operator $\AA\colon \XX \to \XX'$ to be Fr\'echet differen\-tiable at any $v \in \XX$,
i.e., there exists a bounded linear operator $\d{\AA}[v] \in \mathcal{L}(\XX, \XX')$ such that

\(\AA(v + w) = \AA(v) + \d{\AA}[v] w + o(\enorm{w})\) for all \(v, w \in \XX\) or, more precisely,
\begin{equation}
    \label{eq:frechet}
    \lim_{w \in \XX, \, \enorm{w} \to 0}
    \frac{\norm{\AA(v+w) - \AA(v) -\d\AA[v]w}_{\XX'}}{\enorm{w}} 
    = 0
    \quad \text{ for all } v \in \XX.
\end{equation}

Recall the operator norm on the space $\mathcal{L}(\XX, \XX')$ of  bounded linear operators
\begin{equation*}
    \norm{\d{\AA}[v]}_{\mathcal{L}(\XX, \XX')}
    \coloneqq
    \sup_{w \in \XX, \,\enorm{w} = 1}
    \norm{\d{\AA}[v] w}_{\XX'}
    \coloneqq
    \sup_{w \in \XX, \,\enorm{w} = 1}
    \sup_{\varphi \in \XX, \,\enorm{\varphi} = 1}
    \dual{\d{\AA}[v] w}{\varphi}.
\end{equation*}

Throughout this paper, we apply the Newton iteration to solve the nonlinear discrete problem~\eqref{eq:weakformDiscrete}
on any closed subspaces $\XX_\coarse \subseteq \XX$.
Given some approximation $v_\coarse \in \XX_\coarse$ to the unknown exact solution \(u_\coarse^\exact\),
the Newton update $\varrho_\coarse \in \XX_\coarse$ is determined by the \emph{linear} residual problem
\begin{equation}
    \label{eq:Newton_update}
    \dual{\d{\AA}[v_\coarse]\varrho_\coarse}{w_\coarse}
    =
    \dual{F - \AA v_\coarse}{w_\coarse}
    \quad\text{ for all } w_\coarse \in \XX_\coarse.
\end{equation}
For a damping parameter $0 < \delta \leq 1$,
the Newton iteration is formally described by the process function $\Phi(\delta; \, \cdot \,)\colon \XX_\coarse \to \XX_\coarse$ with
\begin{equation}
    \label{eq:update:structure}
    \Phi(\delta; v_\coarse)
    \coloneqq
    v_\coarse + \delta\,\varrho_\coarse.
\end{equation}

The existence and uniqueness of $\varrho_\coarse \in \XX$ solving~\eqref{eq:Newton_update}
and, thus, the well-definedness of $\Phi(\delta; \, \cdot)$
is guaranteed by the Lax--Milgram lemma~\cite{lm1954} under the following assumptions:
\begin{description}
    \item[\bfseries (N3) strong monotonicity of $\d{\AA}$]\labeltext{N3}{assump:derivativeMonotone}
        There exists a constant $\beta > 0$ such that
        \[
            \beta \, \enorm{w}^2
            \leq
            \dual{\d{\AA}[v] w}{w}
            \quad\text{ for all } v, w \in \XX,
        \]
    \item[\bfseries (N4) local Lipschitz continuity of $\d{\AA}$]\labeltext{N4}{assump:derivativeLocallyLipschitz}
        For all $\vartheta > 0$, there exists $\Mup[\vartheta] > 0$ such that
        \begin{align*}
            \norm{\d{\AA}[v]- \d{\AA}[w]}_{\LL(\XX, \XX')}
            &\leq
            \Mup[\vartheta] \, \enorm{v - w}
            \\
            &\phantom{{}={}}\quad
            \text{ for all } v, w \in \XX
            \text{ with }
            \max\{\enorm{v}, \enorm{v - w}\} \leq \vartheta.
        \end{align*}
        Note that $\Mup[\,\cdot\,]$ is uniformly bounded from below and monotone, i.e.,
        $0\le \vartheta_1 \le \vartheta_2$ implies $0 <\Mup[0] \le \Mup[\vartheta_1] \le \Mup[\vartheta_2]$.
\end{description}
These assumptions ensure that the derivative $\d{\AA}[v_\coarse]$ is an invertible linear operator
from $\XX_\coarse$ to its topological dual space $\XX_\coarse'$ for every \(v_\coarse \in \XX_\coarse\)
and that the inverse is bounded in the sense of
\begin{equation}\label{eq:Newton:boundedInverse}
    \beta \, \enorm{w_\coarse}
    \leq
    \norm{\d{\AA}[v_\coarse] w_\coarse}_{\XX_\coarse'} \coloneqq \sup_{\varphi_\coarse \in \XX_\coarse, \enorm{\varphi_\coarse}=1} \dual{\d{\AA}[v_\coarse] w_\coarse}{\varphi_\coarse}
    \quad\text{ for all } v_\coarse, w_\coarse \in \XX_\coarse.
\end{equation}

\begin{remark}\label{remark:assumptions}
    \emph{(i)}
    For $v, w \in \XX$ and $t \in \R$, it holds
    \begin{equation*}
        \alpha \, t^2 \, \enorm{w}^2
        \eqreff{assump:stronglyMonotone}{\le}
        \dual{\AA(v+ tw) - \AA(v) - \d\AA[v](tw)}{tw} + t^2 \, \dual{\d\AA[v]w}{w}.
    \end{equation*}
    The division by \(t^2\), the limit \(t \to 0\), and the definition~\eqref{eq:frechet} of the Fréchet derivative imply the coercivity \eqref{assump:derivativeMonotone} for some $\beta$ with \(\alpha \leq \beta\).

    \emph{(ii)}
    For $u, v, w \in \XX$, a Taylor expansion with integral remainder yields
\begin{equation*}
    \begin{aligned}
        \dual{\AA u - \AA v}{w}
        &=
        \int_0^1 \dual{\d\AA[v+\lambda(u-v)](u - v)}{w} \, \d \lambda \\
&         \le
        \int_0^1 \norm{\d\AA[v+\lambda(u-v)]}_{\mathcal{L}(\XX, \XX')}\, \d \lambda\, \enorm{u - v} \, \enorm{w}.
    \end{aligned}
\end{equation*}
    Since~\eqref{assump:derivativeLocallyLipschitz} implies local boundedness of \(\d\AA[\cdot]\), i.e., $\norm{\d\AA[w]}_{\mathcal{L}(\XX, \XX')} \leq \Mup[\vartheta]$ for all $w \in \XX$ with $\enorm{w} \leq \vartheta$,
    this shows local Lipschitz continuity~\eqref{assump:locallyLipschitz} of \(\AA\).
\end{remark}

\subsection{A posteriori linearization error estimate}
\label{section:aposteriori}

The local Lipschitz continuity~\eqref{assump:locallyLipschitz} of \(\AA\)
and the Lipschitz continuity~\eqref{eq:inverse_Lipschitz} of its inverse $\AA^{-1}$
ensure that the discrete residual \(\norm{F - \AA v_\coarse}_{\XX_\coarse'}\)
provides an a~posteriori error estimator for the linearization error
stemming from the inexact solution of the discrete nonlinear problem~\eqref{eq:weakformDiscrete}.
\begin{lemma}[a~posteriori linearization error estimate]
    \label{lemma:linearization_estimator}
    For any function $v_\coarse \in \XX_\coarse$ and any \(\vartheta > 0\)
    with \(\max\{\enorm{u_\coarse^\exact}, \enorm{u_\coarse^\exact - v_\coarse} \} \leq \vartheta\), there holds the equivalence
    \begin{equation}
        \label{eq:linearization_estimator}
        \alpha\, \enorm{u_\coarse^\exact - v_\coarse}
        \leq
        \norm{F - \AA v_\coarse}_{\XX_\coarse'}
        \coloneqq
        \sup_{\varphi_\coarse \in \XX_\coarse, \,\enorm{\varphi_\coarse} = 1}
        \dual{F - \AA v_\coarse}{\varphi_\coarse}
        \leq
        \Lup[\vartheta] \,
        \enorm{u_\coarse^\exact - v_\coarse}.
        \hfill \qed
    \end{equation}
\end{lemma}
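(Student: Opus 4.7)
The plan is to combine the Galerkin orthogonality of the discrete exact solution $u_\coarse^\exact$ with the two structural assumptions on $\AA$, namely strong monotonicity~\eqref{assump:stronglyMonotone} for the lower bound and local Lipschitz continuity~\eqref{assump:locallyLipschitz} for the upper bound. The key rewrite is that, by the defining property~\eqref{eq:weakformDiscrete} of $u_\coarse^\exact$, for every test function $\varphi_\coarse \in \XX_\coarse$ one has
\begin{equation*}
    \dual{F - \AA v_\coarse}{\varphi_\coarse}
    =
    \dual{\AA u_\coarse^\exact - \AA v_\coarse}{\varphi_\coarse}.
\end{equation*}
This identity turns the discrete dual norm of the residual into a quantity that can be controlled by operator differences evaluated at $u_\coarse^\exact$ and $v_\coarse$.

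For the \emph{upper bound}, I would take $\varphi_\coarse \in \XX_\coarse$ with $\enorm{\varphi_\coarse} = 1$ arbitrary, apply the above identity, and invoke~\eqref{assump:locallyLipschitz} with $v = u_\coarse^\exact$, $w = v_\coarse$, and $\varphi = \varphi_\coarse \in \XX \supseteq \XX_\coarse$. The hypothesis $\max\{\enorm{u_\coarse^\exact}, \enorm{u_\coarse^\exact - v_\coarse}\} \le \vartheta$ is precisely what is needed to use the local Lipschitz constant $\Lup[\vartheta]$, giving $\dual{F - \AA v_\coarse}{\varphi_\coarse} \le \Lup[\vartheta]\,\enorm{u_\coarse^\exact - v_\coarse}$. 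Taking the supremum over $\varphi_\coarse$ yields the right inequality in~\eqref{eq:linearization_estimator}.

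For the \emph{lower bound}, I would first dispose of the trivial case $u_\coarse^\exact = v_\coarse$ and otherwise test with the normalized discrete error $\varphi_\coarse \coloneqq (u_\coarse^\exact - v_\coarse)/\enorm{u_\coarse^\exact - v_\coarse} \in \XX_\coarse$, which is an admissible test function precisely because $v_\coarse, u_\coarse^\exact \in \XX_\coarse$. Applying~\eqref{assump:stronglyMonotone} to the resulting expression $\dual{\AA u_\coarse^\exact - \AA v_\coarse}{u_\coarse^\exact - v_\coarse}$ and dividing by $\enorm{u_\coarse^\exact - v_\coarse}$ delivers $\alpha\,\enorm{u_\coarse^\exact - v_\coarse} \le \dual{F - \AA v_\coarse}{\varphi_\coarse} \le \norm{F - \AA v_\coarse}_{\XX_\coarse'}$.

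There is no real obstacle in this proof; it is a direct consequence of~\eqref{assump:stronglyMonotone}--\eqref{assump:locallyLipschitz} and the Galerkin property of $u_\coarse^\exact$. The only subtle point worth emphasizing is that the discrete dual norm $\Vert\cdot\Vert_{\XX_\coarse'}$, rather than $\Vert\cdot\Vert_{\XX'}$, is exactly what makes the lower bound computable and sharp: the supremum is taken over test functions in $\XX_\coarse$, and the error $u_\coarse^\exact - v_\coarse$ itself lies in $\XX_\coarse$, so no loss occurs when restricting the test space.
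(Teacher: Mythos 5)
Your proof is correct and is precisely the argument the paper leaves implicit (the lemma is stated with a terminal \(\square\)). Both bounds follow, as you say, from the Galerkin identity \(\dual{F - \AA v_\coarse}{\varphi_\coarse} = \dual{\AA u_\coarse^\exact - \AA v_\coarse}{\varphi_\coarse}\) for \(\varphi_\coarse \in \XX_\coarse\), combined with \eqref{assump:locallyLipschitz} for the upper bound (the hypothesis on \(\vartheta\) matches exactly) and with \eqref{assump:stronglyMonotone} tested against the normalized discrete error for the lower bound; in particular, you correctly observe that one needs the \emph{discrete} analogue of \eqref{eq:inverse_Lipschitz} over \(\XX_\coarse'\) rather than the stated continuous version over \(\XX'\), and that this holds without loss because \(u_\coarse^\exact - v_\coarse \in \XX_\coarse\).
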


The discrete residual is not only crucial as a measure for the linearization error
in the convergence analysis of the Newton iteration.
The fact that the dual norm on a finite-dimensional subspace $\XX_\coarse$ is computable
at the expense of solving a linear equation motivates its use as a practical error estimator in iterative algorithms.
Recall that the Riesz representative \(r_\coarse \in \XX_\coarse\) determined by the linear problem
\begin{equation}
    \label{eq:residual_Riesz}
    \edual{r_\coarse}{w_\coarse}
    =
    \dual{F - \AA v_\coarse}{w_\coarse}
    \quad \text{ for all } w_\coarse \in \XX_\coarse
\end{equation}
satisfies the equality $\enorm{r_\coarse}=\norm{F - \AA v_\coarse}_{\XX_\coarse'}$.

\subsection{Global convergence analysis}
\label{subsection:Newton_convergence}

The convergence analysis of the Newton scheme employs the notion of level sets
basing on the initial iterate \(u_\coarse^0 \in \XX_\coarse\)
\begin{equation}
    \label{eq:levelset}
    \LL_\coarse[u_\coarse^0]
    \coloneqq
    \set[\big]{v_\coarse \in \XX_\coarse :
    \norm{F - \AA v_\coarse}_{\XX_\coarse'} < \norm{F - \AA u_\coarse^0}_{\XX_\coarse'}}.
\end{equation}
The linearization error equivalence~\eqref{eq:linearization_estimator} leads to uniform boundedness of \(\LL_\coarse[u_\coarse^0]\) as a next result.
\begin{lemma}[boundedness of level set]
    \label{lemma:level_set_boundedness}
    Suppose that the initial iterate \(u_\coarse^0 \in \XX_\coarse\) is bounded
    by \(\enorm{u_\coarse^0} \le c_0 \Bup_0 = c_0\norm{F - \AA 0}_{\XX'}\) for some constant \(c_0 > 1\) and $\Bup_0$ from~\eqref{eq:exact:bounded}. Then, the assumptions~\eqref{assump:stronglyMonotone}--\eqref{assump:locallyLipschitz} ensure that
    \begin{equation}
        \label{eq:Newton:uniformBoundResidual}
            \norm{F - \AA v_\coarse}_{\XX_\coarse'}
            \leq \,
            \Lup[(1 + c_0) \Bup_0] \, \enorm{u_\coarse^\exact - u_\coarse^0}
            \\
            \leq
            \Lup[(1 + c_0) \Bup_0] \, (1 + c_0) \Bup_0
                    \text{ for all } v_\coarse \in \LL_\coarse[u_\coarse^0]
    \end{equation}
    and, thus,
    \begin{equation}
        \label{eq:level_set_boundedness}
        \!\enorm{v_\coarse}
        \leq \!
            \frac{
        \norm{F - \AA v_\coarse}_{\XX_\coarse'}}{\alpha}
        +
        \enorm{u_\coarse^\exact}
        \leq
        \frac{\Lup[(1 + c_0) \Bup_0] \, (1 + c_0) \Bup_0}{\alpha} + \Bup_0
        \eqqcolon
        \overline\Bup_0
        \text{ for all } v_\coarse \in \LL_\coarse[u_\coarse^0].
    \end{equation}
\end{lemma}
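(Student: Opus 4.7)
The plan is to prove the two chained inequalities in turn, using the Galerkin property \eqref{eq:weakformDiscrete} of \(u_\coarse^\exact\) together with the a~priori bound \eqref{eq:exact:bounded} and the local Lipschitz continuity~\eqref{assump:locallyLipschitz}.

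First I would bound the dual residual of the initial iterate. Since \(u_\coarse^\exact \in \XX_\coarse\) solves~\eqref{eq:weakformDiscrete}, the Galerkin relation \(\dual{\AA u_\coarse^\exact}{\varphi_\coarse} = \dual{F}{\varphi_\coarse}\) for all \(\varphi_\coarse \in \XX_\coarse\) gives the identity
\[
\norm{F - \AA u_\coarse^0}_{\XX_\coarse'}
=
\sup_{\varphi_\coarse \in \XX_\coarse,\,\enorm{\varphi_\coarse}=1} \dual{\AA u_\coarse^\exact - \AA u_\coarse^0}{\varphi_\coarse}.
\]
To apply~\eqref{assump:locallyLipschitz} with \(v = u_\coarse^\exact\) and \(w = u_\coarse^0\), I would check the admissible threshold: the uniform bound~\eqref{eq:exact:bounded} and the triangle inequality yield
\[
\enorm{u_\coarse^\exact} \le \Bup_0 \le (1+c_0)\Bup_0, \qquad
\enorm{u_\coarse^\exact - u_\coarse^0} \le \enorm{u_\coarse^\exact} + \enorm{u_\coarse^0} \le (1+c_0)\Bup_0,
\]
which allows the choice \(\vartheta = (1+c_0)\Bup_0\). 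By definition of the level set, any \(v_\coarse \in \LL_\coarse[u_\coarse^0]\) satisfies \(\norm{F - \AA v_\coarse}_{\XX_\coarse'} < \norm{F - \AA u_\coarse^0}_{\XX_\coarse'}\), and combining with the previous estimate delivers~\eqref{eq:Newton:uniformBoundResidual}.

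Second, I would establish~\eqref{eq:level_set_boundedness} by producing a discrete analogue of the inverse Lipschitz bound~\eqref{eq:inverse_Lipschitz}. Testing the strong monotonicity~\eqref{assump:stronglyMonotone} with \(v = u_\coarse^\exact\) and \(w = v_\coarse\) and again exploiting the Galerkin orthogonality (note that \(u_\coarse^\exact - v_\coarse \in \XX_\coarse\) is an admissible test function) yields
\[
\alpha\,\enorm{u_\coarse^\exact - v_\coarse}^2
\le \dual{\AA u_\coarse^\exact - \AA v_\coarse}{u_\coarse^\exact - v_\coarse}
= \dual{F - \AA v_\coarse}{u_\coarse^\exact - v_\coarse}
\le \norm{F - \AA v_\coarse}_{\XX_\coarse'}\,\enorm{u_\coarse^\exact - v_\coarse}.
\]
Division by \(\enorm{u_\coarse^\exact - v_\coarse}\) (or handling the trivial case separately) gives the lower bound in \eqref{eq:linearization_estimator}. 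A triangle inequality \(\enorm{v_\coarse} \le \enorm{u_\coarse^\exact} + \enorm{u_\coarse^\exact - v_\coarse}\) together with~\eqref{eq:exact:bounded} and the dual residual bound~\eqref{eq:Newton:uniformBoundResidual} closes the argument.

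There is no single hard step; the main care is bookkeeping. In particular, one must choose the threshold \(\vartheta\) in~\eqref{assump:locallyLipschitz} \emph{before} knowing anything about \(v_\coarse\) beyond membership in the level set, which is why the residual bound is derived from the initial iterate \(u_\coarse^0\) rather than directly from \(v_\coarse\); otherwise the local Lipschitz constant would a priori depend on \(\enorm{v_\coarse}\), which is the very quantity we are trying to bound. The monotonicity of \(\Lup[\cdot]\) noted after~\eqref{assump:derivativeLocallyLipschitz} (applied here to \(\AA\) via Remark~\ref{remark:assumptions}) makes this choice consistent.
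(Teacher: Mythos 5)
Your proof is correct and takes essentially the same route as the paper's: it bounds $\norm{F - \AA u_\coarse^0}_{\XX_\coarse'}$ via local Lipschitz continuity with the threshold $\vartheta = (1+c_0)\Bup_0$, invokes the level-set definition to transfer the bound to $v_\coarse$, and then uses the strong-monotonicity lower bound $\alpha\enorm{u_\coarse^\exact - v_\coarse} \le \norm{F - \AA v_\coarse}_{\XX_\coarse'}$ together with a triangle inequality. The only cosmetic difference is that you re-derive both inequalities of Lemma~\ref{lemma:linearization_estimator} from the Galerkin relation, monotonicity, and local Lipschitz continuity, whereas the paper simply cites~\eqref{eq:linearization_estimator}.
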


\begin{proof}
    For \(v_\coarse \in \LL[u_\coarse^0]\),
    the bounds \(\enorm{u_\coarse^0} \le c_0 \Bup_0\) and~\eqref{eq:exact:bounded}
    show that $\max\{ \enorm{u_\coarse^\exact-u_\coarse^0}, \enorm{u_\coarse^\exact}\} \le (1 + c_0) \Bup_0$ and
    the upper bound in~\eqref{eq:linearization_estimator} verifies estimate~\eqref{eq:Newton:uniformBoundResidual} by
    \begin{equation*}
            \norm{F - \AA v_\coarse}_{\XX_\coarse'}
            \eqreff*{eq:levelset}<
            \, \norm{F - \AA u_\coarse^0}_{\XX_\coarse'}
            \, \eqreff*{eq:linearization_estimator}\leq \,
            \Lup[(1 + c_0) \Bup_0] \, \enorm{u_\coarse^\exact - u_\coarse^0}
            \leq
            \Lup[(1 + c_0) \Bup_0] \, (1 + c_0) \Bup_0.
    \end{equation*}
    The residual bounds the norm of any $v_\coarse \in \XX_\coarse$
    using the estimate~\eqref{eq:linearization_estimator} in
    \begin{equation*}
        \label{eq:Newton:boundIterates}
        \enorm{v_\coarse}
        \leq
        \enorm{u_\coarse^\exact - v_\coarse}
        +
        \enorm{u_\coarse^\exact}
        \eqreff{eq:linearization_estimator}\leq
        \alpha^{-1} \,
        \norm{F - \AA v_\coarse}_{\XX_\coarse'}
        +
        \enorm{u_\coarse^\exact}.
    \end{equation*}
    The combination of last two formulas
    concludes the proof of~\eqref{eq:level_set_boundedness}.
\end{proof}

The global convergence of the Newton iteration bases on
the contraction of the residual for sufficiently small damping parameters \(\delta\)
as specified in the following theorem.
\begin{theorem}[global convergence]
    \label{theorem:newton}
    Suppose  the assumptions~\eqref{assump:stronglyMonotone}--\eqref{assump:derivativeLocallyLipschitz}
    and that the initial iterate \(u_\coarse^0 \in \XX_\coarse\) is bounded
    by \(\enorm{u_\coarse^0} \le c_0 \Bup_0 = c_0\norm{F - \AA 0}_{\XX'}\) for some constant \(c_0 > 1\).
    For the constants \(\overline\Bup_0 >0\) from~\eqref{eq:level_set_boundedness}
    and \(\Mup[\,\cdot\,]>0\) from~\eqref{assump:derivativeLocallyLipschitz},
    abbreviate
    \begin{equation}
        \label{eq:uniform_bound:C_0}
        \textup{C}_0
        \coloneqq
        \frac{\Mup[\overline\Bup_0] \,\Lup[(1 + c_0) \Bup_0]\, (1 + c_0) \Bup_0}{2\beta^2} \le \frac{\alpha \, \Mup[\overline\Bup_0] \,\overline\Bup_0}{2\beta^2}.
    \end{equation}
    For all $0< \delta \le 1$
    and $v_\coarse\in \LL_\coarse[u_\coarse^0]$,
    the following estimate holds
    \begin{equation}
        \label{eq:levelsetContraction}
        \norm{F - \AA(\Phi(\delta; v_\coarse))}_{\XX_\coarse'}
        <
        \big[ 1 - \delta +\textup{C}_0 \, \delta^2\big]
        \,\norm{F - \AA v_\coarse}_{\XX_\coarse'}.
    \end{equation}
    The choice of the optimal damping parameter
    \(\delta = \delta_{\textup{opt}} \coloneqq \min\{ (2 \textup{C}_0)^{-1}, 1\}\)
    for the quadratic function \(r \colon (0, 1] \to \R\) with \(r[\delta] \coloneqq 1 - \delta + \textup{C}_0 \, \delta^2\)
    ensures the optimal reduction constant $0 < r_\textup{opt}[\delta_\textup{opt}]< 1$ such that there holds
    \begin{equation}
        \norm{F - \AA(\Phi(\delta; v_\coarse))}_{\XX_\coarse'}
        <
        r_\textup{opt}[\delta_{\textup{opt}}]
         \,
        \norm{F - \AA v_\coarse}_{\XX_\coarse'}
         \text{ with } r_\textup{opt}[\delta_\textup{opt}]
        =
        \begin{cases}
            1- \frac{\delta_{\textup{opt}}}{2}  \mkern-12mu &\text{if } \delta_{\textup{opt}} < 1,
            \\ \textup{C}_0 &\text{else}.
        \end{cases}
    \end{equation}
    Moreover, any \(0 < \delta \leq \delta_{\textup{opt}}\) leads to a reduction factor $r[\delta]$ satisfying
    \(1 - \delta_{\textup{opt}} / 2 \leq r[\delta] < 1\) such that
    the Newton iterate $\Phi(\delta; v_\coarse) \in \LL[u_\coarse^0]$ remains within the same level set.
\end{theorem}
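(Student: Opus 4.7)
The plan is to combine a Taylor expansion of $\AA$ about $v_\coarse$ with the Newton equation~\eqref{eq:Newton_update} to isolate a cancelling linear term and a quadratic remainder. For $\varphi_\coarse \in \XX_\coarse$ of unit energy norm, I would write
\[
\dual{F - \AA(v_\coarse + \delta\varrho_\coarse)}{\varphi_\coarse}
= (1-\delta)\dual{F - \AA v_\coarse}{\varphi_\coarse}
- \int_0^1 \dual{\big(\d\AA[v_\coarse + t\delta\varrho_\coarse] - \d\AA[v_\coarse]\big)(\delta\varrho_\coarse)}{\varphi_\coarse}\,\d t.
\]
The first summand uses~\eqref{eq:Newton_update} to reduce $F - \AA v_\coarse - \delta\,\d\AA[v_\coarse]\varrho_\coarse$ to $(1-\delta)(F-\AA v_\coarse)$ on $\XX_\coarse$, while the integral form of the remainder comes from the fundamental theorem of calculus applied to $s \mapsto \AA(v_\coarse + s\delta\varrho_\coarse)$. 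Taking the supremum over $\varphi_\coarse$ reduces the task to bounding the integral in $\XX_\coarse'$ by $\textup{C}_0\delta^2\norm{F-\AA v_\coarse}_{\XX_\coarse'}$.

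The second step applies~\eqref{assump:derivativeLocallyLipschitz} to the integrand, and the technical heart of the argument is to ensure that the Lipschitz constant $\Mup[\overline\Bup_0]$ embedded in $\textup{C}_0$ is admissible. Since $v_\coarse \in \LL_\coarse[u_\coarse^0]$, Lemma~\ref{lemma:level_set_boundedness} yields $\enorm{v_\coarse} \le \overline\Bup_0$. For the Newton update, coercivity~\eqref{eq:Newton:boundedInverse} combined with~\eqref{eq:Newton_update} gives $\enorm{\varrho_\coarse} \le \beta^{-1}\norm{F-\AA v_\coarse}_{\XX_\coarse'}$, and~\eqref{eq:Newton:uniformBoundResidual} yields $\enorm{\varrho_\coarse} \le \beta^{-1}\Lup[(1+c_0)\Bup_0](1+c_0)\Bup_0$; invoking $\alpha \le \beta$ from Remark~\ref{remark:assumptions}(i) together with the explicit expression for $\overline\Bup_0$ in~\eqref{eq:level_set_boundedness} then shows $\enorm{t\delta\varrho_\coarse} \le \enorm{\varrho_\coarse} \le \overline\Bup_0$ for all $t,\delta \in [0,1]$. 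Consequently,~\eqref{assump:derivativeLocallyLipschitz} applied with arguments $v_\coarse$ and $v_\coarse + t\delta\varrho_\coarse$ (so that $\max\{\enorm{v_\coarse},\enorm{t\delta\varrho_\coarse}\} \le \overline\Bup_0$) yields $\norm{\d\AA[v_\coarse + t\delta\varrho_\coarse] - \d\AA[v_\coarse]}_{\LL(\XX,\XX')} \le \Mup[\overline\Bup_0]\,t\delta\enorm{\varrho_\coarse}$. Integration bounds the remainder by $\tfrac{1}{2}\Mup[\overline\Bup_0]\delta^2\enorm{\varrho_\coarse}^2 \le \tfrac{1}{2\beta^2}\Mup[\overline\Bup_0]\delta^2\norm{F-\AA v_\coarse}_{\XX_\coarse'}^2$, and converting one factor via the (strict) level-set bound $\norm{F-\AA v_\coarse}_{\XX_\coarse'} < \Lup[(1+c_0)\Bup_0](1+c_0)\Bup_0$ from~\eqref{eq:Newton:uniformBoundResidual} produces exactly $\textup{C}_0\,\delta^2\norm{F-\AA v_\coarse}_{\XX_\coarse'}$ and yields~\eqref{eq:levelsetContraction} with the strict inequality inherited from the level set.

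For the optimal damping, I would minimize the upward-opening parabola $r[\delta] = 1 - \delta + \textup{C}_0\delta^2$ on $(0,1]$. Its unconstrained minimizer is $\delta^\star = (2\textup{C}_0)^{-1}$, so the constrained minimizer is $\delta_\textup{opt} = \min\{\delta^\star,1\}$ with optimal value $r_\textup{opt} = 1 - \delta_\textup{opt}/2$ when $\delta^\star < 1$ and $r_\textup{opt} = r[1] = \textup{C}_0$ otherwise; both cases satisfy $0 < r_\textup{opt} < 1$. Monotonicity of $r$ on $(0,\delta^\star]$ then implies $r[\delta] \ge r[\delta_\textup{opt}]$ and $r[\delta] < 1$ for every $\delta \in (0,\delta_\textup{opt}]$, and the contraction $\norm{F-\AA\Phi(\delta;v_\coarse)}_{\XX_\coarse'} < \norm{F-\AA v_\coarse}_{\XX_\coarse'} < \norm{F-\AA u_\coarse^0}_{\XX_\coarse'}$ proves $\Phi(\delta;v_\coarse) \in \LL_\coarse[u_\coarse^0]$. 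The main obstacle I expect is the bookkeeping around the a~priori bound $\enorm{\varrho_\coarse}\le \overline\Bup_0$, which is indispensable for invoking~\eqref{assump:derivativeLocallyLipschitz} with precisely $\Mup[\overline\Bup_0]$; this is the only point where the local (as opposed to global) Lipschitz nature of $\d\AA$ forces a careful argument, and it crucially relies on the chain $\alpha \le \beta$ from Remark~\ref{remark:assumptions}(i).
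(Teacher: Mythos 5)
Your proof is correct and follows essentially the same route as the paper: you apply the fundamental theorem of calculus to $s \mapsto \AA(v_\coarse + s\delta\varrho_\coarse)$, use the Newton equation~\eqref{eq:Newton_update} to split off the $(1-\delta)$-term, bound the remainder with the local Lipschitz property~\eqref{assump:derivativeLocallyLipschitz} after first verifying $\enorm{\varrho_\coarse} \leq \overline\Bup_0$ via~\eqref{eq:Newton:residualDominatesDifference}, \eqref{eq:Newton:uniformBoundResidual}, and $\alpha \leq \beta$, and finally invoke the strict level-set bound to turn the quadratic residual estimate into~\eqref{eq:levelsetContraction}. The only cosmetic difference is that you integrate over $t \in [0,1]$ with the substitution $\tau = t\delta$ instead of $\tau \in [0,\delta]$ as in the paper; the optimization of the quadratic $r[\delta]$ is handled identically.
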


\begin{proof}
    The proof follows the lines of~\cite[Theorem~2.12]{d2004} and consists of three steps.

    \emph{Step~1.}
    The boundedness~\eqref{eq:Newton:boundedInverse} of the inverse of \(\d\AA\) and the Newton update~\eqref{eq:Newton_update} imply
    \begin{equation}
        \label{eq:Newton:residualDominatesDifference}
        \beta\,\enorm{\varrho_\coarse}
        \, \eqreff*{eq:Newton:boundedInverse}\leq \,
         \,
        \norm{\d{\AA}[v_\coarse] \varrho_\coarse}_{\XX_\coarse'}
        \eqreff*
        {eq:Newton_update}=
        \norm{F - \AA v_\coarse}_{\XX_\coarse'}.
    \end{equation}
    The combination of this with estimate~\eqref{eq:Newton:uniformBoundResidual}
    and the bound \(\alpha \leq \beta\) from Remark~\ref{remark:assumptions}{\rm(i)} yields
    \[
        \alpha \, \enorm{\varrho_\coarse}
        \eqreff{eq:Newton:uniformBoundResidual}\le
         \Lup[(1 + c_0) \Bup_0] \, (1 + c_0) \Bup_0
         \eqreff{eq:level_set_boundedness}\le
        \alpha \, \overline\Bup_0.
    \]
    For any \(0 < \tau \leq \delta \leq 1\),
    this and Lemma~\ref{lemma:level_set_boundedness} ensure that \(\enorm{v_\coarse}\) and
    \(\tau \, \enorm{\varrho_\coarse} \leq \enorm{\varrho_\coarse}\)
    are bounded by \(\overline\Bup_0\).
    Hence,~\eqref{assump:derivativeLocallyLipschitz}
    with $v = \Phi(\tau; v_\coarse) =v_\coarse + \tau \varrho_\coarse$ and $w = v_\coarse$ gives
    \begin{equation}
        \label{eq:Newton:residualDominatesDifferenceTau}
        \begin{aligned}
            \norm{(\d{\AA}[v_\coarse + \tau \varrho_\coarse] - \d{\AA}[v_\coarse]) \varrho_\coarse}_{\XX_\coarse'}
            &\le
            \norm{\d{\AA}[\Phi(\tau; v_\coarse)] - \d{\AA}[v_\coarse]}_{\mathcal{L}(\XX, \XX')} \, \enorm{\varrho_\coarse}
            \\
            &\eqreff*{assump:derivativeLocallyLipschitz}\leq\; \Mup[\overline\Bup_0] \, \enorm{\Phi(\tau; v_\coarse) - v_\coarse} \, \enorm{\varrho_\coarse}
            \eqreff{eq:update:structure}=
            \Mup[\overline\Bup_0]  \, \tau \, \enorm{\varrho_\coarse}^2.
        \end{aligned}
    \end{equation}

    \emph{Step~2.}
    Let $w_\coarse \in \XX_\coarse$. The fundamental theorem of calculus for the operator \(\AA\) yields
    \begin{equation*}
    \begin{aligned}
        \dual{F - \AA(v_\coarse + \delta \varrho_\coarse)}{w_\coarse}
        &=
        \dual{F - \AA v_\coarse  - \int_0^\delta \d{\AA}[v_\coarse + \tau \varrho_\coarse] \varrho_\coarse \d{\tau}}{w_\coarse}
        \\
        & \mkern-100mu=
        (1 - \delta)\, \dual{F - \AA v_\coarse}{w_\coarse} +
\delta\, \dual{F - \AA v_\coarse}{w_\coarse} - \dual{\int_0^\delta \d{\AA}[v_\coarse + \tau \varrho_\coarse] \varrho_\coarse \d{\tau}}{w_\coarse}
            \\
        &\mkern-100mu \eqreff*{eq:Newton_update}=
        (1 - \delta)\, \dual{F - \AA v_\coarse}{w_\coarse}
- \dual{\int_0^\delta (\d{\AA}[v_\coarse + \tau \varrho_\coarse] - \d{\AA}[v_\coarse]) \varrho_\coarse \d{\tau}}{w_\coarse}
    \end{aligned}
\end{equation*}
    Taking the supremum over $w_\coarse \in \XX_\coarse$ with $\enorm{w_\coarse}=1$ and the monotonicity of the integral prove
    \begin{equation}\label{eq:newtonTheorem}
        \begin{aligned}
        \norm{F -& \AA(v_\coarse +\delta \varrho_\coarse)}_{\XX_\coarse'}
        \\
        &\leq
(1-\delta) \, \norm{F - \AA v_\coarse}_{\XX_\coarse'} +         \int_0^\delta \norm{(\d{\AA}[v_\coarse + \tau \varrho_\coarse] - \d{\AA}[v_\coarse] ) \varrho_\coarse}_{\XX_\coarse'} \d{\tau}.
    \end{aligned}
\end{equation}
    The local Lipschitz continuity of the derivative in the form of~\eqref{eq:Newton:residualDominatesDifferenceTau}
    (using~\eqref{assump:derivativeLocallyLipschitz})
    and the bound~\eqref{eq:Newton:residualDominatesDifference} of the Newton update
    (using~\eqref{assump:derivativeMonotone})
    result in the key estimate
    \begin{equation}
        \label{eq:quadraticEstimate}
        \begin{split}
            \norm{F - \AA(v_\coarse + \delta \varrho_\coarse)}_{\XX_\coarse'}
            &\eqreff*{eq:Newton:residualDominatesDifferenceTau}\le \,
            (1-\delta) \,\norm{F - \AA v_\coarse}_{\XX_\coarse'} +
            \Mup[\overline\Bup_0] \,\enorm{\varrho_\coarse}^2 \,
            \int_0^\delta \tau \d{\tau}
            \\
            &\eqreff*{eq:Newton:residualDominatesDifference}\leq
            \bigg[
                1- \delta +
                \frac{\Mup[\overline\Bup_0]}{2\, \beta^2} \,
                \norm{F - \AA v_\coarse}_{\XX_\coarse'}\,\delta^2
            \bigg]
            \, \norm{F - \AA v_\coarse}_{\XX_\coarse'}.
        \end{split}
    \end{equation}

    \emph{Step~3.}
    The estimates~\eqref{eq:Newton:uniformBoundResidual} and~\eqref{eq:quadraticEstimate} show that
    any $0 < \delta \le 1$ and $\textup{C}_0 >0$ from~\eqref{eq:uniform_bound:C_0} satisfy
    \begin{equation*}
        \norm{F - \AA(v_\coarse + \delta \varrho_\coarse)}_{\XX_\coarse'}
        <
        \big[1 - \delta + \textup{C}_0 \,\delta^2\big]
        \,\norm{F - \AA v_\coarse}_{\XX_\coarse'}.
    \end{equation*}
    Elementary calculus for the quadratic function \(r \colon (0, 1] \to \R\)
    with \(r[\delta] = 1 - \delta + \textup{C}_0 \, \delta^2\)
    verifies the remaining assertions and concludes the proof.
\end{proof}

\subsection{Adaptive damping strategy}
\label{section:automatic_damping}

The optimal damping parameter $\delta_{\textup{opt}}$ from Theorem~\ref{theorem:newton} is unknown in general.
However, the following automatic selection of \(\delta\) ensures uniform contraction of
the Newton iteration on \emph{any} fixed subspace \(\XX_\coarse \subseteq \XX\).
\begin{algorithm}[adaptively damped Newton iteration]\label{algorithm:ADN}
    ~\smallskip

    \noindent
    {\textup{\textbf{Input:}}}
    Initial guess $u_\coarse^0 \in \XX_\coarse$ and $\delta_{\textup{min}} \coloneqq 1/2$.
    \smallskip

    \noindent\textup{\textbf{for}} \; $k = 0, 1,\ldots$ \; \textup{\textbf{repeat}} \;
    the steps {\textup{\ref{algorithm:ADN:linearizationFirst}}--\textup{\ref{algorithm:ADN:termination}}}: \smallskip
    \begin{enumerate}[label=\textup{(\alph*)}]
        \item\label{algorithm:ADN:linearizationFirst}
            \emph{(Newton update)}
            Compute $\varrho_\coarse^k \in \XX_\coarse$ by solving the linear equation
            \begin{equation}
                \label{algorithm:ADN:NewtonUpdate}%
                \dual{\d{\AA}[u_\coarse^{k}]\varrho_\coarse^{k}}{v_\coarse}
                =
                \dual{F-\AA u_\coarse^{k}}{v_\coarse}
                \quad\text{ for all } v_\coarse \in \XX_\coarse.
            \end{equation}
        \item
            \label{algorithm:ADN:adaptiveDampingProcedure}
            \emph{(determine damping parameter)} Initialize $\delta \coloneqq 1$. Check if
            \begin{equation}
                \label{algorithm:ADN:dampingCriterion}
                \norm{F - \AA(u_\coarse^{k} + \delta \varrho_\coarse^{k})}_{\XX_{\coarse}'}
                \leq
                (1-\delta_{\textup{min}}^{3/2} / 2) \,
                \norm{F - \AA u_\coarse^{k}}_{\XX_{\coarse}'}.
            \end{equation}
            \smallskip
            \noindent\textup{\textbf{repeat}}
            $\delta \coloneqq \delta/2$ and update the overall minimum
            $\delta_{\textup{min}} \coloneqq \min\{\delta, \delta_{\textup{min}}\}$ \textup{\textbf{until}~\eqref{algorithm:ADN:dampingCriterion} is met.}
        \item\label{algorithm:ADN:termination}
            \emph{(update iterate)}
            Set $\delta_\coarse^k \coloneqq \delta$
            and $u_\coarse^{k+1} \coloneqq u_\coarse^k + \delta_\coarse^k \, \varrho_\coarse^k$.
            \smallskip
    \end{enumerate}
    {\textup{\textbf{Output:}}} Sequence of iterates $u_\coarse^k$, minimal damping parameter \(\delta_{\textup{min}}\).
\end{algorithm}

The damping condition~\eqref{algorithm:ADN:dampingCriterion} is motivated by the observation that
the reduction factor \(r[\delta] = 1-\delta + \textup{C}_0\, \delta^2\) in Theorem~\ref{theorem:newton}
satisfies \(r[\delta] \in [1 - \delta_{\textup{opt}}/2, 1)\) for all \(\delta \in (0, \delta_{\textup{opt}}]\).
Due to the dominating linear term in \(r[\,\cdot\,]\),
the reduction factor \(\rr\colon (0,1] \to \R\) with \(\rr[\delta] = 1 - \delta^{3/2} / 2\)
in~\eqref{algorithm:ADN:dampingCriterion} converges faster to one than \(r[\,\cdot\,]\).
Thus, the successive reduction of \(\delta_{\textup{min}}\) allows us to ultimately find a damping parameter \(\delta_{\textup{min}} \in (0, \delta_{\textup{opt}}]\) such that the reduction factor \(r[\delta_{\textup{min}}] \in [1 - \delta_{\textup{opt}}/2, 1)\) is guaranteed.
As long as \(\delta_{\textup{min}}\) is uniformly bounded from below,
the contraction in each Newton iteration is uniformly bounded away from $1$.

For the practical performance, it is crucial that the reduction condition~\eqref{algorithm:ADN:dampingCriterion}
is rather mild and will only pose an essential restriction during the non-quadratic convergence phase with linearization error
\[
    0 \ll \enorm{u_\coarse^\exact-u_\coarse^k} \eqsim \norm{F - \AA u_\coarse^k}_{\XX_{\coarse}'}.
\]
If $u_\coarse^k$ is sufficiently close to $u_\coarse^\exact$ allowing for $\delta = 1$ and $\Mup[\overline\Bup_0]\, \beta^{-2} \, 2^{-1} \, \norm{F - \AA v_\coarse}_{\XX_\coarse'}  < 1$,
the contraction factor \(r[\delta_{\textup{min}}] = 1 - \delta_{\textup{min}}^{3/2} / 2\)
will be compensated by the additional residual factor in the estimate~\eqref{eq:quadraticEstimate}
of the proof of Theorem~\ref{theorem:newton}, yielding that
\[
    \norm{F - \AA(u_\coarse^{k} + \delta \varrho_\coarse^{k})}_{\XX_{\coarse}'}
    \lesssim
    \norm{F - \AA u_\coarse^{k}}_{\XX_{\coarse}'}^2.
\]
Since \(\delta\) is reset to one in each iteration,
the algorithm will allow for quadratic convergence with the classical Newton iteration,
but also detects when the quadratic convergence is lost and switches to the non-quadratic damped iteration.

The well-definedness of Algorithm~\ref{algorithm:ADN} is guaranteed by the following lemma.
\begin{lemma}\label{lemma:adaptiveDamping}
    Under the assumptions of Theorem~\ref{theorem:newton},
    the output sequence \((u_\coarse^k)_{k \in \N_0}\) and \(\delta_{\textup{min}} > 0\)
    of Algorithm~\ref{algorithm:ADN} satisfy the following properties:
    \begin{description}
        \item[\bfseries (i) Bounded iterates]
            For all \(k \in \N_0\) and $\overline\Bup_0>0$ from~\eqref{eq:level_set_boundedness}, there holds that \(\enorm{u_\coarse^k} \leq \overline\Bup_0\).
        \item[\bfseries (ii) Guaranteed reduction]
            There exists a uniform lower bound \(0 < \delta_0 \leq \delta_{\textup{min}}\) such that
            \(\rr_0 \coloneqq \rr[\delta_0] = 1 - \delta_0^{3/2}/2 < 1\) satisfies
            \begin{equation}
                \label{eq:damping:guaranteedContraction}
                \norm{F - \AA u_\coarse^{k+1}}_{\XX_{\coarse}'}
                \leq
                \rr_0 \,
                \norm{F - \AA u_\coarse^{k}}_{\XX_{\coarse}'}
                \quad \text{ for all } k \in \N_0.
            \end{equation}
        \item[\bfseries (iii) Quadratic convergence]
            There exists a uniform $k_0 \in \N_0$ such that with \(\delta = 1\)
            \[
                \norm{F - \AA(u_\coarse^k + \varrho_\coarse^k)}_{\XX_\coarse'}
                \leq
                \min\Big\{
                    1 - 2^{-5/2}, \:
                    \frac{\Mup[\overline\Bup_0]}{2\beta^2}\,
                    \norm{F - \AA u_\coarse^k}_{\XX_\coarse'}
                \Big\}\,
                \norm{F - \AA u_\coarse^k}_{\XX_\coarse'}
                \text{ for all } k \ge k_0.
            \]
    \end{description}
    Moreover, the constants $\delta_0$, $\rr_0$, and $k_0$ are independent of the subspace \(\XX_\coarse\).
\end{lemma}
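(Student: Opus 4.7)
The plan is to prove (i)--(iii) in order, with the main effort concentrated in (ii) on establishing a uniform lower bound for $\delta_{\textup{min}}$.

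For (i), I induct on $k$. The base case uses $\enorm{u_\coarse^0} \le c_0 \Bup_0 \le \overline\Bup_0$, where the second inequality follows from the definition~\eqref{eq:level_set_boundedness} of $\overline\Bup_0$ together with $\Lup[\vartheta] \ge \alpha$ (a direct consequence of combining~\eqref{assump:stronglyMonotone} and~\eqref{assump:locallyLipschitz}, since $\alpha \enorm{v-w}^2 \le \dual{\AA v - \AA w}{v-w} \le \Lup[\vartheta] \enorm{v-w}^2$). For the inductive step, the accepted damping parameter satisfies the criterion~\eqref{algorithm:ADN:dampingCriterion} with reduction factor strictly below one, so $\norm{F - \AA u_\coarse^{k+1}}_{\XX_\coarse'} < \norm{F - \AA u_\coarse^0}_{\XX_\coarse'}$. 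Hence $u_\coarse^{k+1} \in \LL_\coarse[u_\coarse^0]$, and Lemma~\ref{lemma:level_set_boundedness} concludes $\enorm{u_\coarse^{k+1}} \le \overline\Bup_0$.

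The main obstacle lies in (ii): ensuring that the bisection over $\delta$ terminates at a uniformly positive value. By (i), the iterates lie in $\LL_\coarse[u_\coarse^0]$, and the key estimate~\eqref{eq:quadraticEstimate} applies verbatim at $u_\coarse^0$ as well, because $\enorm{u_\coarse^0} \le \overline\Bup_0$ and $\norm{F-\AA u_\coarse^0}_{\XX_\coarse'}$ obey the same bounds used in the proof of Theorem~\ref{theorem:newton}. Combining the Newton reduction $r[\delta] = 1 - \delta + \textup{C}_0 \delta^2$ with the criterion~\eqref{algorithm:ADN:dampingCriterion} and noting that $\delta_{\textup{min}} \le \delta$ after the first halving of the bisection (so $1 - \delta_{\textup{min}}^{3/2}/2 \ge 1 - \delta^{3/2}/2$), it suffices to establish
\[
    1 - \delta + \textup{C}_0 \, \delta^2 \le 1 - \delta^{3/2}/2,
    \quad\text{i.e.,}\quad
    \sqrt{\delta}/2 + \textup{C}_0 \, \delta \le 1.
\]
This holds for all $\delta \le \delta^* \coloneqq \min\{1, 1/(4\textup{C}_0)\}$, since then $\sqrt{\delta}/2 \le 1/2$ and $\textup{C}_0 \delta \le 1/4$. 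Bisection from $\delta = 1$ therefore terminates with accepted damping $\delta \ge \delta^*/2 \eqqcolon \delta_0 > 0$, yielding~\eqref{eq:damping:guaranteedContraction} with $\rr_0 = 1 - \delta_0^{3/2}/2 < 1$.

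For (iii), the R-linear decay from (ii) forces $\norm{F - \AA u_\coarse^k}_{\XX_\coarse'} \to 0$, so I choose $k_0 \in \N_0$ minimal such that $\norm{F - \AA u_\coarse^{k_0}}_{\XX_\coarse'} \le 2 \beta^2 \, \Mup[\overline\Bup_0]^{-1} \, (1 - 2^{-5/2})$. Then~\eqref{eq:quadraticEstimate} with $\delta = 1$ directly delivers the quadratic bound $\Mup[\overline\Bup_0]/(2\beta^2) \cdot \norm{F - \AA u_\coarse^k}_{\XX_\coarse'}$, and by the choice of $k_0$ this same factor is also $\le 1 - 2^{-5/2}$, proving both entries of the minimum. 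Since $\delta_{\textup{min}} \le 1/2$ always, the damping threshold $1 - \delta_{\textup{min}}^{3/2}/2 \ge 1 - 2^{-5/2}$ is met at once by $\delta = 1$, so no bisection occurs. A straightforward induction propagates the smallness of the residual to all $k \ge k_0$. Independence of $\XX_\coarse$ follows because $\textup{C}_0$, $\overline\Bup_0$, $\delta_0$, $\rr_0$ depend only on problem data, while the initial residual is uniformly bounded via $\norm{F - \AA u_\coarse^0}_{\XX_\coarse'} \le \norm{F - \AA u_\coarse^0}_{\XX'} \le \norm{F - \AA 0}_{\XX'} + \Lup[c_0 \Bup_0] \, c_0 \Bup_0$, leading to a uniform $k_0$.
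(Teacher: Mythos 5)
Your proposal is correct and follows the same three-step structure as the paper's proof: level-set membership plus Lemma~\ref{lemma:level_set_boundedness} for (i), the comparison $r[\delta]\le\rr[\delta]$ below a threshold for (ii), and the key estimate~\eqref{eq:quadraticEstimate} together with (ii)-driven decay for (iii). The only differences are cosmetic — your sufficient threshold $\delta^\ast = \min\{1, 1/(4\textup{C}_0)\}$ is coarser than the paper's exact root of $\sqrt{\delta}/2+\textup{C}_0\delta=1$, and you spell out the uniformity of $k_0$ in $\XX_\coarse$ via a uniform bound on $\norm{F-\AA u_\coarse^0}_{\XX_\coarse'}$, which the paper leaves implicit.
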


\begin{proof}
    \emph{Step~1 (bounded iterates).}\;
    It is an immediate consequence of the successive reduction~\eqref{algorithm:ADN:dampingCriterion}
    that each iterate \(u_\coarse^k\) belongs to the level set $\LL_\coarse[u_\coarse^0]$ from~\eqref{eq:levelset}.
    Thus, Lemma~\ref{lemma:level_set_boundedness} provides uniform boundedness of the output of Algorithm~\ref{algorithm:ADN}.

    \emph{Step~2 (guaranteed reduction).}\;
    Asymptotically, the reduction factor \(r[\delta] = 1 - \delta + \textup{C}_0 \delta^{2} \to 1\)
    increases linearly for \(\delta \to 0\)
    whereas \(\rr[\delta] = 1 - \delta^{3/2}/2\) from~\eqref{algorithm:ADN:dampingCriterion}
    converges faster to one.
    Precisely, it holds
    \begin{equation}\label{eq:sketchDelta}
        r[\delta]
        \leq
        \rr[\delta]
        \quad \text{for any }
        0
        <
        \delta
        \leq
        \delta^\star
        \coloneqq
        \min\bigg\{ \frac{\big[(16\textup{C}_0+1)^{1/2}-1\big]^2}{16 \, \textup{C}_0^2},\: 1 \bigg\},
    \end{equation}
    where we note that $\textup{C}_0$ is independent of the iteration index \(k\) and the discrete space $\XX_\coarse$.
    Hence, any \(\delta \leq \delta^\star\) with the contraction~\eqref{eq:levelsetContraction}
    fulfills the acceptance condition~\eqref{algorithm:ADN:dampingCriterion}.
    Consequently, there exists a minimal $j_0 \in \N_0$ such that $\delta = \delta_0 \coloneqq 1/2^{j_0} \leq \delta^\star$ will be accepted and the condition~\eqref{algorithm:ADN:dampingCriterion}
    is satisfied after at most $j_0$ iterations of the inner loop in step~\ref{algorithm:ADN:adaptiveDampingProcedure}
    of Algorithm~\ref{algorithm:ADN} with reduction factor \(\rr_0 = 1 - \delta_0^{3/2}/2\). In particular,
    \(\delta_{\textup{min}} \geq 1/2^{j_0} = \delta_0\) is uniformly bounded from below.
   
    \emph{Step~3 (quadratic convergence).}\;
    The successive application of the guaranteed reduction~\eqref{eq:damping:guaranteedContraction} yields convergence
    \(\norm{F - \AA u_\coarse^{k+1}}_{\XX_{\coarse}'} \to 0\) as \(k \to \infty\)
    and allows to choose a uniform index $k_0 \in \N_0$ such that
    \[
        \norm{F - \AA u_\coarse^k}_{\XX_{\coarse}'}
        \leq
        \frac{8\, \beta^2}{5\,\Mup[\overline\Bup_0]}
        \quad \text{ for all } k \ge k_0.
    \]
    For \(\delta = 1\) and \(k \ge k_0\), the key estimate~\eqref{eq:quadraticEstimate} yields
    \[
        \norm{F - \AA(u_\coarse^k + \varrho_\coarse^k)}_{\XX_\coarse'}
        \eqreff{eq:quadraticEstimate}\le
        \frac{\Mup[\overline\Bup_0]}{2\beta^2}\,
        \norm{F - \AA u_\coarse^k}_{\XX_\coarse'}^2
        \leq
        \frac{4}{5} \,
        \norm{F - \AA u_\coarse^k}_{\XX_\coarse'}.
    \]
    Since \(\delta_{\textup{min}} \leq 1/2\) implies \(4/5 <  1 - 2^{-5/2} \leq 1 - \delta_{\textup{min}}^{3/2}/2\),
    the previous displayed formula confirms that the parameter \(\delta = 1\) is always accepted for \(k \ge k_0\).
    Hence, estimate~\eqref{eq:quadraticEstimate} ensures the quadratic convergence and thus concludes the proof.
\end{proof}


\section{Adaptive mesh-refining algorithm}
\label{section:algorithm}\label{section:adaptive_mesh_refinement}

Throughout the remaining part of the paper,
we consider Hilbert spaces \(\XX\) associated with
a polyhedral bounded Lipschitz domain $\Omega \subset \R^d$ with $d \in \N$.
For a conforming triangulation \(\TT_\coarse\) of \(\Omega\) and a polynomial degree $p \in \N$,
choose the conforming finite element space
\[
    \XX_\coarse
    \coloneqq
    \mathcal{S}^p_0(\TT_\coarse)
    \coloneqq
    \set{v_\coarse \in H_0^1(\Omega)\given \forall T \in \TT_\coarse\colon v_\coarse|_T \text{ is a piecewise polynomial of degree} \le p}    \subset
    \XX.
\]
This section presents an adaptive mesh-refining algorithm for the solution of the nonlinear problem~\eqref{eq:weakform}
which iteratively solves the discrete problem~\eqref{eq:weakformDiscrete} by Algorithm~\ref{algorithm:ADN}.

\subsection{Mesh refinement}

Let \(\TT_0\) be an initial triangulation of \(\Omega\) into compact simplices.
The local mesh refinement employs a newest-vertex bisection (NVB) algorithm such as~\cite{s2008}
for $d \ge 2$ with admissible $\TT_0$ as well as~\cite{kpp2013} for \(d = 2\)
and~\cite{dgs2023} for $d\ge 2$ with non-admissible $\TT_0$. For $d=1$, we refer to~\cite{affkp2013}.
For each triangulation \(\TT_\coarse\) and marked elements \(\MM_\coarse \subseteq \TT_\coarse\),
let \(\TT_\fine \coloneqq \texttt{refine}(\TT_\coarse, \MM_\coarse)\) be the coarsest conforming refinement of \(\TT_\coarse\)
such that at least all elements \(T \in \MM_\coarse\) have been refined, i.e.,
\(\MM_\coarse  \subseteq \TT_\coarse \setminus \TT_\fine\).
We write \(\TT_\fine \in \T(\TT_\coarse)\) if \(\TT_\fine\) can be obtained from \(\TT_\coarse\) by finitely many steps of NVB,
and abbreviate \(\T \coloneqq \T(\TT_0)\).
Moreover, we observe that the nestedness of meshes \(\TT_\fine \in \T(\TT_\coarse)\)
induces nestedness of the corresponding discrete spaces \(\XX_\coarse \subseteq \XX_\fine\).

\subsection{Axioms of adaptivity and a~posteriori error estimator}\label{subsection:axioms}
For $\TT_\coarse \in \T$, $T \in \TT_\coarse$, and $v_\coarse \in \XX_\coarse$,
let~$\eta_\coarse(T, v_\coarse) \in \R_{\ge 0}$ be the local contributions of an a~posteriori error estimator. We abbreviate
\[
    \eta_\coarse(\UU_\coarse, v_\coarse)
    \coloneqq
    \Big(\! \! \sum_{T \in \,\UU_\coarse} \!\! \eta_\coarse(T, v_\coarse)^2 \Big)^{1/2}
    \text{for all } \UU_\coarse \subseteq \TT_\coarse \quad \text{ and } \quad \eta_\coarse(v_\coarse) \coloneqq \eta_\coarse(\TT_\coarse, v_\coarse).
\]

The axioms of adaptivity from~\cite{axioms} are key properties in the convergence analysis of adaptive mesh-refining algorithms typically driven by residual-based a~posteriori error estimators.
The analysis in Section~\ref{section:mainResults} employs a modification from~\cite[Proposition~15]{bbimp2022} extending the stability~\eqref{axiom:stability} to locally Lipschitz continuous problems.
For the abstract analysis, we suppose that the following properties hold for all $\TT_\coarse \in \T$ and $\TT_\fine \in
\T(\TT_\coarse)$ with corresponding exact discrete solutions $u_\coarse^\exact$ and $u_\fine^\exact$, respectively,
solving~\eqref{eq:weakformDiscrete}.
\begin{description}
    \item[(A1) stability]\labeltext{A1}{axiom:stability}
        For all $\vartheta > 0$ and all $\UU_\coarse \subseteq \TT_\fine \cap \TT_\coarse$,
        there exists $\Cstab[\vartheta] >0$ such that
        \begin{equation*}
        \begin{aligned}
            \big| \eta_\fine(\UU_\coarse, v_\fine) - \eta_\coarse(\UU_\coarse, v_\coarse) \big|
            &\le
            \Cstab[\vartheta]\, \enorm{v_\fine - v_\coarse}  \\
			&  \text{ for all }v_\fine \in \XX_\fine \text{ and all } v_\coarse \in \XX_\coarse \text{ with } \max\big\{\enorm{v_\fine}, \enorm{v_\fine-v_\coarse}\big\} \le \vartheta.
        \end{aligned}
    \end{equation*}
    \item[(A2) reduction]\labeltext{A2}{axiom:reduction}
        There exists $0 < \qred <1$ such that
        \begin{align*}
            \eta_\fine(\TT_\fine \backslash \TT_\coarse, v_\coarse)
            &\le
            \qred \, \eta_\coarse(\TT_\coarse \backslash \TT_\fine, v_\coarse) \quad \text{ for all } v_\coarse \in \XX_\coarse.
        \end{align*}
    \item[(A3) reliability]\labeltext{A3}{axiom:reliability}
        There exists $\Crel >0$ such that
        \begin{align*}
            \enorm{u^\exact - u_\coarse^\exact}
            &\le
            \Crel \, \eta_\coarse(u_\coarse^\exact).
        \end{align*}
    \item[(A4) discrete reliability]\labeltext{A4}{axiom:discreteReliability}
        There exists $\Cdrel >0$ such that
        \begin{align*}
            \enorm{u_\fine^{\exact} - u_\coarse^{\exact}}
            &\le
            \Cdrel \, \eta_\coarse(\TT_\coarse \backslash \TT_\fine, u_\coarse^{\exact})    .
        \end{align*}
\end{description}
As proven in~\cite[Lemma~3.6]{axioms},
an immediate consequence of the axioms~\eqref{axiom:stability}--\eqref{axiom:reliability}, and the C\'ea-type estimate~\eqref{eq:cea} is quasi-monotonicity of the error estimator
with generic constant \(\Cmon \coloneqq 1 + \Cstab[2\Bup_0]\, (1 + \Ccea) \,\Crel\)
\begin{equation}
    \label{eq:quasi-monotonicity}
    \eta_\fine(u_\fine^\exact)
    \leq
    \Cmon \,
    \eta_\coarse(u_\coarse^\exact)
    \quad \text{for all } \TT_\fine \in \T(\TT_\coarse).
\end{equation}
Using discrete reliability~\eqref{axiom:discreteReliability}, the $\Cmon \coloneqq 1 + \Cstab[2\Bup_0]\,\Cdrel$ also provides the monotonicity estimate~\eqref{eq:quasi-monotonicity}. To avoid the use of discrete reliability for full R-linear convergence, we use the first variant when tracking the dependency of the constants.

The remaining axiom of quasi-orthogonality applies to a nested sequence
of finite element spaces \(\XX_\ell \subseteq \XX_{\ell+1}\) for all \(\ell \in \N_0\)
with discrete limit space $\XX_\infty \coloneqq \textup{closure}\big(\bigcup_{\ell=0}^\lmax \XX_\ell\big)$ where the closure is taken with respect to the norm of $\XX$.
Let $u_\infty^\exact \in \XX_\infty$ denote the corresponding Galerkin solution to~\eqref{eq:weakformDiscrete}.
\begin{description}
    \item[(QO) quasi-orthogonality]\labeltext{QO}{axiom:quasiorthogonality}
        For any \(0 < \varepsilon < 1\),
        there exists a constant \(C_{\textup{orth}}(\varepsilon) > 0\) such that
        the discrete solutions \(u_{\ell'}^\exact \in \XX_{\ell'}\) to~\eqref{eq:weakformDiscrete}
        for \(\ell' \in \N_0\) satisfy
        \begin{equation}
            \label{eq:quasiorthogonality}\tag{QO}
            \sum_{\ell' = \ell}^{\ell+n}
            \bigl[ \enorm{u_{\ell'+1}^{\exact} - u_{\ell'}^{\exact}}^2
            - \varepsilon \, \eta_{\ell'}(u_{\ell'}^{\exact})^2 \bigr]
            \le C_{\textup{orth}}(\varepsilon) \, \eta_\ell(u_\ell^\exact)^2
            \quad\text{ for all } \ell, n \in \N_0.
        \end{equation}
\end{description}

The overall error of the Newton iterates $u_\coarse^k \in \XX_\coarse$ of the discrete nonlinear problem decomposes
into a discretization error controlled by the a~posteriori error estimator $\eta_\coarse(\cdot)$
and the linearization error estimate from Lemma~\ref{lemma:linearization_estimator}.
\begin{lemma}[a~posteriori estimate]
    \label{lemma:aposteriori}
    Suppose the assumptions of Theorem~\ref{theorem:newton} and the axioms~\eqref{axiom:stability} and~\eqref{axiom:reliability}.
    Then, the iterates $u_\coarse^k \in \XX_\coarse$ of Algorithm~\ref{algorithm:ADN} with $\overline\Bup_0>0$ from~\eqref{eq:level_set_boundedness} satisfy
    \begin{equation}
        \label{eq:aposteriori}
        \enorm{u^\exact -u_\coarse^k}
        \le
		C_{\textup{rel}} \, \eta_\coarse(u_\coarse^k) + (1+C_{\textup{rel}} \,C_{\textup{stab}}[\overline\Bup_0]) \,\alpha^{-1} \, \norm{F - \AA u_\coarse^k}_{\XX_\coarse'}.
    \end{equation}
\end{lemma}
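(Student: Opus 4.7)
The plan is to split the total error via the triangle inequality into a discretization part $\enorm{u^\exact - u_\coarse^\exact}$ and a linearization part $\enorm{u_\coarse^\exact - u_\coarse^k}$, and then to dominate each part by the respective computable quantity appearing on the right-hand side of~\eqref{eq:aposteriori}.

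For the discretization part, I would first apply reliability~\eqref{axiom:reliability} to obtain $\enorm{u^\exact - u_\coarse^\exact} \le C_{\textup{rel}}\, \eta_\coarse(u_\coarse^\exact)$, and then use stability~\eqref{axiom:stability} with $\XX_\fine = \XX_\coarse$, $\UU_\coarse = \TT_\coarse$, $v_\fine = u_\coarse^k$, and $v_\coarse = u_\coarse^\exact$ in order to exchange the non-computable $\eta_\coarse(u_\coarse^\exact)$ for $\eta_\coarse(u_\coarse^k) + C_{\textup{stab}}[\overline\Bup_0]\, \enorm{u_\coarse^\exact - u_\coarse^k}$. For the linearization part, I would invoke the lower bound in Lemma~\ref{lemma:linearization_estimator} (equivalently, the Lipschitz continuity~\eqref{eq:inverse_Lipschitz} of the inverse operator) to deduce $\alpha\, \enorm{u_\coarse^\exact - u_\coarse^k} \le \norm{F - \AA u_\coarse^k}_{\XX_\coarse'}$. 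Inserting this residual bound into the discretization estimate and collecting all terms proportional to $\enorm{u_\coarse^\exact - u_\coarse^k}$ immediately produces the coefficient $1 + C_{\textup{rel}}\, C_{\textup{stab}}[\overline\Bup_0]$ in front of $\alpha^{-1}\,\norm{F - \AA u_\coarse^k}_{\XX_\coarse'}$, thereby yielding~\eqref{eq:aposteriori}.

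The only step that calls for a brief verification, and the main obstacle compared to the classical linear setting, is the admissibility of the stability axiom at the threshold $\vartheta = \overline\Bup_0$, namely the bound $\max\{\enorm{u_\coarse^k}, \enorm{u_\coarse^k - u_\coarse^\exact}\} \le \overline\Bup_0$. The first inequality is furnished directly by Lemma~\ref{lemma:adaptiveDamping}(i). For the second, I would combine the inclusion $u_\coarse^k \in \LL_\coarse[u_\coarse^0]$ (implicit in the acceptance condition~\eqref{algorithm:ADN:dampingCriterion}) with the uniform residual estimate~\eqref{eq:Newton:uniformBoundResidual} to deduce $\enorm{u_\coarse^\exact - u_\coarse^k} \le \alpha^{-1}\, \Lup[(1+c_0)\Bup_0]\, (1+c_0)\, \Bup_0 \le \overline\Bup_0$, where the latter inequality is precisely the definition~\eqref{eq:level_set_boundedness} of $\overline\Bup_0$. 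Once this short bookkeeping is completed, the remainder of the argument collapses to the three-line chain of inequalities described above, so no further difficulty is anticipated.
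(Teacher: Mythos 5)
Your proposal is correct and follows essentially the same route as the paper's proof: triangle inequality to split into discretization and linearization parts, reliability~\eqref{axiom:reliability} plus stability~\eqref{axiom:stability} for the former, and the lower bound of Lemma~\ref{lemma:linearization_estimator} for the latter, with the coefficient $1 + \Crel\,\Cstab[\overline\Bup_0]$ arising exactly as you describe. Your explicit verification of the admissibility threshold $\vartheta = \overline\Bup_0$ via~\eqref{eq:Newton:uniformBoundResidual} and~\eqref{eq:level_set_boundedness} is a welcome piece of bookkeeping that the paper leaves implicit by citing Lemma~\ref{lemma:adaptiveDamping}(i) and~\eqref{eq:exact:bounded}.
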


\begin{proof}
    The uniform boundedness from Lemma~\ref{lemma:adaptiveDamping}\textup{(i)} and
    the bound \(\enorm{u_\coarse^\exact} \leq \Bup_0\) from~\eqref{eq:exact:bounded} allow to
    apply stability~\eqref{axiom:stability} with constant \(\vartheta = \overline\Bup_0 \geq \Bup_0\).
    This and reliability~\eqref{axiom:reliability} prove
    \[
        \enorm{u^\exact - u_\coarse^\exact}
        \eqreff{axiom:reliability}\leq
        \Crel \,
        \eta_\coarse(u_\coarse^\exact)
        \eqreff{axiom:stability}\leq
        \Crel \,
        \big(
            \eta_\coarse(u_\coarse^k) + \Cstab\big[\overline\Bup_0\big]\, \enorm{u_\coarse^\exact- u_\coarse^k}
        \big).
    \]
    A triangle inequality leads us to
    \[
        \enorm{u^\exact -u_\coarse^k}
        \leq
        \enorm{u^\exact - u_\coarse^\exact} + \enorm{u_\coarse^\exact - u_\coarse^k}
        \le \Crel \, \eta_\coarse(u_\coarse^k) + (1+C_{\textup{rel}} \,C_{\textup{stab}}[\overline\Bup_0]) \, \enorm{u_\coarse^\exact - u_\coarse^k}.
    \]
    Applying the a~posteriori estimate~\eqref{eq:linearization_estimator} for the linearization error
    completes the proof.
\end{proof}

\subsection{Newton-based adaptive iteratively linearized FEM (NAILFEM)}
\label{subsection:AINFEM}

The following algorithm combines an established adaptive mesh-refinement procedure with
the adaptively damped Newton's method from Section~\ref{section:linearization}
for the solution of the discrete nonlinear problem~\eqref{eq:weakformDiscrete}
on each mesh level.
\begin{algorithm}[NAILFEM]\label{algorithm:AILFEM}
    {\textup{\textbf{Input:}}} Initial guess $u_{-1}^\kmax \coloneqq u_0^0 \coloneqq 0 \in \XX_0$,
    mesh-adaptivity parameters $0 < \theta  \le 1$ and $\Cmark \ge 1$,
    stopping parameter $\lambdalin > 0$,
    fixed minimal number \(\kmin \geq 1\) of Newton iterations,
    initial value for overall minimal damping $\delta_{\textup{min}} \coloneqq 1/2$.
    \smallskip

    \noindent
    \textup{\textbf{for}} \; $\ell=0,1,\ldots$ \;
    \textup{\textbf{repeat}} \; the steps {\ref{algorithm:meshFirst}}--{\ref{algorithm:meshLast}}:
    \begin{enumerate}[label=\textup{(\roman*)}]
        \item\label{algorithm:meshFirst}
        \textup{{\texttt{SOLVE \& ESTIMATE.}}}

        \noindent\textup{\textbf{for}} \; $k = 0, 1,\ldots$ \; \textup{\textbf{repeat}} \;
    the steps {\textup{\ref{algorithm:NAIL:linearizationFirstFull}}--\textup{\ref{algorithm:NAIL:stoppingCriterion}}}: \smallskip
    \begin{enumerate}[label=\textup{(\alph*)}]
        \item\label{algorithm:NAIL:linearizationFirstFull}
            \emph{(Newton update)}
            Compute $\varrho_\ell^k \in \XX_\ell$ by solving the linear equation
            \begin{equation*}
                \label{algorithm:NAIL:NewtonUpdate}%
                \dual{\d{\AA}[u_\ell^{k}]\varrho_\ell^{k}}{v_\ell}
                =
                \dual{F-\AA u_\ell^{k}}{v_\ell}
                \quad\text{ for all } v_\ell \in \XX_\ell.
            \end{equation*}
        \item
            \label{algorithm:NAIL:adaptiveDampingProcedure}
            \emph{(determine damping parameter)} Initialize $\delta \coloneqq 1$. Check if
            \begin{equation}
                \label{algorithm:NAIL:dampingCriterionFull}
                \norm{F - \AA(u_\ell^{k} + \delta \varrho_\ell^{k})}_{\XX_{\ell}'}
                \leq
                (1-\delta_{\textup{min}}^{3/2} / 2) \,
                \norm{F - \AA u_\ell^{k}}_{\XX_{\ell}'}.
            \end{equation}
            \smallskip
            \noindent\textup{\textbf{repeat}}
            $\delta \coloneqq \delta/2$ and update the minimum
            $\delta_{\textup{min}} \coloneqq \min\{\delta, \delta_{\textup{min}}\}$ \textup{\textbf{until}~\eqref{algorithm:NAIL:dampingCriterionFull} is met.}
        \item\label{algorithm:NAIL:termination}
            \emph{(update iterate and compute estimator)}
            Set $\delta_\ell^k \coloneqq \delta$
            and $u_\ell^{k+1} \coloneqq u_\ell^k + \delta_\ell^k \, \varrho_\ell^k$. Compute the local contributions $\eta_\ell(T,u_\ell^{k})$ for all $T \in \TT_\ell$. \smallskip
            \smallskip
            \item\label{algorithm:NAIL:stoppingCriterion}
            \emph{(stopping criterion)} Check the stopping criterion
            \begin{equation}
                \label{algorithm:AILFEM:stopping}
				k + 1 \geq \kmin
                \quad\text{and}\quad
				\norm{F - \AA u_\ell^{k+1}}_{\XX_{\ell}'}  \le \lambda_{\textup{lin}} \, \eta_\ell(u_\ell^{k+1}).
            \end{equation}
            If~\eqref{algorithm:AILFEM:stopping} evaluates to \textup{\texttt{TRUE}}, terminate the \(k\)-loop and set $\kmax[\ell] \coloneqq k+1$.
    \end{enumerate}
\smallskip    \item {\textup{\texttt{{MARK.}}}}
        Determine a set
        \(
            \MM_\ell \in \mathbb{M}_\ell[\theta, u_\ell^{\kmax}]
            \coloneqq
            \set{\UU_\ell \subseteq \TT_\ell \colon
                \theta \, \eta_\ell(u_\ell^{\kmax})^2 \le \eta_\ell(\UU_\ell, u_\ell^{\kmax})^2
            }
        \)
        such that
        \begin{equation}\label{eq:doerfler}
            \# \MM_\ell \le \Cmark  \min_{\UU_\ell \in \mathbb{M}_\ell[\theta, u_\ell^{\kmax}]} \# \UU_\ell.
        \end{equation}
    \item\label{algorithm:meshLast}
        {\textup{\texttt{{REFINE.}}}}
        Generate the new mesh $\TT_{\ell+1} \coloneqq  \mathtt{refine} (\TT_\ell, \MM_\ell)$ by employing NVB
        and define $u_{\ell+1}^{0} \coloneqq u_\ell^{\kmax}$ (nested iteration).
    \end{enumerate}
    {\textup{\textbf{Output:}}} Sequence of approximations $u_\ell^k$, error estimators $\eta_\ell(u_\ell^{k})$, and residuals $\norm{F-\AA u_\ell^k}_{\XX_\ell'}$.
\end{algorithm}

The output sequence of approximations $u_\ell^k$ of Algorithm~\ref{algorithm:AILFEM}
induces the extended index set
\begin{equation*}
    \overline\QQ
    \coloneqq
    \set{
        (\ell, k) \in \N_0^2 \;:\;
		u_\ell^k \text{ is computed by Algorithm~\ref{algorithm:AILFEM}}
    }.
\end{equation*}
Upon defining $\kmax[\ell] \coloneqq \sup\set{k \in \N_0 \;:\; (\ell, k) \in \overline\QQ}$, we define the index set
\[
    \QQ \coloneqq \overline\QQ \setminus \set{(\ell, \kmax[\ell]) \;:\; \ell \in \N_0 \text{ with } (\ell+1, 0) \in \QQ}.
\]
Note that $\overline{\QQ} \subset \N_0 \times \N_0$ is a countably infinite index set such that,
for all $(\ell,k) \in \N_0 \times \N_0$,
\begin{align*}
    (\ell+1,0) \in \overline{\QQ }
    &\quad \Longrightarrow \quad
    (\ell, \kmax[\ell]) \in \overline{\QQ} \text{ and }\kmax[\ell] = \max \set{ k \in \N_0 \;:\;(\ell,k) \in \overline{\QQ}},
    \\
    (\ell, k+1) \in \overline{\QQ}
    &\quad \Longrightarrow \quad (\ell, k) \in \QQ.
\end{align*}
With $\lmax \coloneqq \sup \set{ \ell \in \N_0 \;:\; (\ell,0) \in \QQ}$,
it then follows that either $\lmax = \infty$ or $\kmax[\lmax] = \infty$ and that $\kmax[\ell] \ge 1$ counts the number of Newton iterations for all mesh levels $\ell$ with $\ell < \lmax$.

The pair $(\ell, \kmax[\ell])$ is not included in $\QQ$ for the reason that
either $(\ell + 1, 0) \in \QQ$ and $u^0_{\ell + 1} = u^{\kmax[\ell]}_\ell$ by nested iteration
or even $\kmax[\ell] = \infty$ if the $k$-loop does not terminate after finitely many steps.
The sequential nature of Algorithm~\ref{algorithm:AILFEM} induces a lexicographical ordering on $\QQ$:
For $(\ell, k)$ and $(\ell^\prime, k^\prime) \in \QQ$,
we write $(\ell^\prime, k^\prime) < (\ell, k)$ if and only if
$(\ell^\prime, k^\prime)$ appears earlier in Algorithm~\ref{algorithm:AILFEM} than $(\ell, k)$.
Given this ordering, the \emph{total step counter} is defined by
\begin{align*}
    \abs{\ell, k}
    \coloneqq
    \#\set{(\ell^\prime, k^\prime) \in \QQ \;:\; (\ell^\prime, k^\prime) < (\ell, k)}
    =
    k + \sum_{\ell' = 0}^{\ell - 1} \kmax[\ell^{\prime}],
\end{align*}
which provides the total number of Newton iterations up to the computation of $u^k_\ell$.
Throughout the paper, we abbreviate $(\ell, \kmax) \coloneqq (\ell, \kmax[\ell])$ and $u^\kmax_{\ell} \coloneqq u^{\kmax[\ell]}_{\ell}$.

\subsection{Newton iteration under mesh refinement}

A crucial assumption in the analysis of Theorem~\ref{theorem:newton} is
the boundedness of the initial iterate $\enorm{u_\ell^0} \le c_0 \Bup_0$ for each mesh level \(\ell \in \N_0\).
On a fixed level, the discrete residual norm allows to control the norms of the iterates
as in Lemma~\ref{lemma:adaptiveDamping}~(i).
However, the discrete dual norm may increase under mesh refinement due to $\XX_{\ell-1} \subseteq \XX_\ell$
even in the case of nested iteration \(u_\ell^0 = u_{\ell-1}^\kmax\), hence
\[
    \norm{F - \AA u_{\ell-1}^\kmax}_{\XX_{\ell-1}'}
    \le
    \norm{F - \AA u_\ell^0}_{\XX_\ell'}
	\quad \text{ for any mesh level } 0 < \ell \text{ with } (\ell, 0) \in \overline\QQ.
\]
The following result relies on the fact that a minimal number of Newton iterations \(\kmin\) is required to compensate for the
factor \(\Lup[(1+c_0)\Bup_0]/\alpha\) of the residual norm that appears when changing mesh levels.
Given \(0 < \overline{q} < 1\) and $\rr_0$ from Lemma~\ref{lemma:adaptiveDamping},
define \(c_0 \coloneqq 2 \, (1 - \overline{q})^{-1}\) and choose the minimal \(\kmin \in \N\) such that
\begin{equation}
    \label{eq:Newton:condition_k0}
	q_0
    \coloneqq
    \frac{\Lup[(1 + c_0) \Bup_0]}{\alpha}\,
    \rr_0^\kmin
    < \overline{q}
    < 1.
\end{equation}
This choice of a minimal number of Newton iterations in Algorithm~\ref{algorithm:AILFEM}
guarantees uniform boundedness of all iterates and quasi-contraction of the residual.
\begin{lemma}
    \label{lemma:boundFinalIterate}
	Suppose that the assumptions~\eqref{assump:stronglyMonotone}--\eqref{assump:derivativeLocallyLipschitz} hold
    and that \(\kmin \in \N\) ensures~\eqref{eq:Newton:condition_k0}.
    Then, the output sequence \(u_\ell^k\) of Algorithm~\ref{algorithm:AILFEM} satisfies
    \begin{description}
        \begingroup\abovedisplayskip=-1.15\baselineskip
        \item[(i) Boundedness of final iterates]
            \begin{equation}
            \mkern240mu \enorm{u_\ell^\kmax} \leq c_0 \Bup_0 \quad \text{ for all } \ell < \lmax.
        \end{equation}
        \item[(ii) Boundedness of iterates]
            \begin{equation}\label{eq:uniformBound}
            \mkern260mu \enorm{u_\ell^k} \leq \overline\Bup_0 \quad \text{ for all } (\ell,k) \in \QQ.
        \end{equation}
    \endgroup
        \item[(iii) Contraction of residual] For all \((\ell, k) \in \QQ\) with \(k \geq \kmin\),
            \begin{equation}
                \label{eq:Newton:residual_contraction}
                \norm{F - \AA u_\ell^k}_{\XX_\ell'}
                \leq
                q_0\,
                \norm{F - \AA u_{\ell-1}^\kmax}_{\XX_{\ell-1}'}
                +
                \alpha q_0 \,
                \enorm{u_\ell^\exact - u_{\ell-1}^\exact}.
            \end{equation}
    \end{description}
\end{lemma}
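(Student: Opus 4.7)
The plan is to prove the three statements by simultaneous induction on $\ell$, exploiting the fact that Lemma~\ref{lemma:adaptiveDamping} can be applied on $\XX_\ell$ whenever the initial iterate satisfies $\enorm{u_\ell^0} \leq c_0 \Bup_0$; by nested iteration $u_\ell^0 = u_{\ell-1}^\kmax$, this precondition on level $\ell$ is exactly assertion~(i) on level $\ell-1$.

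For the base case $\ell = 0$, the input $u_0^0 = 0$ trivially satisfies the required bound, so Lemma~\ref{lemma:adaptiveDamping} yields (ii) on level~$0$ and the contraction $\norm{F-\AA u_0^k}_{\XX_0'} \leq \rr_0^k \norm{F-\AA 0}_{\XX_0'}$. Combined with $\norm{F - \AA 0}_{\XX_0'} \leq \norm{F-\AA 0}_{\XX'} = \alpha \Bup_0$ and the lower bound in Lemma~\ref{lemma:linearization_estimator}, this yields $\enorm{u_0^\kmax} \leq (1+\rr_0^\kmin)\Bup_0 \leq 2\Bup_0 \leq c_0 \Bup_0$, which is (i) at $\ell = 0$. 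Statement (iii) is vacuous here.

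For the inductive step $\ell \geq 1$, assume (i) on level $\ell-1$, i.e.\ $\enorm{u_\ell^0} \leq c_0 \Bup_0$. Then Lemma~\ref{lemma:adaptiveDamping} on $\XX_\ell$ immediately provides (ii) on level $\ell$ and the contraction $\norm{F - \AA u_\ell^k}_{\XX_\ell'} \leq \rr_0^\kmin \norm{F - \AA u_{\ell-1}^\kmax}_{\XX_\ell'}$ for $k \geq \kmin$. To derive (iii), I invoke the upper bound of Lemma~\ref{lemma:linearization_estimator} on the larger space $\XX_\ell$ with $\vartheta = (1+c_0)\Bup_0$, which is admissible because $\enorm{u_\ell^\exact} \leq \Bup_0$ and $\enorm{u_\ell^\exact - u_{\ell-1}^\kmax} \leq \Bup_0 + c_0\Bup_0$. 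I then split $\enorm{u_\ell^\exact - u_{\ell-1}^\kmax}$ by triangle inequality through $u_{\ell-1}^\exact$ and convert the remaining term $\enorm{u_{\ell-1}^\exact - u_{\ell-1}^\kmax}$ into $\alpha^{-1} \norm{F - \AA u_{\ell-1}^\kmax}_{\XX_{\ell-1}'}$ using the lower bound of Lemma~\ref{lemma:linearization_estimator} on $\XX_{\ell-1}$. The substitution $q_0 = \rr_0^\kmin \Lup[(1+c_0)\Bup_0]/\alpha$ from~\eqref{eq:Newton:condition_k0} produces exactly the claimed estimate~\eqref{eq:Newton:residual_contraction}.

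Finally, specializing (iii) at $k = \kmax$ and estimating $\enorm{u_\ell^\exact - u_{\ell-1}^\exact} \leq 2\Bup_0$ via~\eqref{eq:exact:bounded} yields the scalar recursion $R_\ell \leq q_0 R_{\ell-1} + 2\alpha q_0 \Bup_0$ for $R_\ell \coloneqq \norm{F - \AA u_\ell^\kmax}_{\XX_\ell'}$, with base $R_0 \leq \rr_0^\kmin \alpha \Bup_0 \leq \alpha \Bup_0 \cdot 2q_0/(1-q_0)$. A direct computation confirms that $R_\ell \leq \alpha \Bup_0 \cdot 2q_0/(1-q_0)$ is invariant under the recursion, hence holds for all $\ell$. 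Consequently, $\enorm{u_\ell^\kmax} \leq \Bup_0 + \alpha^{-1} R_\ell \leq \Bup_0 (1+q_0)/(1-q_0) \leq 2\Bup_0/(1-\overline{q}) = c_0 \Bup_0$ by monotonicity of $s \mapsto (1+s)/(1-s)$ and $q_0 < \overline{q}$, closing the induction. The main obstacle is the bookkeeping of the local Lipschitz parameter $\vartheta$ across two adjacent mesh levels: the choice $c_0 = 2/(1-\overline{q})$ and the resulting $\vartheta = (1+c_0)\Bup_0$ are calibrated precisely so that the geometric series of cross-level residual growth contributions fits back into the very bound $c_0 \Bup_0$ needed to restart the induction.
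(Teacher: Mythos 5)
Your proof is correct and follows essentially the same line as the paper: you derive (iii) on each level by combining the contraction of Lemma~\ref{lemma:adaptiveDamping} with the dual-norm-increase bound from Lemma~\ref{lemma:linearization_estimator} (the paper's~\eqref{eq:dualNormIncrease}), using (i) at level $\ell-1$ as the induction hypothesis to justify the local Lipschitz parameter $\vartheta = (1+c_0)\Bup_0$, and (ii) drops out of Lemma~\ref{lemma:level_set_boundedness}. Where you diverge is only in how (i) is closed. The paper unrolls the scalar recursion $R_\ell \le q_0 R_{\ell-1} + 2\alpha q_0 \Bup_0$ explicitly down to level $0$ and bounds the resulting finite geometric series by $2\Bup_0/(1-\overline{q}) = c_0\Bup_0$; you instead exhibit and propagate the fixed point of the recursion, the invariant $R_\ell \le 2\alpha q_0\Bup_0/(1-q_0)$, giving the slightly sharper intermediate bound $\enorm{u_\ell^\kmax} \le \Bup_0(1+q_0)/(1-q_0)$ before comparing against $c_0\Bup_0$. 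Both computations are equivalent in substance; the invariant formulation is a little cleaner in that it avoids the telescoping and the harmless overestimate $\norm{F-\AA u_0^\kmax}_{\XX_0'} \le \norm{F-\AA u_0^0}_{\XX_0'}$ that the paper's unrolling introduces. One small point of precision: your induction hypothesis must carry the residual bound $R_{\ell-1} \le 2\alpha q_0\Bup_0/(1-q_0)$ rather than merely (i) at $\ell-1$, since (i) alone does not recover the stronger residual bound needed to propagate the invariant; your write-up makes this implicit, and it should be stated.
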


\begin{proof}
    \emph{Step~1 (contraction of residual on a fixed level).}\;
    Suppose that $\enorm{u_\ell^0} \leq c_0 \Bup_0$ holds.
    Then, the linearization error estimate~\eqref{eq:linearization_estimator} from Lemma~\ref{lemma:linearization_estimator}
allows for an upper bound in terms of the difference of the exact discrete solutions
\begin{equation}
    \label{eq:dualNormIncrease}
    \begin{split}
        \norm{F - \AA u_\ell^0}_{\XX_\ell'} \;
        &\eqreff*{eq:linearization_estimator}\leq \;
        \Lup[(1+c_0)\Bup_0] \,\big(
            \enorm{u_{\ell-1}^\exact - u_{\ell-1}^\kmax}
            + \enorm{u_\ell^\exact - u_{\ell-1}^\exact}
            \big)
        \\
        &\eqreff*{eq:linearization_estimator}\leq \;
        \frac{\Lup[(1+c_0)\Bup_0]}{\alpha} \,
        \norm{F - \AA u_{\ell-1}^\kmax}_{\XX_{\ell-1}'}
        +
        \Lup[(1+c_0)\Bup_0] \, \enorm{u_\ell^\exact - u_{\ell-1}^\exact}.
    \end{split}
\end{equation}
    The reduction~\eqref{eq:damping:guaranteedContraction} with $\rr_0^\kmax \le \rr_0^\kmin$,
    the definition of $q_0$ from~\eqref{eq:Newton:condition_k0},
    and the estimate~\eqref{eq:dualNormIncrease} prove~\eqref{eq:Newton:residual_contraction}
    with $u_{\ell-1}^\kmax = u_\ell^0 \in \XX_{\ell-1}$ via
    \begin{equation}
        \label{eq:Newton:residual_contraction:one_level}
        \norm{F - \AA u_\ell^\kmax}_{\XX_\ell'}
        \eqreff{eq:damping:guaranteedContraction}\leq
        \, \rr_0^{\kmin} \,
        \norm{F - \AA u_\ell^0}_{\XX_\ell'}
        \eqreff{eq:dualNormIncrease}\leq
        q_0\,
        \norm{F - \AA u_{\ell-1}^\kmax}_{\XX_{\ell-1}'}
        +
        \alpha q_0 \,
        \enorm{u_\ell^\exact - u_{\ell-1}^\exact}.
    \end{equation}

    \emph{Step~2 (boundedness of final iterates).}
    First, note that the initial iterate \(u_0^0 = 0\) satisfies the upper bound \(\enorm{u_0^0} = 0 \leq c_0 \Bup_0\).
    We argue by induction on $\ell$ and suppose that \(\enorm{u_\ell^0} \leq c_0 \Bup_0\) for all \(\ell' \le \ell \in \N_0\).
    The estimate~\eqref{eq:level_set_boundedness} can be applied on the level \(\ell\) for \(v_\coarse = u_\ell^\kmax\)
    and, since \(\enorm{u_{\ell'}^0} \leq c_0 \Bup_0\) by induction hypothesis for all $\ell' \le \ell$, also estimate~\eqref{eq:Newton:residual_contraction:one_level} is applicable for all $\ell' \le \ell$. This proves
    \begin{equation*}
    \begin{aligned}
        \enorm{u_{\ell+1}^0} = \enorm{u_\ell^\kmax} \,
        &\eqreff*{eq:level_set_boundedness}\leq \,
        \alpha^{-1} \,
        \norm{F - \AA u_\ell^\kmax}_{\XX_\ell'}
        +
        \enorm{u_\ell^\exact}
        \eqreff*{eq:Newton:residual_contraction:one_level}\leq
        \alpha^{-1}
        q_0 \,
        \norm{F - \AA u_{\ell-1}^\kmax}_{\XX_{\ell-1}'}
        +
        q_0
        \enorm{u_\ell^\exact - u_{\ell-1}^\exact}
        +
        \enorm{u_\ell^\exact}
        \\
        &\leq
        \alpha^{-1}
        q_0 \,
        \norm{F - \AA u_{\ell-1}^\kmax}_{\XX_{\ell-1}'}
        +
        2 \Bup_0 \, q_0
        +
        \Bup_0.
    \end{aligned}
\end{equation*}
    The successive application of the estimate~\eqref{eq:Newton:residual_contraction:one_level},
    the definition of $\Bup_0 = \alpha^{-1} \norm{F - \AA u_0^0}_{\XX_0'}$ from~\eqref{eq:exact:bounded},
    and the choice of \(c_0 = 2(1 - \overline{q})^{-1}\) result in
    \[
        \enorm{u_\ell^\kmax}
        \leq
        \alpha^{-1}
        q_0^{\ell} \,
        \norm{F - \AA u_0^0}_{\XX_0'}
        +
        2 \Bup_0\,
        \Big( \sum_{\ell' = 1}^\ell q_0^{\ell'} \Big)
        +
        \Bup_0
        \le
        2 \Bup_0 \sum_{\ell' = 0}^\ell q_{0}^{\ell'}
        \leq
        \frac{2 \Bup_0}{1 - \overline{q}}
        = c_0 \Bup_0.
    \]
	The induction principle concludes the proof of assertion~(i).

    \emph{Step~3.}
    Due to nested iteration, the initial iterate \(u_\ell^0 = u_{\ell-1}^\kmax\) satisfies
	the assumption \(\enorm{u_\ell^0} \leq c_0 \Bup_0\) from Lemma~\ref{lemma:level_set_boundedness}
    for each level \(\ell \in \N_0\) if $\lmax = \infty$, or $\ell \le \lmax$ else.
    Thus, the upper bound~\eqref{eq:level_set_boundedness} holds and
    verifies assertion~(ii).
	This concludes the proof.
\end{proof}

\begin{remark}
    {\textup{(i)}}
    Note that the minimal number of Newton iterations \(\kmin\) in~\eqref{eq:Newton:condition_k0} is determined a~priori.
    Although a large guaranteed reduction in~\eqref{eq:damping:guaranteedContraction} close to $1$
    might lead to very large values of \(\kmin\) to ensure the condition~\eqref{eq:Newton:condition_k0},
    the quadratic convergence of the Newton iteration will provide the sufficient contraction much earlier.
    It is remarkable that the Newton contraction does not need to compensate the increase of the dual norm
    bounded by \(\enorm{u_\ell^\exact - u_{\ell-1}^\exact}\) as this term is driven to zero by the adaptive mesh refinement.

    {\textup{(ii)}}
    Previous works~\cite{bbimp2022cost,bps2024} on locally Lipschitz problems
    base on the Zarantonello iteration~\cite[Section~25.4]{zeidler} with a strict norm contraction
    and ensure this condition by a suitable adaptation of the stopping criterion.
    This allows to circumvent the need for a minimal number of linearization iterations \(\kmin\)
    while converging significantly slower in practice than the Newton iteration.

    {\textup{(iii)}}
    By virtue of the uniform bound of Lemma~\ref{lemma:boundFinalIterate}\textup{(ii)},
    all involved locally Lipschitz constants and stability constants are uniformly bounded
    by $\Lup[2\overline\Bup_0]$ and $\Cstab[2\overline\Bup_0]$, respectively.
    Throughout the remaining paper, we thus abbreviate
    \begin{equation}
        \label{eq:uniform_constants}
        \overline{\Lup} \coloneqq \Lup[2\overline\Bup_0]
        \quad \text{and} \quad
        \CCstab \coloneqq \Cstab[2\overline\Bup_0].
    \end{equation}
\end{remark}

Standard arguments in the spirit of~\cite[Proposition~4.5]{ghps2018} reveal that an infinite number of Newton iterations implies that
the sought solution $u^\exact = u_\lmax^\exact$ is already discrete.
\begin{lemma}
    \label{lemma:kLoop}
    Suppose the assumptions~\eqref{assump:stronglyMonotone}--\eqref{assump:derivativeLocallyLipschitz} as well as
    the axioms~\eqref{axiom:stability} and~\eqref{axiom:reliability}.
    Then, the case $\lmax < \infty$ with $\kmax[\lmax] = \infty$ in Algorithm~\ref{algorithm:AILFEM}
    implies that $u_\lmax^\exact = u^\exact$ and $\eta_\lmax(u_\lmax^\exact) = 0$.
\end{lemma}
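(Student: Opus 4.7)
The plan is to work entirely on the terminal mesh $\TT_\lmax$ and exploit the fact that the $k$-loop on this level must run forever, which means the stopping criterion~\eqref{algorithm:AILFEM:stopping} never evaluates to \texttt{TRUE}. Combining a convergence statement for the inner Newton iterates with continuity of the estimator (via stability) will force the estimator limit to vanish.

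First, I would verify the hypothesis $\enorm{u_\lmax^0} \le c_0 \Bup_0$ needed to activate the contraction of Lemma~\ref{lemma:adaptiveDamping}(ii) on level $\lmax$. If $\lmax = 0$, the initial guess $u_0^0 = 0$ satisfies the bound trivially. If $\lmax \ge 1$, then nested iteration gives $u_\lmax^0 = u_{\lmax - 1}^\kmax$ and Lemma~\ref{lemma:boundFinalIterate}(i) (applied at $\ell = \lmax - 1 < \lmax$) provides the bound. Lemma~\ref{lemma:adaptiveDamping}(ii) then yields
\begin{equation*}
    \norm{F - \AA u_\lmax^{k+1}}_{\XX_\lmax'}
    \le
    \rr_0 \, \norm{F - \AA u_\lmax^{k}}_{\XX_\lmax'}
    \quad\text{for all } k \in \N_0,
\end{equation*}
hence $\norm{F - \AA u_\lmax^k}_{\XX_\lmax'} \to 0$ as $k \to \infty$. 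The a~posteriori linearization bound~\eqref{eq:linearization_estimator} upgrades this to $\enorm{u_\lmax^\exact - u_\lmax^k} \to 0$.

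Next, I would invoke stability~\eqref{axiom:stability} with $\UU_\lmax = \TT_\lmax$ and $\vartheta = 2\overline\Bup_0$, which is admissible because Lemma~\ref{lemma:boundFinalIterate}(ii) gives $\enorm{u_\lmax^k} \le \overline\Bup_0$ and~\eqref{eq:exact:bounded} gives $\enorm{u_\lmax^\exact} \le \Bup_0 \le \overline\Bup_0$. Stability then provides continuity of the estimator along the Newton sequence, i.e., $\eta_\lmax(u_\lmax^k) \to \eta_\lmax(u_\lmax^\exact)$. Since the assumption $\kmax[\lmax] = \infty$ means that~\eqref{algorithm:AILFEM:stopping} fails for every $k$, and since for $k + 1 \ge \kmin$ the first sub-condition is satisfied, the second sub-condition must fail, giving
\begin{equation*}
    \norm{F - \AA u_\lmax^{k+1}}_{\XX_\lmax'}
    >
    \lambdalin \, \eta_\lmax(u_\lmax^{k+1})
    \quad\text{for all } k + 1 \ge \kmin.
\end{equation*}
Passing to the limit $k \to \infty$ then forces $0 \ge \lambdalin \, \eta_\lmax(u_\lmax^\exact)$, and since $\lambdalin > 0$ this proves $\eta_\lmax(u_\lmax^\exact) = 0$. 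Reliability~\eqref{axiom:reliability} finally concludes $\enorm{u^\exact - u_\lmax^\exact} \le \Crel \, \eta_\lmax(u_\lmax^\exact) = 0$.

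The only genuine technicality is the verification of the boundedness prerequisite on the initial iterate of the terminal level; once that is in place, the rest is a clean passage to the limit using Lemmas~\ref{lemma:adaptiveDamping} and~\ref{lemma:linearization_estimator} together with the axioms of adaptivity.
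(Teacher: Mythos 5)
Your proposal is correct and follows essentially the same approach as the paper's proof: boundedness of $u_\lmax^0$ (via Lemma~\ref{lemma:boundFinalIterate}(i)) activates the guaranteed residual contraction from Lemma~\ref{lemma:adaptiveDamping}, the never-satisfied stopping criterion gives $\eta_\lmax(u_\lmax^k) < \lambdalin^{-1}\norm{F-\AA u_\lmax^k}_{\XX_\lmax'} \to 0$, and stability plus reliability transfer this to $\eta_\lmax(u_\lmax^\exact) = 0$ and $u_\lmax^\exact = u^\exact$. The only cosmetic difference is the ordering: you first establish the estimator limit $\eta_\lmax(u_\lmax^k) \to \eta_\lmax(u_\lmax^\exact)$ and then pass to the limit in the failed criterion, whereas the paper chains the same three estimates (stability, linearization bound, failed criterion) in one display; both use the same ingredients in the same way.
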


\begin{proof}
	The boundedness of the initial iterate $u_\lmax^0$ from Lemma~\ref{lemma:boundFinalIterate}{\textrm{(i)}} enables the application of Lemma~\ref{lemma:adaptiveDamping}.
	Consequently, all iterates are bounded with \(\enorm{u_\lmax^k} \leq \Bup_0\) for all \(k \in \N_0\).
    By assumption, the linearization stopping criterion~\eqref{algorithm:AILFEM:stopping} is never met
    and the convergence~\eqref{eq:damping:guaranteedContraction} of the Newton iteration on the fixed level \(\lmax\) implies
    \[
        \eta_\lmax(u_\lmax^k)
        \eqreff{algorithm:AILFEM:stopping}<
        \lambdalin^{-1} \,
        \norm{F-\AA u_\lmax^k}_{\XX_\lmax'}
        \xrightarrow{\eqref{eq:damping:guaranteedContraction}} 0
        \quad \text{ as } k \to \infty.
    \]
	Reliability~\eqref{axiom:reliability} and stability~\eqref{axiom:stability} conclude the proof with
    \[
        \enorm{u^\exact - u_\lmax^\exact}
		\eqreff{axiom:reliability}\lesssim
        \eta_\lmax(u_\lmax^\exact)
        \eqreff{axiom:stability}\lesssim
		\eta_\lmax(u_\lmax^k) + \enorm{u_\lmax^\exact - u_\lmax^k}
        \stackrel{\eqref{eq:linearization_estimator}}\lesssim
        \eta_\lmax(u_\lmax^k) + \norm{F-\AA u_\lmax^k}_{\XX_\lmax'}
        \longrightarrow 0 \quad \text{ as } k \to \infty.
        \qedhere
    \]
\end{proof}


\section{Convergence analysis}\label{section:mainResults}

This section is devoted to the two main results of this work: Full R-linear convergence (Theorem~\ref{theorem:fullRLinearConvergence}) and optimal convergence rates (Theorem~\ref{theorem:optimalRates}) of the proposed NAILFEM (Algorithm~\ref{algorithm:AILFEM}).

\subsection{Full R-linear convergence}
\label{subsection:fullRLinearConvergence}

The first main result concerns the quasi-error
\begin{equation}
    \label{eq:quasiError}
    \Eta_\ell^k
    \coloneqq
    \norm{F-\AA u_\ell^k }_{\XX_\ell'}
    +
    \eta_\ell(u_\ell^k)
    \quad \text{ for } (\ell, k) \in \QQ.
\end{equation}
Its two components control the linearization and discretization error and
it holds that $\enorm{u^\exact - u_\ell^k} \lesssim \Eta_\ell^k$
by the a posteriori error estimate~\eqref{eq:aposteriori}.
\begin{theorem}[full R-linear convergence]
    \label{theorem:fullRLinearConvergence}
    Suppose that the assumptions~\eqref{assump:stronglyMonotone}--\eqref{assump:derivativeLocallyLipschitz},
    the axioms~\eqref{axiom:stability}--\eqref{axiom:reliability}, and quasi-orthogonality~\eqref{eq:quasiorthogonality} hold.
    Let the minimal number \(\kmin \in \N\) of Newton iterations be chosen such that~\eqref{eq:Newton:condition_k0} is satisfied.
    Then, for arbitrary $0< \theta \le 1$, $C_{\textup{mark}} \ge 1$, and $\lambda_{\textup{lin}} > 0$,
    there holds full R-linear convergence, i.e.,
    there exist a constant $C_{\textup{lin}}>0$ and a contraction factor $0 < q_{\textup{lin}} < 1$ such that
    \begin{equation}
        \label{eq:fullRLinearConvergence}
        \Eta_\ell^k
        \le
        C_{\textup{lin}}\, q_{\textup{lin}}^{|\ell, k|-|\ell',k'|} \,
        \Eta_{\ell'}^{k'}
        \quad\text{for all } (\ell, k), (\ell', k') \in \QQ \text{ with } |\ell', k'| \le |\ell, k|.
    \end{equation}
    The constants $C_{\textup{lin}}$ and $q_{\textup{lin}}$ are independent of the mesh size and the iteration number, but depend only on
    $\alpha, \delta_0, \kmin, q_0, \qred, \rr_0, \Bup_0, \CCstab, C_{\textup{Céa}}, \Crel,
    C_{\mathrm{orth}}(\varepsilon), \delta_{\textup{min}}, \theta, \lambdalin$
    with
    $C_{\mathrm{orth}}(\varepsilon)$ from~\eqref{eq:quasiorthogonality} for $\varepsilon>0$ chosen below in Step~3 of the proof.
  \end{theorem}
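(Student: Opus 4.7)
I plan to establish full R-linear convergence by proving a stepwise contraction of a weighted quasi-error equivalent to $\Eta_\ell^k$. Concretely, I would introduce
$$\mathcal{H}_\ell^k := \|F - \AA u_\ell^k\|_{\XX_\ell'}^2 + \gamma\, \eta_\ell(u_\ell^k)^2 + \mu\, \enorm{u^\exact - u_\ell^\exact}^2$$
for suitable weights $\gamma, \mu > 0$. By Lemma~\ref{lemma:aposteriori} together with reliability~\eqref{axiom:reliability}, one readily obtains $\mathcal{H}_\ell^k \eqsim (\Eta_\ell^k)^2$ with constants depending only on the data, so it suffices to prove the stepwise contractions
$$\mathcal{H}_\ell^{k+1} \le q_{\textup{lin}}^2\, \mathcal{H}_\ell^k \text{ for } (\ell,k+1) \in \QQ \quad \text{and} \quad \mathcal{H}_{\ell+1}^0 \le q_{\textup{lin}}^2\, \mathcal{H}_\ell^\kmax \text{ for } (\ell+1,0) \in \QQ$$
for a common $0 < q_{\textup{lin}} < 1$. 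Iterating these two inequalities along the lexicographic ordering on $\QQ$ yields \eqref{eq:fullRLinearConvergence}.

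The intra-level step is the easier case. The discretization-error contribution in $\mathcal{H}$ is unchanged, the residual contracts by the factor $\rr_0^2$ thanks to Lemma~\ref{lemma:adaptiveDamping}(ii), and by stability~\eqref{axiom:stability} together with the Newton bound $\enorm{u_\ell^{k+1} - u_\ell^k} \le \beta^{-1}\|F - \AA u_\ell^k\|_{\XX_\ell'}$ from~\eqref{eq:Newton:residualDominatesDifference}, the estimator grows by at most a constant multiple of the residual. Choosing $\gamma$ small enough relative to $\rr_0$ and $\CCstab$ absorbs this growth into the residual contraction. The level-transition step is more delicate: by nested iteration $u_{\ell+1}^0 = u_\ell^\kmax$, and the stopping criterion~\eqref{algorithm:AILFEM:stopping} provides $\|F - \AA u_\ell^\kmax\|_{\XX_\ell'} \le \lambdalin\, \eta_\ell(u_\ell^\kmax)$. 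The dual-norm increase bound~\eqref{eq:dualNormIncrease} then controls $\|F - \AA u_{\ell+1}^0\|_{\XX_{\ell+1}'}^2$ in terms of $\eta_\ell(u_\ell^\kmax)^2$ and $\enorm{u_{\ell+1}^\exact - u_\ell^\exact}^2$. Dörfler marking~\eqref{eq:doerfler} combined with stability~\eqref{axiom:stability} and reduction~\eqref{axiom:reduction} applied to the inexact iterate $u_\ell^\kmax$ yields the classical estimator reduction $\eta_{\ell+1}(u_\ell^\kmax)^2 \le q_{\textup{est}}\, \eta_\ell(u_\ell^\kmax)^2$ with $q_{\textup{est}} = 1 - (1 - \qred^2)\theta < 1$. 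The remaining term $\enorm{u^\exact - u_{\ell+1}^\exact}^2$ is treated via quasi-orthogonality~\eqref{axiom:quasiorthogonality}.

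The main obstacle is that \eqref{axiom:quasiorthogonality} is a telescoping identity rather than a stepwise contraction, which means it cannot be converted directly into a one-step bound on $\enorm{u^\exact - u_{\ell+1}^\exact}^2$. The standard remedy, following the blueprint of earlier AILFEM convergence proofs, is to first choose the parameter $\varepsilon$ in~\eqref{eq:quasiorthogonality} sufficiently small so that the estimator contraction $q_{\textup{est}}$ survives the subtraction of an $\varepsilon$-fraction, and then to balance the weights $\gamma$ and $\mu$ so that the coupling terms arising from stability, Dörfler marking, and the dual-norm-increase bound are absorbed. The resulting contraction factor $q_{\textup{lin}} < 1$ and the constant $C_{\textup{lin}}$ depend only on the quantities listed in the statement. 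Combining the two cases with a common factor $q_{\textup{lin}} := \max\{q_1, q_2\}$ and unfolding the equivalence $\mathcal{H} \eqsim \Eta^2$ then concludes the proof.
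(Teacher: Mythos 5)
Your proposed stepwise contraction of a weighted quasi-error has a genuine gap at the level-transition step, and the paper deliberately avoids this route. Concretely, the obstacle you flag---that quasi-orthogonality~\eqref{eq:quasiorthogonality} is only a summed (telescoping) inequality---is not repaired by ``choosing $\varepsilon$ small and balancing $\gamma, \mu$.'' Setting $n=0$ in~\eqref{eq:quasiorthogonality} only gives $\enorm{u_{\ell+1}^\exact - u_\ell^\exact}^2 \lesssim \eta_\ell(u_\ell^\exact)^2$, i.e.,\ a bound by the \emph{current} estimator, not a contraction. When you then try to close the one-step estimate for
$\widetilde\Eta_{\ell+1}^2 = \norm{F-\AA u_{\ell+1}^\kmax}^2_{\XX_{\ell+1}'} + \gamma\,\eta_{\ell+1}(u_{\ell+1}^\kmax)^2$ in terms of $\widetilde\Eta_\ell^2$, the remainder $(\mathrm{R}_\ell)^2 \coloneqq \enorm{u_{\ell+1}^\exact - u_\ell^\exact}^2 - \varepsilon\,\eta_\ell(u_\ell^\exact)^2$ appears additively on the right, and $(\mathrm{R}_\ell)^2$ is \emph{not} bounded by a fraction $q' < 1 - q$ of $\widetilde\Eta_\ell^2$ per step; it is only summable along the adaptive sequence. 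In other words the inequality
$\widetilde\Eta_{\ell+1}^2 \le q\,\widetilde\Eta_\ell^2 + C\,(\mathrm{R}_\ell)^2$ with $q<1$ does not imply $\widetilde\Eta_{\ell+1}^2 \le q_{\textup{lin}}^2\,\widetilde\Eta_\ell^2$ unless $(\mathrm{R}_\ell)^2$ contracts, which the abstract axioms do not supply. This is precisely why the paper invokes the tail-summability criterion from~\cite{bfmps2025} (their Lemma~1) rather than a one-step contraction. Your added term $\mu\,\enorm{u^\exact - u_\ell^\exact}^2$ does not repair this: by reliability~\eqref{axiom:reliability} and stability~\eqref{axiom:stability} it is already absorbed by $\eta_\ell(u_\ell^\kmax)^2$ plus the residual, so it neither tracks nor cancels the remainder, and there is no axiom providing a one-step contraction for it. A ``quasi-Pythagorean'' identity of the kind you would need is derived in the paper only for the concrete semilinear application (Section~5, Case~2) and only asymptotically (for $\ell \ge \ell_0$); it is not part of the abstract assumption~\eqref{eq:quasiorthogonality}, and your proof must work under the abstract assumptions.

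A secondary issue: the $k$-direction is treated too loosely. Because the stopping criterion only triggers after $\kmin$ steps, the quasi-contraction in $k$ from the paper's Step~5 holds only for $k \ge \kmin$, and the quasi-error is merely stable (not contracting) for $k < \kmin$. Your proposal of a uniform stepwise contraction in $k$ with a common $q_{\textup{lin}}$ would therefore fail for the first $\kmin$ Newton iterations on each level. The paper handles this through the distinction $k' < \kmin$ versus $k' \ge \kmin$ in Steps~5--6 and the case analysis of Appendix~A. To make your argument correct you would essentially be forced back to the paper's decomposition: estimator/residual contraction with summable remainder in $\ell$, quasi-contraction for $k \ge \kmin$, stability for $k<\kmin$ and at level changes, and a summability criterion (rather than a pointwise geometric inequality) to upgrade these to full R-linear convergence.
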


\begin{proof}
    The proof of Theorem~\ref{theorem:fullRLinearConvergence} employs
    the summability-based proof strategy from~\cite{bfmps2025} allowing
    to establish R-linear convergence of the quasi-error $\Eta_\ell^k$.
    It consists of eight steps.

    \emph{Step~1 (estimator contraction).}
    Recall that the axiom~\eqref{axiom:reduction} with $\qred$ leads to the Dörfler contraction
    \cite[Equation~(51)]{ghps2021}
    \begin{equation}
        \label{eq:Doerfler_contraction}
        \eta_{\ell+1}(u_{\ell}^\kmax)
        \le
        q_\theta \, \eta_{\ell}(u_{\ell}^\kmax)
        \quad\text{with}\quad
        0 < q_\theta \coloneqq [1 - (1-\qred^2) \, \theta]^{1/2} < 1.
    \end{equation}
    This and the stability~\eqref{axiom:stability} with uniform constant \(\CCstab\) from \eqref{eq:uniform_constants} prove
    \begin{equation*}
        \eta_{\ell+1}(u_{\ell+1}^{\kmax})
        \eqreff{axiom:stability}\le
        \eta_{\ell+1}(u_\ell^\kmax)
        + \CCstab \, \enorm{u_{\ell+1}^{\kmax} - u_\ell^\kmax}
        \eqreff{eq:Doerfler_contraction}\le
        q_\theta \, \eta_{\ell}(u_{\ell}^\kmax)
        + \CCstab \, \enorm{u_{\ell+1}^{\kmax} - u_\ell^\kmax}.
    \end{equation*}
    Choose \(\nu > 0\) sufficiently small such that \(q_\nu \coloneqq (1 + \nu) q_\theta^2 < 1\)
    and abbreviate \(C_\nu \coloneqq (1 + \nu^{-1}) \CCstab^2\).
    The Young inequality with this parameter \(\nu\) shows
    \begin{equation}
        \label{eq:convergence:Doerfler_contraction}
        \eta_{\ell+1}(u_{\ell+1}^{\kmax})^2
        \le
        q_\nu \, \eta_{\ell}(u_{\ell}^\kmax)^2
        + C_\nu \, \enorm{u_{\ell+1}^{\kmax} - u_\ell^\kmax}^2.
    \end{equation}

    \emph{Step~2 (residual contraction).}
    For the contraction constant \(0 < q_0 < 1\) from~\eqref{eq:Newton:residual_contraction},
    choose \(\mu > 0\) sufficiently small such that
    \(q_\mu \coloneqq (1 + \mu) q_0^2 < 1\) and set
    \(C_\mu \coloneqq (1 + \mu^{-1}) \alpha^2 q_0^2\).
    Then, the combination of the residual contraction~\eqref{eq:Newton:residual_contraction} and the Young inequality reads
    \begin{equation}
        \label{eq:convergence:residual_contraction}
        \norm{F - \AA u_{\ell+1}^{\kmax}}_{\XX_{\ell+1}'}^2
        \eqreff{eq:Newton:residual_contraction}\le
        q_\mu \, \norm{F - \AA u_\ell^{\kmax}}_{\XX_{\ell}'}^2
        +
        C_\mu \, \enorm{u_{\ell+1}^\exact - u_\ell^\exact}^2.
    \end{equation}
    The linearization error estimate~\eqref{eq:linearization_estimator}
    and the residual contraction~\eqref{eq:Newton:residual_contraction} yield
    \begin{align*}
        \enorm{u_{\ell+1}^{\kmax}- u_\ell^{\kmax}}
        &\le
        \enorm{u_{\ell+1}^\exact - u_{\ell+1}^{\kmax}}
        + \enorm{u_\ell^\exact - u_\ell^{\kmax}}
        + \enorm{u_{\ell+1}^\exact - u_\ell^\exact}
        \\
        &\eqreff*{eq:linearization_estimator}\le
        \alpha^{-1} \,\norm{F - \AA u_{\ell+1}^{\kmax}}_{\XX_{\ell+1}'}
        + \alpha^{-1} \,\norm{F - \AA u_\ell^{\kmax}}_{\XX_{\ell}'}
        + \enorm{u_{\ell+1}^\exact - u_\ell^\exact}
        \\
        &\eqreff*{eq:Newton:residual_contraction}\le
        (1 + q_0)\alpha^{-1} \, \norm{F - \AA u_\ell^{\kmax}}_{\XX_{\ell}'}
        + (1 + q_0)\, \enorm{u_{\ell+1}^\exact - u_\ell^\exact}.
    \end{align*}
    The Young inequality thus proves
    \begin{equation}
        \label{eq:convergence:final_iterates_difference}
        \enorm{u_{\ell+1}^{\kmax} - u_\ell^{\kmax}}^2
        \le
        2\, (1 + q_0)^2 \alpha^{-2} \, \norm{F - \AA u_\ell^{\kmax}}_{\XX_{\ell}'}^2
        + 2\, (1 + q_0)^2 \, \enorm{u_{\ell+1}^\exact - u_\ell^\exact}^2.
    \end{equation}

    \emph{Step~3 (contraction of quasi-error with respect to \(\ell\)).}
    The stability~\eqref{axiom:stability} and the linearization error estimate~\eqref{eq:linearization_estimator} imply
    \begin{equation}
        \label{eq:estimate_exact_estimator}
        \eta_\ell(u_\ell^\exact)^2 \, \,
        \eqreff*{axiom:stability}\leq \, \,
        2 \,\eta_\ell(u_\ell^\kmax)^2
        +
        2 \CCstab^2 \, \enorm{u_\ell^\exact - u_\ell^\kmax}^2
        \eqreff{eq:linearization_estimator}\le
        2 \, \eta_\ell(u_\ell^\kmax)^2
        +
        2 \CCstab^2 \alpha^{-2} \,
        \norm{F - \AA u_\ell^\kmax}_{\XX_\ell'}^2.
    \end{equation}
    As an intermediate result, we investigate the contraction of the weighted quasi-error
    \begin{equation*}
        \widetilde\Eta_\ell
        \coloneqq
        \big[
            \norm{F-\AA u_\ell^{\kmax}}_{\XX_\ell'}^2
            +
            \gamma \, \eta_\ell(u_\ell^{\kmax})^2
        \big]^{1/2}
        \quad \text{ for } (\ell, \kmax) \in \QQ
    \end{equation*}
    for any parameter \(0 < \gamma < 2^{-1}(1 + q_0)^{-2} \alpha^2 C_\gamma^{-1} (1 - q_\mu)\) ensuring that
    \[
        q_\gamma
        \coloneqq
        q_\mu
        +
        2 (1 + q_0)^2 \alpha^{-2} C_\nu \gamma < 1.
    \]
    Using \(C_\gamma \coloneqq C_\mu + 2 (1 + q_0)^2 C_\nu \gamma\),
    the combination of the
    estimates~\eqref{eq:convergence:Doerfler_contraction}--\eqref{eq:convergence:final_iterates_difference} results in
    \begin{align*}
        (\widetilde\Eta_{\ell+1})^2
        &=
        \norm{F - \AA u_{\ell+1}^{\kmax}}_{\XX_{\ell+1}'}^2
        +
        \gamma \, \eta_{\ell+1}(u_{\ell+1}^{\kmax})^2
        \\
        &\eqreff*{eq:convergence:Doerfler_contraction}\le \, \,
        \norm{F - \AA u_{\ell+1}^{\kmax}}_{\XX_{\ell+1}'}^2
        +
        q_\nu\,\gamma \,  \eta_\ell(u_{\ell}^\kmax)^2
        +
        C_\nu \gamma \, \enorm{u_{\ell+1}^{\kmax} - u_\ell^{\kmax}}^2
        \\
        &\eqreff*{eq:convergence:residual_contraction}\le \, \,
        q_\mu \norm{F - \AA u_{\ell}^{\kmax}}_{\XX_{\ell}'}^2
        +
        C_\mu \, \enorm{u_{\ell+1}^\exact - u_\ell^\exact}^2
        +
        q_\nu\gamma \,  \eta_\ell(u_{\ell}^\kmax)^2
        +
        C_\nu\gamma \,  \enorm{u_{\ell+1}^{\kmax} - u_\ell^{\kmax}}^2
        \\
        &\eqreff*{eq:convergence:final_iterates_difference}\le \, \,
        q_\gamma
        \norm{F - \AA u_{\ell}^{\kmax}}_{\XX_{\ell}'}^2
        +
        q_\nu \gamma \,
        \eta_{\ell}(u_{\ell}^\kmax)^2
        +
        C_\gamma \,
        \enorm{u_{\ell+1}^\exact - u_\ell^\exact}^2.
    \end{align*}
    From~\eqref{eq:estimate_exact_estimator}, we infer that
    \begin{equation*}
        \begin{split}
            0
            &\leq
            2 C_\gamma \varepsilon \,
            \eta_\ell(u_\ell^\kmax)^2
            +
            2 \overline C_{\textup{stab}}^2 \alpha^{-2} C_\gamma \varepsilon \,
            \norm{F - \AA u_\ell^\kmax}_{\XX_\ell'}^2
            -
            C_\gamma \varepsilon \,
            \eta_\ell(u_\ell^\exact)^2.
        \end{split}
    \end{equation*}
    The sum of the previous two formulas reads
    \begin{equation}
        \label{eq:linearConvergenceEstimate}
        \begin{split}
            (\widetilde\Eta_{\ell+1})^2
            &\leq
            \big[
                q_\gamma
                +
                2 \CCstab^2 \alpha^{-2}C_\gamma\varepsilon
            \big] \,
            \norm{F - \AA u_\ell^{\kmax}}_{\XX_{\ell}'}^2
            \\
            &\phantom{{}\leq{}}
            +
            \big[
                q_\nu
                +
                2 \gamma^{-1} C_\gamma \varepsilon
            \big] \,
            \gamma \,
            \eta_\ell(u_\ell^{\kmax})^2
            +
            C_\gamma \,
            \big[
                \enorm{u_{\ell+1}^\exact - u_\ell^\exact}^2
                -
                \varepsilon \, \eta_\ell(u_\ell^\exact)^2
            \big].
        \end{split}
    \end{equation}
    The choice of the parameter \(\varepsilon = \varepsilon[\gamma] > 0\) such that
    \[
        \varepsilon
        <
        \min\big\{
            2^{-1}\CCstab^{-2}\alpha^2 C_\gamma^{-1} (1 - q_\gamma),\:
            2^{-1} C_\gamma^{-1} \gamma (1 - q_\nu)
        \big\}
    \]
    ensures that
    \(
        0 <
        q[\gamma,\varepsilon]
        \coloneqq
        \max\{
            q_\gamma
            +
            2 \CCstab^2 \alpha^{-2} C_\gamma \varepsilon,
            q_\nu
            +
            2 C_\gamma \gamma^{-1} \varepsilon
        \}
        < 1
    \).
    Abbreviating the remainder
    $(\textup{R}_\ell)^2 \coloneqq \enorm{u_{\ell+1}^\exact - u_\ell^\exact}^2 - \varepsilon \, \eta_\ell(u_\ell^\exact)^2$,
    the estimate~\eqref{eq:linearConvergenceEstimate} reads
    \begin{equation}
        \label{eq:summability_criterion}
        (\widetilde\Eta_{\ell+1})^2
        \le
        q[\gamma, \varepsilon] \, (\widetilde\Eta_{\ell})^2
        +
        C_\gamma \, (\textup{R}_\ell)^2
        \quad \text{ for all } \ell \text{ with } (\ell+1,\kmax) \in \QQ
    \end{equation}

    \emph{Step~4 (tail-summability with respect to \(\ell\)).}
    Given \(0 \leq \ell' < \elll\), quasi-orthogonality~\eqref{eq:quasiorthogonality}, quasi-monotonicity~\eqref{eq:quasi-monotonicity} of the estimators, and~\eqref{eq:estimate_exact_estimator}
    establish the tail-summability of the remainder \(\textup{R}_\ell\)
    \begin{equation}
        \label{eq:convergence:remainder_summability}
        \sum_{\ell=\ell'+1}^{\lmax-1}
        (\textup{R}_{\ell})^2
        \eqreff{eq:quasiorthogonality}\lesssim
        \eta_{\ell'}(u_{\ell'}^\exact)^2
        \eqreff{eq:estimate_exact_estimator}\lesssim
        (\Eta_{\ell'}^\kmax)^2.
    \end{equation}
    For all \(N \in \N_0\) with \(\ell'+N < \elll\),
    reliability~\eqref{axiom:reliability}, estimate~\eqref{eq:estimate_exact_estimator},
    and the quasi-monotonicity~\eqref{eq:quasi-monotonicity} of the estimator show that
    \begin{equation}
        \label{eq:convergence:remainder_stability}
        \begin{split}
            (\textup{R}_{\ell'+N})^2
            &\le
            \enorm{u_{\ell'+N+1}^\exact - u_{\ell'+N}^\exact}^2
            +
            \eta_{\ell'+N}(u_{\ell'+N}^\exact)^{2}
            \\
            &\eqreff*{axiom:reliability}\lesssim \, \,
            \eta_{\ell'+N+1}(u_{\ell'+N+1}^\exact)^2
            +
            \eta_{\ell'+N}(u_{\ell'+N}^\exact)^2
            \eqreff{eq:quasi-monotonicity}\lesssim
            \eta_{\ell'}(u_{\ell'}^\exact)^2
            \eqreff{eq:estimate_exact_estimator}\lesssim
            (\Eta_{\ell'}^\kmax)^2.
        \end{split}
    \end{equation}
    The contraction~\eqref{eq:summability_criterion}, the tail-summability~\eqref{eq:convergence:remainder_summability}, and stability~\eqref{eq:convergence:remainder_stability}
    of the remainder (by exploiting the equivalence \(\widetilde{\Eta}_\ell \eqsim \Eta_\ell^{\kmax}\)) verify the assumptions of the tail-summability criterion~\cite[Lemma~1]{bfmps2025}.
    Hence,
    \begin{equation}
        \label{eq:contractionFinalIterates}
        \sum_{\ell = \ell'+1}^{\lmax-1} \Eta_{\ell}^{\kmax}
        \lesssim
        \Eta_{\ell'}^{\kmax}
        \quad\text{ for all } 0 \leq \ell' < \lmax.
    \end{equation}

    \emph{Step~5 (quasi-contraction with respect to \(k\)).}
    Let the mesh level $\ell \in \N_0$ with \((\ell,0) \in \QQ\) be arbitrary.
    For any linearization indices \(k',k \in \N_0\) with $0 \le k' \le k \le \kmax[\ell]$, the
    guaranteed reduction~\eqref{eq:damping:guaranteedContraction} of the residual from Lemma~\ref{lemma:adaptiveDamping}
    and the linearization error estimate~\eqref{eq:linearization_estimator} imply
    \begin{align*}
        \enorm{u_\ell^{k} - u_\ell^{k'}}
        &\leq
        \enorm{u_\ell^{\exact} - u_\ell^{k}}
        +
        \enorm{u_\ell^{\exact} - u_\ell^{k'}}
        \eqreff{eq:linearization_estimator}\leq
        \alpha^{-1} \,
        \big[
            \norm{F - \AA u_\ell^{k}}_{\XX_\ell'}
            +
            \norm{F - \AA u_\ell^{k'}}_{\XX_\ell'}
        \big]
        \\
        &\eqreff*{eq:damping:guaranteedContraction}\leq
        \alpha^{-1} (1+\rr_0^{k-k'}) \,
        \norm{F - \AA u_\ell^{k'}}_{\XX_\ell'}
        \le
        \alpha^{-1} (1+\rr_0) \,
        \norm{F - \AA u_\ell^{k'}}_{\XX_\ell'}.
    \end{align*}
    This and the stability~\eqref{axiom:stability} with constant \(\CCstab\) from~\eqref{eq:uniform_constants} prove
    \begin{equation}
        \label{eq:convergence:linearization_contraction}
        \eta_\ell(u_\ell^{k})
        \eqreff{axiom:stability}\le
        \eta_\ell(u_\ell^{k'})
        + \CCstab \,
        \enorm{u_\ell^{k}- u_\ell^{k'}}
        \le
        \eta_\ell(u_\ell^{k'})
        +
        \CCstab \alpha^{-1} (1 + \rr_0) \,
        \norm{F - \AA u_\ell^{k'}}_{\XX_\ell'}.
    \end{equation}

	Hence, we obtain the stability estimate, for any $0 \le k' \le k \le \kmax[\ell]$,
    \begin{equation}
        \label{eq:quasierror:contraction_in_linearization}
        \Eta_\ell^{k}
        =
        \norm{F - \AA u_\ell^{k}}_{\XX_\ell'}
        +
        \eta_\ell(u_\ell^k)
        \eqreff{eq:convergence:linearization_contraction}\lesssim
        \norm{F - \AA u_\ell^{k}}_{\XX_\ell'}
        +
        \norm{F - \AA u_\ell^{k'}}_{\XX_\ell'}
        +
        \eta_\ell(u_\ell^{k'})
        \eqreff{eq:damping:guaranteedContraction}\lesssim
        \Eta_\ell^{k'}.
    \end{equation}

    If we additionally suppose that \(0 < k_{\textup{min}} \leq k' \leq k < \kmax[\ell]\),
    then the second condition of the linearization stopping criterion~\eqref{algorithm:AILFEM:stopping}
    in Algorithm~\ref{algorithm:AILFEM}{(i)} is not met, i.e.,
    \begin{equation}\label{eq:notMetxFalse}
        \eta_\ell(u_\ell^k)
        <
        \lambdalin^{-1}\,\norm{F - \AA u_\ell^k}_{\XX_\ell'}.
    \end{equation}
    Hence,
    the estimate~\eqref{eq:convergence:linearization_contraction}
    and the guaranteed reduction~\eqref{eq:damping:guaranteedContraction} of the residual prove
    \begin{align*}
        \Eta_\ell^{k+1}
        &=
        \norm{F - \AA u_\ell^{k+1}}_{\XX_\ell'}
        +
        \eta_\ell(u_\ell^{k+1})
        \\
        &\eqreff*{eq:convergence:linearization_contraction}\leq \,
        \norm{F - \AA u_\ell^{k+1}}_{\XX_\ell'}
        +
        \eta_\ell(u_\ell^{k}) + \CCstab \alpha^{-1} (1+\rr_0) \,
        \norm{F - \AA u_\ell^{k}}_{\XX_\ell'}
        \\
        &\eqreff*{eq:notMetxFalse}\le \,
        \norm{F - \AA u_\ell^{k+1}}_{\XX_\ell'}
        +
        \big[\lambdalin^{-1} + \CCstab \alpha^{-1} (1+\rr_0) \big] \,
        \norm{F - \AA u_\ell^{k}}_{\XX_\ell'}
        \\
        &\eqreff*{eq:damping:guaranteedContraction}\leq \,
        \big[ 1 + \lambdalin^{-1} \rr_0^{-1} + \CCstab \alpha^{-1} (\rr_0^{-1} + 1) \big]\, \rr_0^{k + 1 - k'} \,
        \norm{F - \AA u_\ell^{k'}}_{\XX_\ell'}.
    \end{align*}
    This immediately verifies the quasi-contraction
    \begin{equation}
        \label{eq:quasierror:quasicontraction:case1}
        \Eta_\ell^{k+1}
        \lesssim
        \rr_0^{k+1-k'} \, \norm{F- \AA u_\ell^{k'}}_{\XX_\ell'}
        \leq
        \rr_0^{k+1-k'} \, \Eta_\ell^{k'}
        \quad \text{ for all } 0 < \kmin \leq k' \leq k < \kmax[\ell]
    \end{equation}
    and further applications of the guaranteed reduction~\eqref{eq:damping:guaranteedContraction} result in
    \begin{equation}
        \label{eq:quasierror:quasicontraction:case2}
        \Eta_\ell^{k+1}
        \lesssim
        \rr_0^{k+1} \, \norm{F- \AA u_\ell^0}_{\XX_\ell'}
        \leq
        \rr_0^{k+1} \, \Eta_\ell^0
        \quad \text{ for all } 0 < k_{\textup{min}} \leq k < \kmax[\ell].
    \end{equation}

    \emph{Step 6 (tail-summability with respect to \(k\)).}
    For a fixed mesh level \(\ell' \in \N_0\) with \((\ell', 0) \in \QQ\),
    recall that the stopping criterion~\eqref{algorithm:AILFEM:stopping} guarantees \(\kk[\ell'] \geq \kmin\).
    The argumentation for the summability in \(k\) differs depending on the relation of \(\kmin\)
    and the smallest index \(0 \leq k' \leq \kk[\ell']\) in the sum.
    If \(k' \geq \kmin\), the estimate~\eqref{eq:quasierror:quasicontraction:case1}
    and the convergence of the geometric series yield
    \begin{equation*}
        \sum_{k = k' + 1}^{\kk[\ell']} \Eta_{\ell'}^{k}
        =
        \sum_{k = k'}^{\kk[\ell']} \Eta_{\ell'}^{k+1}
        \eqreff{eq:quasierror:quasicontraction:case1}\lesssim
        \bigg(
            \sum_{k = k'}^{\kk[\ell']} \rr_0^{k-k'}
        \bigg) \Eta_{\ell'}^{k'}
        \lesssim
        \Eta_{\ell'}^{k'}.
    \end{equation*}
    If \(k' < \kmin\), the application
    of~\eqref{eq:quasierror:quasicontraction:case1} for the indices \(\kmin < k < \infty\) and
    of~\eqref{eq:quasierror:contraction_in_linearization} for the indices \(0 \leq k \leq \kmin\)
    result in
    \begin{align*}
        \sum_{k = k' + 1}^{\kk[\ell']} \Eta_{\ell'}^{k}
        =
        \!\!
        \sum_{k = k' + 1}^{\kmin}
        \!\!
        \Eta_{\ell'}^{k}
        +
        \!\!\!
        \sum_{k = \kmin + 1}^{\kk[\ell']}
        \!\!\!
        \Eta_{\ell'}^{k}
        &\eqreff*{eq:quasierror:quasicontraction:case1}\lesssim
        \sum_{k = k' + 1}^{\kmin}
            \!\!
        \Eta_{\ell'}^{k}
        +
        \bigg(
            \sum_{k = \kmin + 1}^{\kk[\ell']} \!\!\!\!\!
            \rr_0^{k-\kmin}
        \bigg)
        \Eta_{\ell'}^{\kmin}
        \lesssim
        \sum_{k = k' + 1}^{\kmin} \Eta_{\ell'}^{k}
        \eqreff{eq:quasierror:contraction_in_linearization}\lesssim
        \Eta_{\ell'}^{k'}.
    \end{align*}
    The two previous displayed formulas hold for finite and infinite \(\kk[\ell']\) and thus establish
    \begin{equation}
        \label{eq:tailsummability:k}
        \sum_{k = k' + 1}^{\kk[\ell']} \Eta_{\ell'}^{k}
        \lesssim
        \Eta_{\ell'}^{k'}
        \quad \text{for all } (\ell', k') \in \QQ.
    \end{equation}

    \emph{Step~7 (stability of mesh-level change).}
    For $\ell>0$, note that Dörfler contraction~\eqref{eq:Doerfler_contraction} and the dual norm increase~\eqref{eq:dualNormIncrease}
    (and nested iteration $u_\ell^0 = u_{\ell-1}^\kmax$) yield
    \begin{equation*}
        \Eta_{\ell}^0
        =
        \norm{F-\AA u_\ell^0}_{\XX_\ell'} + \eta_\ell(u_\ell^0)
        \eqreff{eq:Doerfler_contraction}\lesssim
        \norm{F-\AA u_\ell^0}_{\XX_\ell'}
        +
        q_\theta \, \eta_{\ell-1}(u_{\ell-1}^{\kmax})
        \eqreff{eq:dualNormIncrease}\lesssim
        \Eta_{\ell-1}^\kmax
        +
        \enorm{u_{\ell}^\exact - u_{\ell-1}^\exact}.
    \end{equation*}
    Reliability~\eqref{axiom:reliability}, quasi-monotonicity~\eqref{eq:quasi-monotonicity}
    of the error estimator, stability~\eqref{axiom:stability} with constant \(\CCstab\) from~\eqref{eq:uniform_constants},
    and the linearization error estimate~\eqref{eq:linearization_estimator} prove
    \begin{align*}
        \enorm{u_\ell^\exact - u_{\ell-1}^\exact} \,
        &\eqreff*{axiom:reliability}\leq \,
        \Crel\,
        \big[
            \eta_\ell(u_\ell^\exact)
            +
            \eta_{\ell-1}(u_{\ell-1}^\exact)
        \big]
        \eqreff{eq:quasi-monotonicity}\leq
        \Crel (1 + \Cmon)\,
        \eta_{\ell-1}(u_{\ell-1}^\exact)
        \\
        &\eqreff*{axiom:stability}\leq \,
        \Crel (1 + \Cmon)\,
        \big[
            \eta_{\ell-1}(u_{\ell-1}^\kmax)
            +
            \CCstab \, \enorm{u_{\ell-1}^\exact - u_{\ell-1}^\kmax}
        \big]
        \\
        &\eqreff*{eq:linearization_estimator}\leq \,
        \Crel (1 + \Cmon)\,
        \big[
            \eta_{\ell-1}(u_{\ell-1}^\kmax)
            +
            \CCstab \alpha^{-1} \,
            \norm{F - \AA u_{\ell-1}^\kmax}_{\XX_{\ell-1}'}
        \big]
        \lesssim
        \Eta_{\ell-1}^{\kmax}.
    \end{align*}
    Combining the two previous displayed formulas yields the stability of the quasi-error under mesh refinement
    \begin{equation}
        \label{eq:quasierror:stability_in_refinement}
        \Eta_\ell^0
        \lesssim
        \Eta_{\ell-1}^{\kmax}
        \quad\text{for all } \ell \in \QQ \text{ with } \ell > 0.
    \end{equation}

    \emph{Step~8 (tail-summability with respect to \(\ell\) and \(k\)).}
  The conclusion of the proof of tail-summability (and thus also full R-linear convergence)
        \[
        \sum_{\substack{(\ell, k) \in \QQ \\ \abs{\ell', k'} < \abs{\ell,k}}}
        \Eta_{\ell}^{k}
        \lesssim
        \Eta_{\ell'}^{k'},
    \]
follows the argumentation found in~\cite[Theorem~2]{bfmps2025}. It employs the tail-summabilities~\eqref{eq:contractionFinalIterates} with respect to $\ell$ and~\eqref{eq:tailsummability:k} with respect to $k$ as well as the stability~\eqref{eq:quasierror:contraction_in_linearization} and the stability~\eqref{eq:quasierror:stability_in_refinement} of the mesh-level change. The reasoning needs to respect that the quasi-contraction~\eqref{eq:quasierror:quasicontraction:case2} is restricted to $k > \kmin$. It thus slightly differs depending on different cases for $\ell'$ and $\elll$, but contains only the mentioned arguments. Further details are omitted here, but a full argument is provided in Appendix~\ref{appendix:linearConvergence}.

\end{proof}

\subsection{Optimal convergence rates}\label{subsection:optimalConvergenceRates}
The formal statement of optimal convergence rates of the output sequence of Algorithm~\ref{algorithm:AILFEM}
employs the notion of nonlinear approximation classes as introduced in the context of adaptive finite element methods
by~\cite{bdd2004}.
For \(N \in \N_0\), we write \(\TT_\fine \in \T_N(\TT_\coarse)\)
if \(\TT_\fine \in \T(\TT_\coarse)\) and \(\# \TT_\fine - \# \TT_\coarse \le N\).
Let $u_{\rm opt}^\exact$ denote the exact discrete solution associated
with with respect to the unavailable optimal triangulation $\TT_{\rm opt} \in \T_N (\TT_0)$.
For any rate \(r > 0\), we say that $u^\exact$ belongs to the approximation class of rate $r>0$ if
\begin{equation*}
    \norm{u^\exact}_{\mathbb{A}_r}
    \coloneqq
    \sup_{N \in \N_0}
    \bigl[
        (N + 1)^r \min_{\TT_{\rm opt} \in \T_N }
        \eta_{\rm opt}(u^\exact_{\rm opt})
    \bigr] < \infty.
\end{equation*}
This means that the error estimator values on the optimally chosen meshes decrease at least with rate $- r$
with respect to the number \(N\) of additional triangles.
In order to take the cumulative nature of adaptive algorithms into account,
the following theorem replaces the number of elements \(\# \TT_\ell\) at one level \(\ell \in \N_0\) by the cumulative cost function
\begin{equation}
    \label{eq:cost}
    \mathtt{cost}(\ell, k)
    \coloneqq
    \sum_{\substack{(\ell', k') \in \QQ\\ |\ell', k'| \le |\ell, k| }} \# \TT_{\ell'}.
\end{equation}
\begin{theorem}[optimal complexity]
    \label{theorem:optimalRates}
    Suppose~\eqref{assump:stronglyMonotone}--\eqref{assump:derivativeLocallyLipschitz}
    as well as the axioms~\eqref{axiom:stability}--\eqref{axiom:discreteReliability}
    and quasi-orthogonality~\eqref{axiom:quasiorthogonality}.
    Let the minimal number \(\kmin \in \N\) of Newton iterations be chosen such that~\eqref{eq:Newton:condition_k0} is satisfied and $\varepsilon >0$ as in Theorem~\ref{theorem:fullRLinearConvergence}.
    Let \(0 < \theta < 1\), $1 \le \Cmark <\infty$, and
    \(0 < \lambdalin < \lambdalin^\exact \coloneqq \min \{1, \alpha / \overline{C}_{\textup{stab}}\}\) with $\CCstab$ from\eqref{eq:uniform_constants} such that
    \begin{equation}
        \label{eq:thetamark}
        0 <
        \thetamark
        \coloneqq
        \frac{(\theta^{1/2}+ \, \lambdalin / \lambdalin^\exact)^2}{(1-\lambdalin / \lambdalin^\exact)^2}
        <
        \theta^\exact
        \coloneqq
        \frac1{1 + \overline{C}_{\textup{stab}}^2 \, \Crel^2}
        <
        1.
    \end{equation}
    Then, for all \(r > 0\), it holds that
    \begin{equation}
        \label{eq:optimal_complexity}
        \copt \, \norm{u^\exact}_{\mathbb{A}_r}
        \le
        \!\sup_{(\ell,k) \in \QQ} (\# \TT_\ell - \# \TT_0 +1)^r \, \Eta_\ell^k
        \le
\sup_{(\ell,k) \in \QQ}
        \! \mathtt{cost}(\ell, k)^r \,
        \Eta_\ell^{k}
        \le
        \Copt \,
        \max\{\norm{u^\exact}_{\mathbb{A}_r}, \, \Eta_0^0\}.
    \end{equation}
    The constant $\copt>0$ depends only on $\overline{C}_{\textup{stab}}$, $r$, and the use of NVB refinement,
    whereas \(\Copt>0\) depends on $q_0$, $\qred$, $\alpha$, \(\overline{C}_{\textup{stab}}\), \(\Crel\),
    \(\Cdrel\), \(\Cmark\), $\Clin$, $\qlin$, \(\# \TT_{0}\), \(r\).
\end{theorem}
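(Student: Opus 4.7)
The plan is to follow the now-standard optimal-complexity analysis for AFEMs (as developed in~\cite{axioms, ghps2021, bps2024, bfmps2025}), with modifications to accommodate the locally Lipschitz nonlinearity via the uniform bound from Lemma~\ref{lemma:boundFinalIterate}\textup{(ii)}. The three inequalities in~\eqref{eq:optimal_complexity} correspond to sub-goals of quite different nature: the middle inequality is trivial, the lower bound is a reformulation, and the upper bound is the core of the proof.

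The \emph{middle inequality} is immediate: the definition~\eqref{eq:cost} of the cost function gives $\#\TT_\ell - \#\TT_0 + 1 \le \#\TT_\ell \le \mathtt{cost}(\ell,k)$, and the claim follows by monotonicity of $t \mapsto t^r$. The \emph{lower bound} $\copt\,\norm{u^\exact}_{\mathbb{A}_r} \le \sup(\#\TT_\ell - \#\TT_0 + 1)^r\,\Eta_\ell^k$ is a reformulation of the approximation class norm: combining stability~\eqref{axiom:stability} with the uniform constant~\eqref{eq:uniform_constants}, the linearization estimate~\eqref{eq:linearization_estimator}, and the stopping criterion~\eqref{algorithm:AILFEM:stopping} one derives $\eta_\ell(u_\ell^\exact) \lesssim \Eta_\ell^\kmax$; together with the trivial inclusion $\TT_\ell \in \T_{\#\TT_\ell - \#\TT_0}(\TT_0)$ and the standard NVB-counting argument from~\cite[Section~4.6]{axioms}, this yields the claim.

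The \emph{upper bound} is the main work. The central ingredient is a \emph{comparison lemma}: for each $\ell$ with $(\ell,\kmax) \in \overline{\QQ}$, one constructs a refinement $\widehat{\TT}_\ell \in \T(\TT_\ell)$ by overlaying $\TT_\ell$ with a near-optimal mesh from $\mathbb{A}_r$ at tolerance proportional to $\eta_\ell(u_\ell^\exact)$, so that $\#\widehat{\TT}_\ell - \#\TT_\ell \lesssim \norm{u^\exact}_{\mathbb{A}_r}^{1/r}\,\eta_\ell(u_\ell^\exact)^{-1/r}$, and the subset $\TT_\ell \setminus \widehat{\TT}_\ell$ satisfies a Dörfler criterion $\theta\,\eta_\ell(u_\ell^\kmax)^2 \le \eta_\ell(\TT_\ell \setminus \widehat{\TT}_\ell, u_\ell^\kmax)^2$ for the computed iterate $u_\ell^\kmax$. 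The key estimate couples discrete reliability~\eqref{axiom:discreteReliability} with stability~\eqref{axiom:stability} (using the uniform constant $\CCstab$) and~\eqref{eq:linearization_estimator} at $u_\ell^\kmax$; the explicit choice of $\thetamark < \theta^\exact$ in~\eqref{eq:thetamark} is precisely what absorbs the perturbation from $u_\ell^\exact$ to $u_\ell^\kmax$, and the constraint $\lambdalin < \lambdalin^\exact$ ensures the linearization residual is small enough for the Dörfler threshold to be maintained. The quasi-minimality~\eqref{eq:doerfler} of $\MM_\ell$ then yields $\#\MM_\ell \lesssim \norm{u^\exact}_{\mathbb{A}_r}^{1/r}\,(\Eta_\ell^\kmax)^{-1/r}$.

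To conclude, the NVB mesh-closure estimate $\#\TT_\ell - \#\TT_0 \lesssim \sum_{\ell'<\ell}\#\MM_{\ell'}$ propagates the previous bound to individual mesh sizes, and full R-linear convergence from Theorem~\ref{theorem:fullRLinearConvergence} enables the geometric summation. Rewriting $\Eta_\ell^k \le C_{\textup{lin}}\,q_{\textup{lin}}^{|\ell,k|-|\ell',k'|}\,\Eta_{\ell'}^{k'}$ as $(\Eta_{\ell'}^{k'})^{-1/r} \le C_{\textup{lin}}^{1/r}\,q_{\textup{lin}}^{(|\ell,k|-|\ell',k'|)/r}\,(\Eta_\ell^k)^{-1/r}$ and summing over $(\ell',k') \in \QQ$ with $|\ell',k'| \le |\ell,k|$ contracts to a convergent geometric series, giving $\mathtt{cost}(\ell,k) \lesssim \max\{\#\TT_0,\,\norm{u^\exact}_{\mathbb{A}_r}^{1/r}\}\,(\Eta_\ell^k)^{-1/r}$, which rearranges to~\eqref{eq:optimal_complexity}. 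The expected principal obstacle is the comparison lemma in Step~3: the standard proof uses global Lipschitz constants which are not available here, so the uniform boundedness from Lemma~\ref{lemma:boundFinalIterate}\textup{(ii)} is essential to replace $\Cstab[\vartheta]$, $\Lup[\vartheta]$, and the C\'ea constant by the iteration-independent quantities from~\eqref{eq:uniform_constants}, while simultaneously the smallness of $\lambdalin$ must be carefully balanced against the marking parameter $\theta$ through~\eqref{eq:thetamark}.
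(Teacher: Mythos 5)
Your proposal is correct and follows essentially the same route as the paper. All four ingredients you identify (the comparison argument via \cite[Lemma~4.14]{axioms}, the perturbation that transfers the Dörfler criterion from $u_\ell^\exact$ to $u_\ell^\kmax$ using the threshold $\thetamark < \theta^\exact$ from~\eqref{eq:thetamark}, the NVB mesh-closure estimate, and the geometric summation enabled by full R-linear convergence) are exactly the components in the paper's proof, which isolates the perturbation step as an auxiliary estimator-equivalence lemma and splits the lower-bound argument into the cases $\lmax = \infty$ and $\lmax < \infty$.
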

The statement can be interpreted as follows: Given that $\norm{u^\exact}_{\mathbb{A}_r}< \infty$,
then Algorithm~\ref{algorithm:AILFEM} converges with the rate $-r$ in the sense that
\(\Eta_\ell^k \lesssim \mathtt{cost}(\ell, k)^{-r}\).
The proof of Theorem~\ref{theorem:optimalRates} follows standard perturbation arguments
and is therefore postponed to Appendix~\ref{appendix:optimalRates}.

\begin{remark}\label{remark:linearSolver}
    In general, the convergence analysis with rates of adaptive mesh-refining algorithms
    allows the inexact solution of the involved linear systems.
    In Algorithm~\ref{algorithm:AILFEM}, this concerns the linearized Newton system~\ref{algorithm:NAIL:NewtonUpdate} as well as
    of the Riesz problem~\eqref{eq:residual_Riesz} for the evaluation of the linearization error estimator.
    Following \cite{bfmps2025}, this results in optimal convergence rates with respect to the computational time.
    Since the proof of Lemma~\ref{lemma:adaptiveDamping} reveals that
    the number of trials in each Newton iteration is uniformly bounded,
    the computational cost from~\eqref{eq:cost} is a valid measure for the computational effort in each iteration
    under the assumption of linear costs for the linear solver.
    The inclusion of this notion into Theorem~\ref{theorem:optimalRates} is a first step into the direction
    of the analysis of the practical computational complexity of Algorithm~\ref{algorithm:AILFEM}.
    However, the explicit analysis of suitable contractive algebraic solvers
    with discerning stopping criteria is beyond the scope of this paper.
\end{remark}



\section{Application to semilinear elliptic PDEs}\label{section:applications}

This section applies the analysis from the Sections~\ref{section:linearization}--\ref{section:mainResults} to semilinear elliptic PDEs.
To this end, the key assumptions~\eqref{assump:stronglyMonotone}--\eqref{assump:derivativeLocallyLipschitz} are verified.

\subsection{Semilinear model problem}
Given a polyhedral bounded Lipschitz domain $\Omega \subset \R^d$ of dimension $d \in \{1, 2,3\}$ with conforming initial triangulation $\TT_0$,
consider the right-hand sides \(f \in L^2(\Omega)\) and piecewise Lipschitz continuous $\boldsymbol{f} \in [W^{1,\infty}(\TT_0)]^d$.
Suppose that the diffusion coefficient $\boldsymbol{A} \in [W^{1,\infty}(\TT_0)]^{d \times d}_{\textup{sym}}$
satisfies uniform two-sided eigenvalue bounds
\begin{equation}\label{eq:semilinear:ellipticity}
    0
    <
    \alpha_{\textup{min}}
    \le
    \boldsymbol{A}(x) \xi \cdot \xi
    \le
    \alpha_{\textup{max}}
    \quad \text{ for all } \xi \in \R^d \text{ with } \vert \xi \vert = 1
    \text{ and a.e.\ } x \in \Omega,
\end{equation}
where $|\,\cdot\,|$ is understood as the Euclidean norm. We equip the space \(\XX \coloneqq H^1_0(\Omega)\) with the norm $\enorm{u}^2 \coloneqq \langle \A \nabla u, \nabla u \rangle \coloneqq \langle \A \nabla u, \nabla u \rangle_{L^2(\Omega)}$ for \(u \in \XX\).
For simplicity, we assume that the convection coefficient $\boldsymbol{b} \in [W^{1,\infty}(\Omega)]^d$ satisfies \(- \div \boldsymbol{b} \geq 0\)
ensuring the ellipticity
\begin{equation}
    \label{eq:semilinear:ellipticity_linear_part}
    \enorm{u-v}^2
    \leq
    \dual{\boldsymbol{A} \nabla (u - v)}{\nabla (u - v)}
    +
    \dual{\boldsymbol{b}\cdot \nabla (u - v)}{u-v}
    \quad \text{ for all } u, v \in \XX.
\end{equation}
The boundedness of the coefficients $\boldsymbol{A}$ and $\boldsymbol{b}$ implies the estimate
\begin{equation}
    \label{eq:semilinear:boundedness_linear_part}
    \dual{\boldsymbol{A} \nabla u}{\nabla v}
    +
    \dual{\boldsymbol{b}\cdot \nabla u}{v}
    \lesssim
    \enorm{u} \, \enorm{v}
    \quad \text{ for all } u, v \in \XX.
\end{equation}
The assumptions on the nonlinear reaction term $c \colon \Omega \times \R \to \R$ are discussed below.
The semilinear elliptic PDE seeks \(u^\exact \in \XX\) such that
\begin{equation}
    \label{eq:semilinearStrongform}
    -\div(\A \nabla u^\exact) + \boldsymbol{b}\cdot \nabla u^\exact +  c(u^\exact) = f - \div \f
    \text{ \ in } \Omega
    \quad \text{ and } \quad
    u^\exact = 0 \text{ on }  \partial \Omega.
\end{equation}
Throughout this section, the abbreviation $c(u^\exact) \equiv c(\cdot, u^\exact(\cdot)) \colon \Omega \to \R$ applies.
The weak form of the semilinear model problem involves the operator $\AA \colon \XX \to \XX'$ with
\begin{equation}
    \label{eq:semilinear:operator}
    \AA u
    \coloneqq
    \dual{\A \nabla u}{\nabla \, \cdot \,} + \dual{\boldsymbol{b}\cdot \nabla u + c(u)}{\cdot \,}
    \quad \text{ for all } u \in \XX.
\end{equation}
Hence, the weak solution $u^\exact \in \XX$ to~\eqref{eq:semilinearStrongform} solves
\begin{equation}
    \label{eq:semilinearWeakform}
    \dual{\mathcal{A} u^\exact}{v}
    =
    \dual{f}{v}
    +
    \dual{\boldsymbol{f}}{v}
    \quad \text{ for all } v \in \XX.
\end{equation}
Following~\cite[Chapter III, (12)]{fk1980} and~\cite[(A1)--(A4)]{bhsz2011},
the nonlinearity $c \in C^n(\Omega \times \R)$  for all $n \in \N_0$ is supposed to satisfy the following three assumptions:
\begin{description}
    \item[\bfseries (CAR) Carathéodory function]
        \labeltext{CAR}{assump:Caratheodory}
		The $n$-th derivative $c^{(n)} \coloneqq \partial_{\xi}^n c$ with respect to the second argument satisfies that
		the function $x \mapsto \partial_{\xi}^n \, c(x,\xi)$ is measurable on $\Omega$ for all $\xi \in \R$
        and that $\xi \mapsto \partial_{\xi}^n \, c(x,\xi)$ is continuous for all $x \in \Omega$.
    \item[\bfseries (MON) Monotonicity of the reaction $c$]
        \labeltext{MON}{assump:monotonicity_c}
        Let $c^{(1)}(x,\xi) \ge 0$ for all $x \in \Omega$ and $\xi \in \R$.
    \item[\bfseries (GC) Growth condition]
        \labeltext{GC}{assump:growth_c}
        There exist $R > 0$ and $N \in \N$ such that
        \begin{equation*}
            |c^{(N)}(x,\xi)|
            \le
            R
            \quad \text{ for a.e. } x \in \Omega \text{ and all } \xi \in \R.
        \end{equation*}
        If \(d = 3\), then assume that this holds for some \(N \in \{2, 3\}\).
\end{description}
The assumption~\eqref{assump:monotonicity_c} ensures the monotonicity of \(c\) in the second argument, i.e.,
\begin{equation*}
    \big[ c(x, \xi_2) - c(x, \xi_1) \big] (\xi_2-\xi_1) \ge 0
    \quad \text{ for all } x \in \Omega \text{ and all } \xi_1, \xi_2 \in \R.
\end{equation*}
This and the ellipticity of the linear part from~\eqref{eq:semilinear:ellipticity}
verify the strong monotonicity assumption~\eqref{assump:stronglyMonotone} for the nonlinear operator $\AA$ from~\eqref{eq:semilinear:operator}.
Recall from \cite[Lemma~20]{bbimp2022cost} that the reaction term \(c\) satisfying~\eqref{assump:Caratheodory}
and~\eqref{assump:growth_c} is locally Lipschitz continuous with respect to $\xi$, i.e.,
for \(\vartheta > 0\), there exists a constant \(\widetilde L[\vartheta] > 0\) such that any
\(u, v, w \in \XX\) with \(\max\{\enorm{u}, \enorm{u - v}\} \le \vartheta\) satisfy
\[
    \langle c(u) - c(v), w \rangle
    \leq
    \widetilde L[\vartheta]\,
    \enorm{u - v} \enorm{w}.
\]
This and the boundedness of the linear part from~\eqref{eq:semilinear:boundedness_linear_part}
prove the local Lipschitz continuity~\eqref{assump:locallyLipschitz} for $\AA$.
Hence, the assumptions from subsection~\ref{section:nonlinearModelProblem} are satisfied and
the Browder--Minty theorem guarantees well-posedness of the weak formulation~\eqref{eq:semilinearWeakform}.
The same applies to the discrete problem using the conforming finite-element ansatz space of fixed polynomial degree $p \in \N$
\[
    \XX_\coarse
    \coloneqq
    \mathcal{S}_0^p(\TT_\coarse)
    \coloneqq
    \set{v_\coarse \in H_0^1(\Omega) : \forall \,T \in \TT_\coarse\colon \, v_\coarse|_T \text{ is a polynomial of degree} \le p}
    \subset
    \XX.
\]

The growth condition~\eqref{assump:growth_c} allows to establish
the local Lipschitz continuity~\eqref{assump:derivativeLocallyLipschitz} of the Fréchet derivative $\d\AA\colon \XX \to \mathcal{L}(\XX, \XX'), w \mapsto \d\AA[w]$ with
\begin{equation}\label{eq:semilinearDerivative}
    \d\AA[w] \colon \XX \to \XX',
    \quad
    v \mapsto \dual{\A \nabla v}{\nabla \cdot}_\Omega + \dual{\boldsymbol{b}\,\cdot\nabla v + c'(w)\, v}{\cdot \,}_{\Omega}.
\end{equation}

\begin{lemma}\label{lemma:difference}
    Under the assumptions on the semilinear model problem~\eqref{eq:semilinear:ellipticity},~\eqref{assump:Caratheodory},~\eqref{assump:monotonicity_c}, and~\eqref{assump:growth_c} and for $0< \vartheta$, there exists \(\textup{M}[\vartheta] > 0\)
    such that all \(v, w, z \in \XX\) with \(\max\{\enorm{v}, \enorm{v - w}\} \le \vartheta\) satisfy
    \begin{equation}
        \label{eq:dualCriticalLipschitz}
        \norm{(\d{\AA}[v] - \d{\AA}[w]) z}_{\XX'}
        \le
        \mathrm{M}[\vartheta] \enorm{v - w} \enorm{z}.
    \end{equation}
    The local Lipschitz constant $\mathrm{M}[\vartheta]$ depends only on
    $\vartheta, |\Omega|, d, p, N, R, \alpha_{\textup{min}}$, and \(\alpha_{\textup{max}}\).
\end{lemma}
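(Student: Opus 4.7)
The plan is to reduce everything to estimating the nonlinear reaction term and then to combine Taylor expansion with Sobolev embeddings.

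\textbf{Step 1 (cancellation of the linear part).} Inspecting the formula~\eqref{eq:semilinearDerivative} for $\d\AA$, the diffusion tensor $\A$ and the convection field $\boldsymbol{b}$ do not depend on the linearization point, so
\[
    \bigl((\d\AA[v] - \d\AA[w]) z\bigr)(\varphi)
    = \int_\Omega \bigl(c'(v) - c'(w)\bigr)\, z\, \varphi \, \d x
    \quad \text{for all } \varphi \in \XX.
\]
Hence it suffices to bound this integral by $\mathrm{M}[\vartheta] \enorm{v - w}\enorm{z}\enorm{\varphi}$ when $\enorm{\varphi} = 1$.

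\textbf{Step 2 (pointwise bound on $c'(v) - c'(w)$).} Iterating the fundamental theorem of calculus $N-2$ times (starting from the assumption $|c^{(N)}| \le R$) and using that $c^{(j)}(\cdot,0)$ is bounded on $\Omega$ for $j = 2,\dots,N-1$, I would first establish the growth estimate
\[
    |c''(x,\xi)| \le C\bigl(1 + |\xi|^{N-2}\bigr) \quad \text{for a.e. } x \in \Omega,\ \xi \in \R,
\]
with a constant depending only on $R$, $|\Omega|$, and $N$. A further application of the mean value theorem (with $\xi_t = w + t(v-w)$, which satisfies $|\xi_t| \le \max\{|v|, |w|\}$) then yields the pointwise bound
\[
    |c'(v(x)) - c'(w(x))|
    \le C\bigl(1 + |v(x)|^{N-2} + |w(x)|^{N-2}\bigr)\, |v(x) - w(x)|.
\]

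\textbf{Step 3 (Hölder and Sobolev).} Combining Step~1 and Step~2, I need to bound
\[
    \int_\Omega \bigl(1 + |v|^{N-2} + |w|^{N-2}\bigr)\, |v - w|\, |z|\, |\varphi| \, \d x.
\]
I would split this with generalized Hölder into four factors and apply the Sobolev embedding $H^1_0(\Omega) \hookrightarrow L^q(\Omega)$. Because $\enorm{\,\cdot\,}$ is equivalent to the full $H^1_0$-norm by~\eqref{eq:semilinear:ellipticity} and Poincaré, each $L^q$ factor is controlled by the energy norm, provided $q \le 2^* = 2d/(d-2)$ in dimension $d \ge 3$. For $d \in \{1, 2\}$ any finite $q$ is admissible, so every $N \in \N$ is covered. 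For $d = 3$ the admissible exponent is $q = 6$, and the choice $1/6 + 1/6 + 1/6 + 1/2 = 1$ closes the estimate for $N \in \{2,3\}$: the power $(N-2) \in \{0,1\}$ in the first factor keeps it inside $L^6$. This is precisely where the restriction in~\eqref{assump:growth_c} enters.

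\textbf{Step 4 (collect constants).} Using $\enorm{v} \le \vartheta$ together with $\enorm{w} \le \enorm{v} + \enorm{v - w} \le 2\vartheta$, the $L^6$-norms of $v$ and $w$ are bounded by $C_{\mathrm{Sob}}\,\vartheta$. Absorbing all constants into a single
\[
    \mathrm{M}[\vartheta] \coloneqq C(|\Omega|, d, N, R, \alpha_{\textup{min}}, \alpha_{\textup{max}})\,\bigl(1 + \vartheta^{N-2}\bigr)
\]
concludes~\eqref{eq:dualCriticalLipschitz}. The polynomial degree $p$ enters only implicitly (through discrete subspaces in later use), but plays no role here.

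I expect the main obstacle to be Step~3 in the borderline three-dimensional case $N = 3$: the Hölder exponents need to be chosen so that the four factors $(1 + |v| + |w|)$, $(v - w)$, $z$, and $\varphi$ all land in Lebesgue spaces below the critical Sobolev exponent, which is the tight constraint that forces $N \le 3$ when $d = 3$.
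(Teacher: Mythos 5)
Your proof is correct and, while it lands on the same Hölder--Sobolev mechanism, it organizes the nonlinear estimate differently from the paper. The paper Taylor-expands $c'(v)$ around $w$ with integral remainder (eq.~\eqref{eq2:primal:taylorexp:dual}), keeps the full sum $\sum_{n=2}^{N-1} c^{(n)}(w)(v-w)^{n-1}/(n-1)!$ plus the order-$N$ remainder, and then applies two nested Hölder inequalities: an outer one (eq.~\eqref{eq:Lipschitz}) peeling off the three factors $\|c'(v)-c'(w)\|_{L^{t''}}\,\|z\|_{L^t}\,\|\varphi\|_{L^t}$, and an inner one (eq.~\eqref{eq:semilinear:holder}) splitting each $c^{(n)}(w)\,(v-w)^{n-1}$ product within the $L^{t''}$-norm, with the individual exponents chosen so that $t_1(N-n) = t_2(n-1) = t''(N-1)$. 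You instead apply the mean value theorem (or, to avoid measurability quibbles, $c'(v)-c'(w)=(v-w)\int_0^1 c''(w+t(v-w))\,\mathrm{d}t$) once to get a \emph{pointwise} bound $|c'(v)-c'(w)|\lesssim (1+|v|^{N-2}+|w|^{N-2})|v-w|$ from the growth bound $|c''|\lesssim 1+|\xi|^{N-2}$, and then close with a single four-factor Hölder; the boundary case $d=3$, $N=3$ is handled by the split $1/6+1/6+1/6+1/2=1$, which mirrors the paper's choice $t=6$, $t''=3/2$. Both routes hinge on the same growth estimate for intermediate derivatives (yours at the pointwise level, the paper's through the $L^{t_1}$-norms of $c^{(n)}(w)$ via~[bbimp2022cost, Eq.~(67)]), both exploit the same Sobolev window that forces $N\le 3$ for $d=3$, and both produce a constant $\mathrm{M}[\vartheta]\asymp 1+\vartheta^{N-2}$. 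What your version buys is brevity and a single Hölder application; what the paper's buys is that the term-by-term Taylor structure makes explicit which intermediate derivatives $c^{(n)}$ actually enter the estimate, which is convenient when one wants to trace all constants. No gap on your side — the implicit requirement that $c^{(j)}(\cdot,0)$ be controlled for $2\le j\le N-1$, which you invoke when integrating $c^{(N)}$ down to $c''$, is exactly the same tacit hypothesis the paper relies on through the cited remark.
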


\begin{proof}
    The proof is subdivided into five steps.

    \emph{Step~1.}
    For arbitrary $2 < t < \infty$, choose $t'' \coloneqq t / (t-2) > 0$ such that $1 = 1/{t''} + 2/t$
    and recall the generalized H\"{o}lder inequality, e.g., from \cite[Section~2.2]{kof1977},
    \begin{equation}
        \label{eq:generalizedHolder}
        \dual{\gamma \varphi}{\psi}
        \leq
        \norm{\gamma}_{L^{t''}(\Omega)} \,
        \norm{\varphi}_{L^t(\Omega)} \,
        \norm{\psi}_{L^t(\Omega)}
        \quad \text{ for all } \gamma \in L^{t''}(\Omega) \text{ and } \varphi, \psi \in L^t(\Omega).
    \end{equation}

    \emph{Step~2.}
    The Sobolev embedding $H^1_0(\Omega) \hook L^r(\Omega)$ is continuous for arbitrary \(1 \leq r < \infty\)
    if \(d \in \{1, 2\}\) and for \(1 \leq r \leq 6\) if \(d = 3\); see, e.g., \cite[Theorem~16.6]{fk1980}.
    In the case $d \in \{1,2\}$, let $2 < t < \infty$ and $N \in \N$ with \(N \geq 2\) be arbitrary.
    In the case $d=3$, Assumption~\eqref{assump:growth_c} asserts that \(N \in \{2,3\}\)
    and the choice $t = 6$ leads to $t'' = t/(t-2) = 3/2$ ensuring \((N-1)t'' \leq 6\).
    Hence, for all cases \(d \in \{1, 2, 3\}\), the Sobolev embedding gives
    \begin{equation}
        \label{eq:semilinear:sobolev}
        \norm{v}_{L^{(N-1)t''}(\Omega)}
        \lesssim
        \norm{v}_{H_0^1(\Omega)}
        \eqsim
        \enorm{v}
        \quad\text{and}\quad
        \norm{v}_{L^{t}(\Omega)}
        \lesssim
        \norm{v}_{H_0^1(\Omega)}
        \eqsim
        \enorm{v}
        \quad \text{ for all } v \in \XX.
    \end{equation}

    \emph{Step~3.}
    Recall from~\cite[Remark~19{\textup{(ii)}}]{bbimp2022cost} that
    Assumptions~\eqref{assump:Caratheodory} and~\eqref{assump:growth_c} imply
    \(c'(v) \in L^{t''}(\Omega)\) for all \(v \in \XX\) with
    \begin{equation}
        \label{eq:estimateDerivatives}
        \norm{c'(v)}_{L^{t''}(\Omega)}
        \le
        (N-1)\, \big(1 + \norm{v}_{L^{(N-1)t''}(\Omega)}^{N-1} \big)
        \eqreff{eq:semilinear:sobolev}\lesssim
        (N-1) \, ( 1 + \enorm{v}^{N-1}).
    \end{equation}
    This allows to apply the generalized Hölder inequality~\eqref{eq:generalizedHolder} and
    the Sobolev embedding~\eqref{eq:semilinear:sobolev} to establish
    \begin{equation}
        \label{eq:Lipschitz}
        \begin{split}
            \norm{(\d{\AA}[v] - \d{\AA}[w]) z}_{\XX'}
            &=
            \sup_{\varphi \in \XX, \enorm{\varphi}=1}
            \dual{[c'(v)- c'(w)] z}{\varphi}
            \\
            &\eqreff*{eq:generalizedHolder}\lesssim
            \sup_{\varphi \in \XX, \enorm{\varphi}=1}
            \norm{c'(v)- c'(w)}_{L^{t''}(\Omega)}\,
            \norm{z}_{L^t(\Omega)} \,
            \norm{\varphi}_{L^t(\Omega)}
            \\
            &\eqreff*{eq:semilinear:sobolev}\lesssim
            \norm{c'(v)- c'(w)}_{L^{t''}(\Omega)}\,
            \enorm{z}.
        \end{split}
    \end{equation}

    \emph{Step~4.}
    Given \(1 \leq t'' < \infty\) and \(N \in \N\) with \(N \geq 2\) as chosen in Step~2,
    for any $0 \le n \le N$, let \(1 \leq t_1, t_2 < \infty\) satisfy
    \begin{equation*}
        \frac{1}{t''}
        =
        \frac{1}{t''} \Big(\frac{N-n}{N-1} + \frac{n-1}{N-1}\Big)
        \eqqcolon
        \frac{1}{t_1} + \frac{1}{t_2}.
    \end{equation*}
    The H\"older inequality for $\varphi, \psi \in \XX$
    and the equalities $t_1 (N-n) = t_2 (n-1) = t''(N-1)$ prove
    \begin{equation}
        \label{eq:semilinear:holder}
        \begin{split}
            \norm{\varphi^{N-n} \, \psi^{n-1}}_{L^{t''}(\Omega)}
            \le
            \norm{\varphi^{N-n}}_{L^{t_1}(\Omega)}\,
            \norm{\psi^{n-1}}_{L^{t_2}(\Omega)}
            &=
            \norm{\varphi^{N-n}}_{L^{t_1}(\Omega)}\,
            \norm{\psi}_{L^{t_2(n-1)}(\Omega)}^{n-1}
            \\
            &=
            \norm{\varphi^{N-n}}_{L^{t_1}(\Omega)}\,
            \norm{\psi}_{L^{t''(N-1)}(\Omega)}^{n-1}
        \end{split}
    \end{equation}

    \emph{Step~5.}
    Let \(v, w \in \XX\) with \(\max\{\enorm{v}, \enorm{v - w}\} \leq \vartheta\).
    The smoothness assumption~\eqref{assump:Caratheodory} and a Taylor expansion in $w \in \XX$ show
    \begin{equation}
        \label{eq2:primal:taylorexp:dual}
        c'(v)
        =
        \sum_{n=1}^{N-1} c^{(n)}(w) \,
        \frac{(v -w)^{n-1}}{(n-1)!}
        +
        \frac{(v - w)^{N-1}}{(N-2)!} \,
        \int_0^1 (1 - s)^{N-2} \, c^{(N)}\big(w + s \, (v - w) \big) \d{s}.
    \end{equation}
    The triangle inequality and the growth condition~\eqref{assump:growth_c} yield
    \begin{equation}
        \label{eq:taylorEstimate}
        \norm{c'(v )-c'(w)}_{L^{t''}(\Omega)}
        \eqreff{assump:growth_c}\lesssim \,
        \sum_{n=2}^{N-1} \norm{c^{(n)}(w)\, (v -w)^{n-1}}_{L^{t''}(\Omega)}
        +
        \norm{(v -w)^{N-1}}_{L^{t''}(\Omega)}.
        \end{equation}
    The application of the H\"older inequality~\eqref{eq:semilinear:holder}
    with $\psi = v - w$ and $\varphi^{N-n} = c^{(n)}(w)$ reveals
    \begin{equation*}
        \norm{c'(v) - c'(w)}_{L^{t''}(\Omega)}
        \eqreff{eq:semilinear:holder}\lesssim
        \sum_{n=2}^{N-1}
        \norm{c^{(n)}(w)}_{L^{t_1}(\Omega)}
        \norm{v - w}_{L^{t''(N-1)}(\Omega)}^{n-1}
        +
        \norm{v -w}_{L^{t''(N-1)}(\Omega)}^{N-1}.
    \end{equation*}
    For all $n \in \{2, \dots, N-1\}$,
    choose $t_1$ such that $t_1 (N-n) = t''(N-1)$. Recall from~\cite[Equation~(67)]{bbimp2022cost} that
    the Sobolev embedding~\eqref{eq:semilinear:sobolev} and the definition of $t_1$ yield
    \begin{equation*}
        \norm{c^{(n)}(w)}_{L^{t_1}(\Omega)}
        \lesssim
        (N-n) \, (1 + \enorm{w}^{N-n})
        \le
        (N-n) (1 + \vartheta^{N-n}).
    \end{equation*}
    The combination of the two previous displayed formulas
    with~\eqref{eq:semilinear:sobolev} reads
    \begin{align*}
        \norm{c'(v) - c'(w)}_{L^{t''}(\Omega)}
        &\lesssim
        \Big(\sum_{n=2}^{N} (N-n)\,(1 + \vartheta^{N-n})\, \vartheta^{n-2} \Big)
        \, \enorm{v - w}
        \eqqcolon
        \mathrm{M}[\vartheta]\, \enorm{v -w}.
    \end{align*}
    Norm equivalence and~\eqref{eq:Lipschitz} finish the proof of local Lipschitz continuity.
\end{proof}

\subsection{A~posteriori error estimator}

The local contributions to the residual error estimator read
\begin{equation}
    \label{eq:estimator:primal}
    \begin{aligned}
        \eta_\coarse(T, v_\coarse)^2
        &\coloneqq
        h_T^2 \,\norm{f + \div(\A \, \nabla v_\coarse - \f) - \boldsymbol{b}\cdot \nabla v_\coarse - c(v_\coarse)}_{L^2(T)}^2
        \\
        &\phantom{{}\coloneqq{}}
        +
        h_T \, \norm{\jump{(\A \, \nabla v_\coarse - \f ) \, \cdot \, \n}}_{L^2(\partial T \cap \Omega)}^2,
    \end{aligned}
\end{equation}
where $\jump{\,\cdot\,}$ denotes the jump across edges (for $d=2$) resp.\ faces (for $d=3$) and
$\n$ denotes the corresponding unit normal vector.
For $d=1$, these jumps vanish, i.e., $\jump{\,\cdot\,} = 0$.
For the case \(\boldsymbol{b} = 0\), it has been proven in~\cite[Proposition~15]{bbimp2022}
that \(\eta_\coarse(\cdot)\) satisfies the axioms~\eqref{axiom:stability}--\eqref{axiom:discreteReliability}
from Section~\ref{subsection:axioms}.
For general convection coefficients \(\boldsymbol{b} \in [W^{1,\infty}(\Omega)]^d\),
the proofs apply verbatim.
It remains to verify the quasi-orthogonality~\eqref{axiom:quasiorthogonality}.
We distinguish the symmetric and the non-symmetric case.

\subsubsection*{Case 1: Energy minimization}

If $\boldsymbol{b} = \boldsymbol{0}$, the semilinear model problem~\eqref{eq:semilinearWeakform}
can be characterized as an energy minimization problem for the energy functional \(\EE\colon \XX \to \R\) with
\begin{equation}\label{eq:semilinearEnergy}
    \EE(v)
    \coloneqq
    \frac{1}{2} \int_\Omega |\boldsymbol{A}^{1/2}\nabla v|^2 \d x
    +
    \int_\Omega \int_0^{v(x)} c(s) \d s \d x
    -
    \int_\Omega fv \d x
    -
    \int_\Omega \boldsymbol{f} \cdot \nabla v \d x.
\end{equation}
This energy is well-defined by the assumptions on the nonlinearity; cf~\cite[Section~3.6]{bbimp2022cost}.
It is well-known (see, e.g.,~\cite[Lemma~5.1]{ghps2018})
that the energy difference can be employed as a measure for the discretization error for nested spaces $\XX_\coarse \subseteq \XX_\fine$ (where also $\XX_\fine = \XX$ is admissible) and $\Bup_0$ from~\eqref{eq:exact:bounded}:
\begin{equation*}
    \frac{\alpha}{2}\,
    \enorm{u_\coarse^\exact - u_\fine^\exact}^2
    \le
    \EE(u_\coarse^\exact) - \EE(u_{\fine}^\exact)
    \le
    \frac{\Lup[\textup{B}_0]}{2}\,
    \enorm{u_\coarse^\exact - u_\fine^\exact}^2.
\end{equation*}
This, the telescoping nature of the energy differences, the straight-forward equality, for any refinement \(\TT_\fine\) of \(\TT_\coarse\),
\begin{equation*}
    \EE(u_\coarse^\exact) - \EE(u^\exact)
    =
    \bigl[ \EE(u_\coarse^\exact) - \EE(u_\fine^\exact) \bigr]
    +
    \bigl[ \EE(u_\fine^\exact) - \EE(u^\exact) \bigr],
\end{equation*}
reliability~\eqref{axiom:reliability}, and estimator monotonicity~\eqref{eq:quasi-monotonicity}
imply quasi-orthogonality~\eqref{eq:quasiorthogonality} with \(\varepsilon = 0\)
and \(C_{\mathrm{orth}} = \Crel^2 \, (1+\Cmon^2) \, \Lup[\Bup_0] / \alpha\).

\subsubsection*{Case 2: With convection}
If $\boldsymbol{b} \neq \boldsymbol{0}$,
the quasi-orthogonality follows from a compact-perturbation argument.
Let $\XX_\infty \coloneqq\mathrm{closure}\big( \bigcup_{\ell=0}^\lmax \XX_\ell \big)$
be the closure with respect to the norm \(\enorm{\,\cdot\,}\).
Let $u_\infty^\exact \in \XX_\infty$ denote the corresponding Galerkin solution.

First, recall that reliability transfers to the discrete limit space $\XX_\infty$ in that
there exists $C_{\textup{rel}}' > 0$ such that
\begin{equation}
    \label{eq:reliabilityLimitSpace}
    \enorm{u_\infty^\exact - u_\ell^\exact}
    \le
    C_{\textup{rel}}' \, \eta_\ell(u_\ell^\exact)
    \quad \text{ for all } 0 \le \ell \le \lmax.
\end{equation}

Second, the arguments from~\cite[Lemma~29]{bbimp2022} applied to the discrete limit space reveal that,
for all $0 < \sigma < 1$, there exists an index $\ell_0 = \ell_0[\sigma] \in \N$ such that
the quasi-Pythagorean identity holds
\begin{equation}
    \label{eq:quasiPythagorean}
    \enorm{u_\infty^\exact - u_{\ell+1}^\exact}^2
    +
    \enorm{u_{\ell+1}^\exact - u_\ell^\exact}^2
    \le
    \frac{1}{1-\sigma} \, \enorm{u_\infty^\exact - u_\ell^\exact}^2
    \quad \text{ for all } \ell_0 \le \ell < \lmax.
\end{equation}
The choice $0 < \sigma \coloneqq \varepsilon / \bigl(\varepsilon + (\Crel')^2\bigr) < 1$
and, hence, $\varepsilon = (C_{\textup{rel}}')^2 \sigma/(1-\sigma)$ together with reliability~\eqref{eq:reliabilityLimitSpace} show
\begin{equation}
    \label{eq:QPImpliesQOStep1}
    \frac{1}{1-\sigma} \,
    \enorm{u_\infty^\exact - u_\ell^\exact}^2
    -
    \varepsilon\, \eta_\ell(u_\ell^\exact)^2
    \eqreff{eq:reliabilityLimitSpace}\le
    \frac{1}{1-\sigma} \,
    \enorm{u_\infty^\exact - u_\ell^\exact}^2
    -
    \frac{\sigma}{1-\sigma} \,
    \enorm{u_\infty^\exact - u_\ell^\exact}^2
    =
    \enorm{u_\infty^\exact - u_\ell^\exact}^2.
\end{equation}
Therefore, a combination of the estimates~\eqref{eq:quasiPythagorean}--\eqref{eq:QPImpliesQOStep1} proves,
for all $\ell_0 \le \ell < \lmax$,
\begin{equation}\label{eq:QPImpliesQOStep2}
    \begin{aligned}
        \enorm{u_{\ell+1}^\exact - u_\ell^\exact}^2 - \varepsilon \eta_\ell(u_\ell^\exact)^2
        \, \,
        &\eqreff*{eq:quasiPythagorean}\le \, \,
        \frac{1}{1-\sigma} \,
        \enorm{u_\infty^\exact - u_\ell^\exact}^2
        -
        \enorm{u_\infty^\exact - u_{\ell+1}^\exact}^2
        -
        \varepsilon\, \eta_\ell(u_\ell^\exact)^2
        \\
        &\eqreff*{eq:QPImpliesQOStep1}\le \, \,
        \enorm{u_\infty^\exact - u_\ell^\exact}^2 - \enorm{u_\infty^\exact - u_{\ell+1}^\exact}^2.
    \end{aligned}
\end{equation}
Hence, the telescoping sum 
concludes the proof of~\eqref{eq:quasiorthogonality} with, for all \(\ell_0 \le \ell' < \elll-1\),
\begin{equation}
    \label{eq:summability_remainder}
    \begin{split}
        \sum_{\ell = \ell'}^{\elll-1}
        \big[ \enorm{u_{\ell+1}^\exact - u_\ell^\exact}^2 - \varepsilon \eta_\ell(u_\ell^\exact)^2 \big] \, \,
        &\eqreff*{eq:QPImpliesQOStep2}
        \le  \, \, \,
        \sum_{\ell = \ell'}^{\elll-1}
        \big[
        \enorm{u_\infty^\exact - u_\ell^\exact}^2 - \enorm{u_\infty^\exact - u_{\ell+1}^\exact}^2
        \big]
        \le
        \enorm{u_\infty^\exact - u_{\ell'}^\exact}^2.
         \end{split}
\end{equation}
Since $\XX_k \subseteq \XX_\infty$, the Céa lemma~\eqref{eq:cea} with $\Bup_0$ from~\eqref{eq:exact:bounded} proves
\[
\enorm{u_\infty^\exact-u_\ell^\exact}
    \eqreff{eq:cea}\le
    (1 + L[\Bup_0]/\alpha) \, \enorm{u^\exact - u_\ell^\exact}
    \eqreff{axiom:reliability}\le
    (1 + L[\Bup_0]^2/\alpha^2) \, \Crel \, \eta_\ell(u_\ell^\exact)
    \quad \text{ for all } \ell_0 \le \ell' < \elll-1.
\]
This concludes the proof of~\eqref{eq:quasiorthogonality} with \(C_{\mathrm{orth}} = (1 + L[\Bup_0]/\alpha)^2 \, \Crel^2\).

\begin{remark}
    In Case~2,
    the quasi-Pythagorean estimate~\eqref{eq:quasiPythagorean} holds only asymptotically for $\ell \ge \ell_0$ sufficiently large.
    However, all main results on full R-linear convergence (Theorem~\ref{theorem:fullRLinearConvergence} above)
    and optimal complexity (Theorem~\ref{theorem:optimalRates} above)
    generalize accordingly and hold for $\ell_0 \le \ell \le \lmax$.
\end{remark}

\subsection{Numerical experiments}
\label{sec:semilinear:numerics}

This section is devoted to the empirical investigation of Algorithm~\ref{algorithm:AILFEM}.
The benchmark problem considers the L-shaped domain $\Omega = (-1,1)^2 \setminus [0,1)^2$ with constant right-hand side $f \equiv 2$,
a constant convection $\boldsymbol{b} \in \R^2$, and
a nonlinear reaction term $c(u) = \sum_{n=0}^N (40u)^n/n! \eqqcolon \exp_N(40u) \approx \exp(40u)$ with $N=11$.
Let the unknown exact solution $u^\exact \in H_0^1(\Omega)$ satisfy
\begin{equation}
    \label{eq:semilinear:experiment:weakform}
    \dual{\nabla u^\exact}{\nabla v}_\Omega
    +
    \dual{\boldsymbol{b}\cdot \nabla u^\exact
    +
    c(u^\exact)}{v}_\Omega
    =
    \dual{2}{v}_\Omega
    \quad \text{ for all } v \in H_0^1(\Omega).
\end{equation}

\subsubsection*{Case 1: Energy minimization}
In the first experiment, the choice of $\boldsymbol{b} = \boldsymbol{0}$ allows to interpret~\eqref{eq:semilinear:experiment:weakform}
as the first-order optimality condition of the minimization of the convex energy functional
\[
    \mathcal{E}(u)
    =
    \frac{1}{2} \int_\Omega |\nabla u|^2 \d x
    +
    \int_\Omega \exp_{N+1}(40u) / 40 \d x
    -
    \int_\Omega 2u \d x.
\]
Figure~\ref{fig:semilinear:mesh_solution} shows the adaptively refined mesh and a piecewise quadratic approximation to the solution \(u^\exact\).
The mesh plot in Figure~\ref{fig:semilinear:mesh} exhibits increased refinement at the reentrant corner
as well as moderate refinement at the boundary layer due to the comparably large gradient in this region.
\begin{figure}
    \centering
    \begin{subfigure}{0.47\textwidth}
        \begin{tikzpicture}
    \begin{axis}[%
        axis equal image,%
        width=8.0cm,%
        xmin=-1.15, xmax=1.15,%
        ymin=-1.15, ymax=1.15,%
        font=\footnotesize%
    ]
        \addplot graphics [xmin=-1, xmax=1, ymin=-1, ymax=1]
        {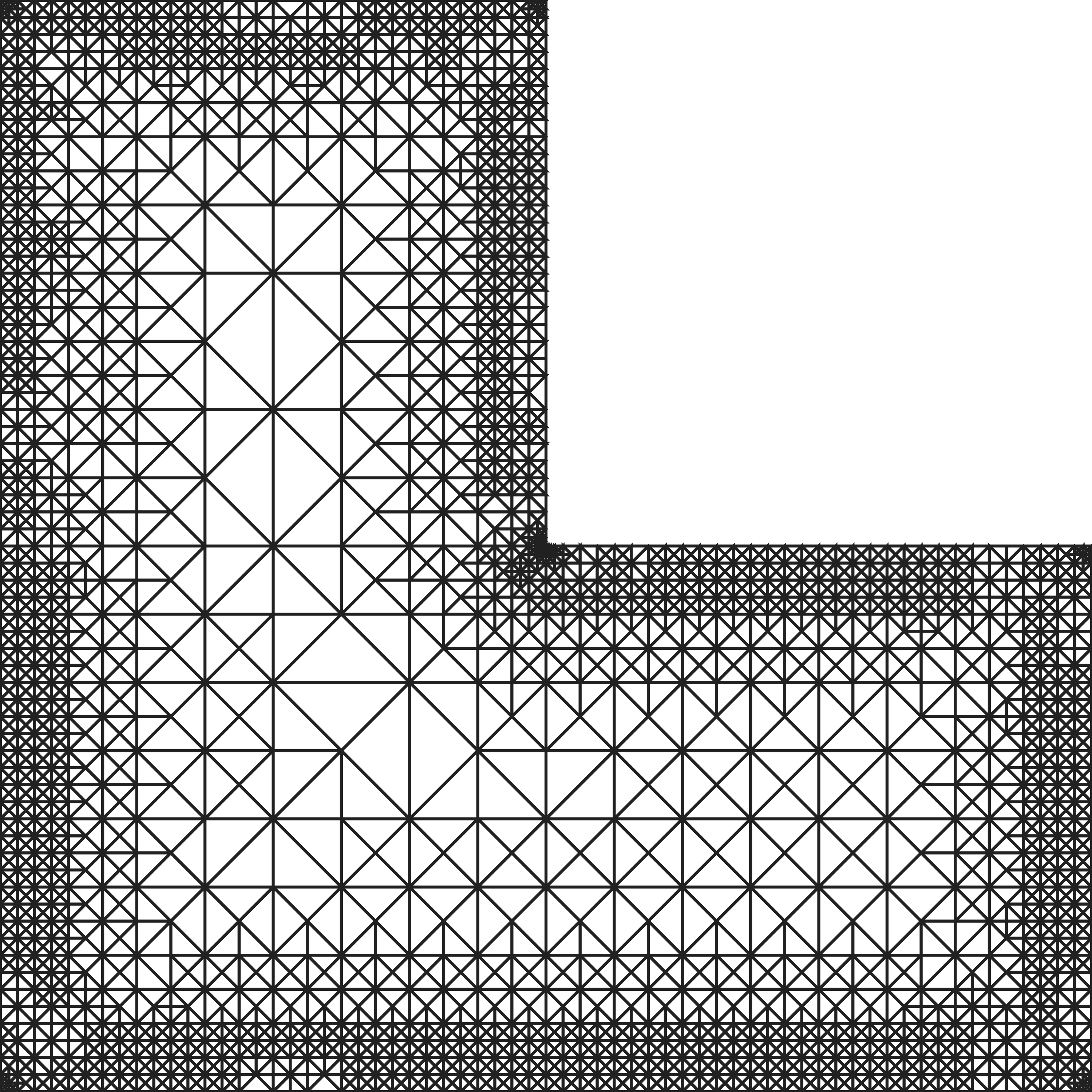};
    \end{axis}
\end{tikzpicture}
        \caption{Mesh \(\TT_{17}\) (4\,898 triangles)}
        \label{fig:semilinear:mesh}
    \end{subfigure}
    \hfill
    \begin{subfigure}{0.47\textwidth}
        \begin{tikzpicture}
    \pgfplotsset{/pgf/number format/fixed}
    \begin{axis}[%
        width=79mm,%
        xmin=-1.1, xmax=1.1,%
        ymin=-1.1, ymax=1.1,%
        zmin=-0.0007, zmax=0.008,%
        font=\footnotesize,%
    ]
        \addplot3 graphics [%
            points={%
                (-1,-1,0)              => (0.1  ,327.4-239.6)
                (1,-1,0)               => (220.1,327.4-327.4)
                (0,1,0)                => (372.2,327.4-209.9)
                (-0.688,0.688,0.00675) => (255.5,327.4-0.0)
            }%
            ]
            {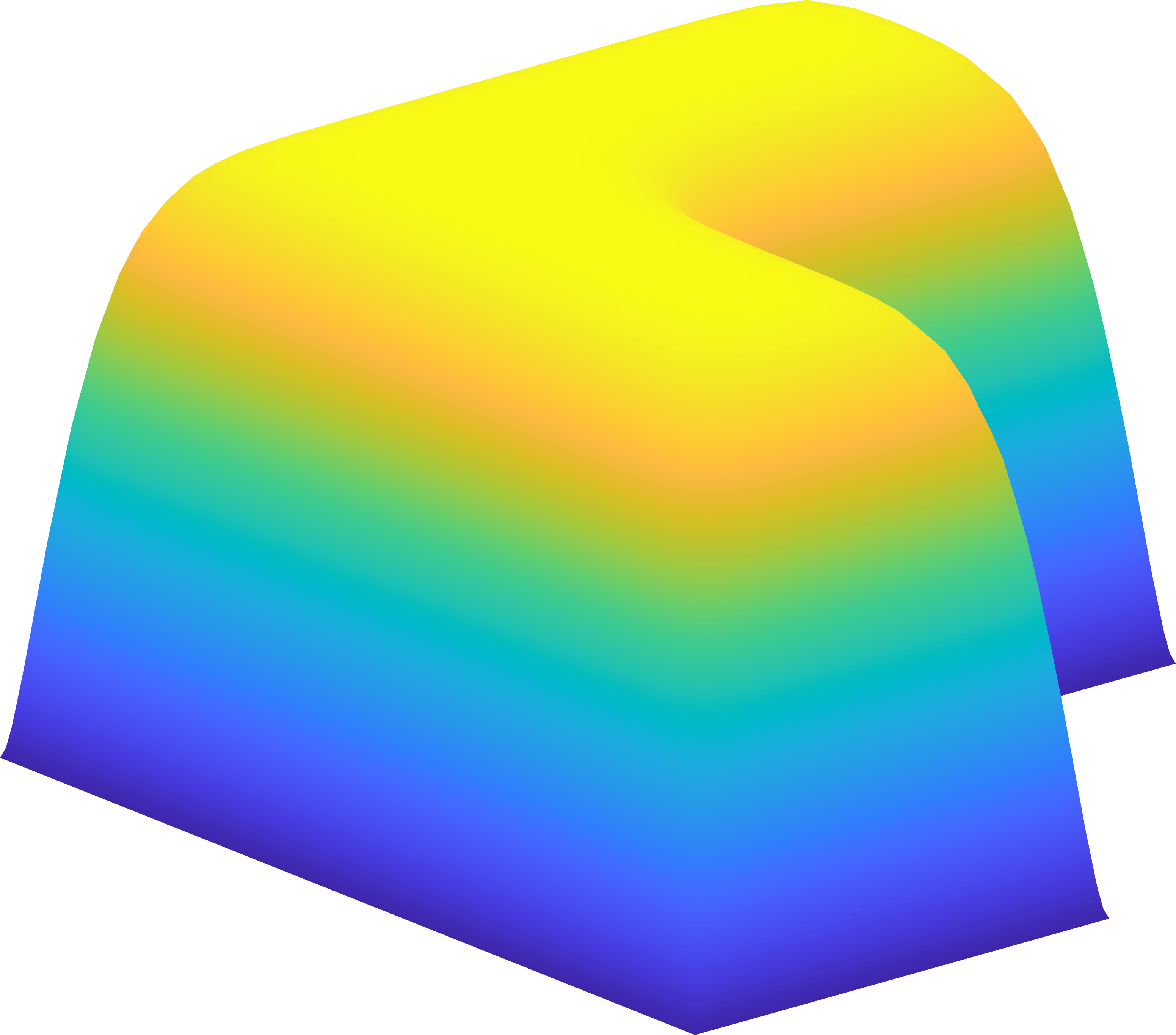};
    \end{axis}
\end{tikzpicture}
        \caption{Solution \(u_\ell^\kk \in \mathcal{S}^p_0(\TT_\ell)\) for \(\ell = 17\)}
        \label{fig:semilinear:solution}
    \end{subfigure}
    \caption{%
        Case 1 from subsection~\ref{sec:semilinear:numerics}:
        Plot of adaptively refined mesh (\textsc{a}) and corresponding discrete solution (\textsc{b})
        generated by Algorithm~\ref{algorithm:AILFEM} with \(p = 2\), \(\theta = 0.3\), \(\protect\lambdalin = 0.1\), and \(\kmin = 1\).
    }
    \label{fig:semilinear:mesh_solution}
\end{figure}
The discretization error estimator \(\eta_\ell(u_\ell^\kk)\) of the final iterates \(u_\ell^\kk\)
converges with the optimal rate for the corresponding polynomial degree as displayed in Figure~\ref{fig:semilinear:convergence:estimator}.
The convergence behavior of the Newton iteration on each level is investigated using the reduction factor
\begin{equation}
    \label{eq:semilinear:reduction_factor}
    r_\ell^k
    \coloneqq
    \norm{F - \AA u_\ell^k}_{\XX_\ell'} \big/ \norm{F - \AA u_\ell^{k-1}}_{\XX_\ell'}
    \quad \text{ for } (\ell, k) \in \QQ \text{ with } k \ge 1.
\end{equation}
Figure~\ref{fig:semilinear:convergence:reduction} shows that the reduction factor is uniformly bounded away from 1.
Due to the strong nonlinearity, there is a relatively large number of iterations on the coarsest level.
As soon as the linearization error is sufficiently reduced, the algorithm continues with mesh refinement and one Newton iteration per level.
The steadily decreasing reduction factors confirm the quadratic convergence of the Newton iteration
in the final phase of Algorithm~\ref{algorithm:AILFEM}.
\begin{figure}
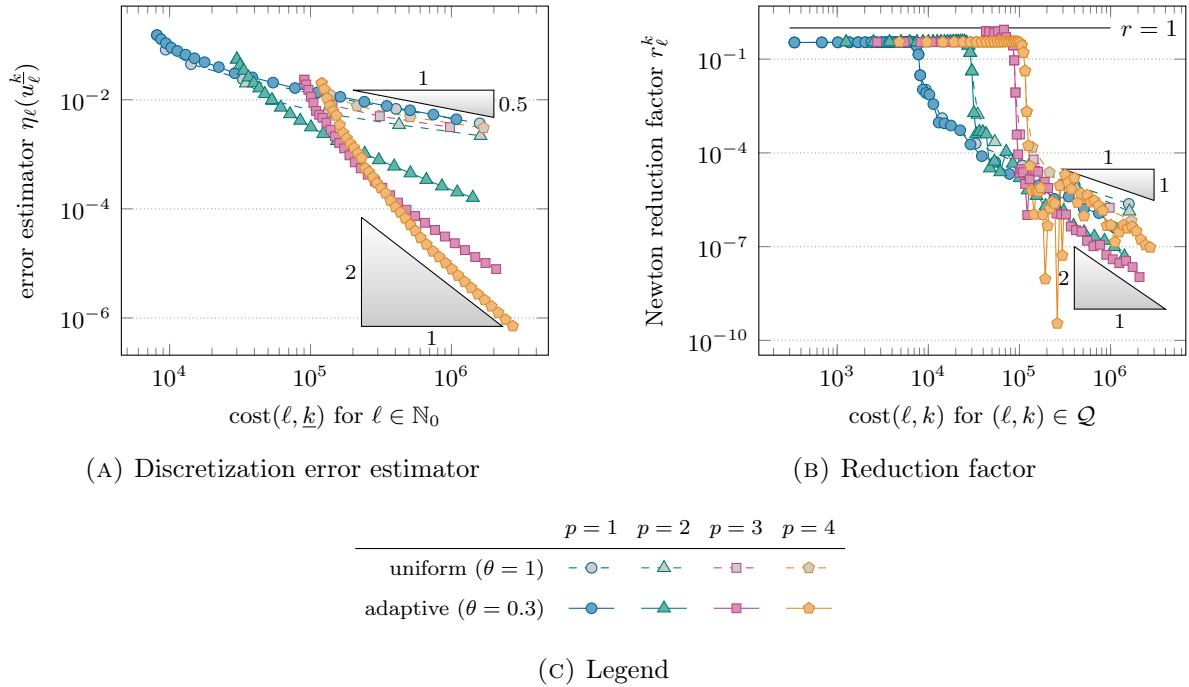

    \centering
    \begin{subfigure}{0.47\textwidth}
        \input{figures/Fig2a_Semilinear_estimator.tex}
        \caption{Discretization error estimator}
        \label{fig:semilinear:convergence:estimator}
    \end{subfigure}
    \hfill
    \begin{subfigure}{0.47\textwidth}
        \input{figures/Fig2b_Semilinear_reduction.tex}
        \caption{Reduction factor}
        \label{fig:semilinear:convergence:reduction}
    \end{subfigure}
    \medskip

    \begin{subfigure}{0.47\textwidth}
        \centering
        \begin{tikzpicture}[>=stealth]
    %
    %
    \colorlet{col1}{TUblue}
    \colorlet{col2}{TUgreen}
    \colorlet{col3}{TUmagenta}
    \colorlet{col4}{TUyellow}
    \colorlet{col5}{purple}
    \colorlet{col6}{green}
    \pgfplotsset{%
        linedefault/.style = {%
            mark = *,%
            mark size = 2pt,%
            every mark/.append style = {solid},%
            gray,%
            every mark/.append style = {fill = gray!60!white}%
        },%
        line1/.style = {%
            linedefault,%
            col1,%
            every mark/.append style = {fill = col1!60!white}%
        },%
        line2/.style = {%
            linedefault,%
            mark = triangle*,%
            mark size = 2.75pt,%
            col2,%
            every mark/.append style = {fill = col2!60!white}%
        },%
        line3/.style = {%
            linedefault,%
            mark = square*,%
            mark size = 1.66pt,%
            col3,%
            every mark/.append style = {fill = col3!60!white}%
        },%
        line4/.style = {%
            linedefault,%
            mark = pentagon*,%
            mark size = 2.2pt,%
            col4,%
            every mark/.append style = {fill = col4!60!white}%
        },%
        line5/.style = {%
            linedefault,%
            mark = diamond*,%
            mark size = 2.75pt,%
            col5,%
            every mark/.append style = {fill = col5!60!white}%
        },%
        line6/.style = {%
            linedefault,%
            mark = halfsquare*,%
            mark size = 1.66pt,%
            col6,%
            every mark/.append style = {fill = col6!60!white}%
        },%
        minorline/.style = {%
            dashed,%
            every mark/.append style = {fill = black!20!white}%
        },%
        majorline/.style = {%
            solid%
        }%
    }

    \matrix [
        matrix of nodes,
        anchor = south,
        font = \scriptsize,
        column 1/.style={anchor=base east},
    ] at (0,0) {
        & \(p=1\)
        & \(p=2\)
        & \(p=3\)
        & \(p=4\)
        \\
        \hline
        \\
        uniform (\(\theta = 1\))
        & \ref*{leg:est:unif:1}
        & \ref*{leg:est:unif:2}
        & \ref*{leg:est:unif:3}
        & \ref*{leg:est:unif:4}
        \\
        adaptive (\(\theta = 0.3\))
        & \ref*{leg:est:adap:1}
        & \ref*{leg:est:adap:2}
        & \ref*{leg:est:adap:3}
        & \ref*{leg:est:adap:4}
        \\
    };
\end{tikzpicture}
        \caption{Legend}
        \label{fig:semilinear:legend}
    \end{subfigure}
    \caption{%
        Case 1 from subsection~\protect\ref{sec:semilinear:numerics}:
        Convergence history plots for the discretization error estimator (\textsc{a}) from~\protect\eqref{eq:estimator:primal}
        and the reduction factor (\textsc{b}) from~\protect\eqref{eq:semilinear:reduction_factor}
        for adaptive and uniform mesh refinements with various polynomial degrees \(p\).
        The remaining parameters read \(\protect\lambdalin = 0.1\) and \(\kmin = 1\).
        The legend (\textsc{c}) applies to both plots.
    }
    \label{fig:semilinear:convergence}
\end{figure}

\subsubsection*{Case 2: With convection}
The second experiment investigates the strong constant convection \(\boldsymbol{b} = [-50, 0]^\top\).
In this case, there exists no corresponding energy functional.
The adaptively refined mesh and the piecewise quadratic approximation to the solution \(u^\exact\)
are displayed in Figure~\ref{fig:semilinear_convective:mesh_solution}.
In addition to the increased refinement areas from Figure~\ref{fig:semilinear:mesh},
the mesh in Figure~\ref{fig:semilinear_convective:mesh} exhibits further refinement at the left boundary layer caused by the strong convection.
\begin{figure}
    \centering
    \begin{subfigure}{0.47\textwidth}
        \begin{tikzpicture}
    \begin{axis}[%
        axis equal image,%
        width=8.5cm,%
        xmin=-1.15, xmax=1.15,%
        ymin=-1.15, ymax=1.15,%
        font=\footnotesize%
    ]
        \addplot graphics [xmin=-1, xmax=1, ymin=-1, ymax=1]
        {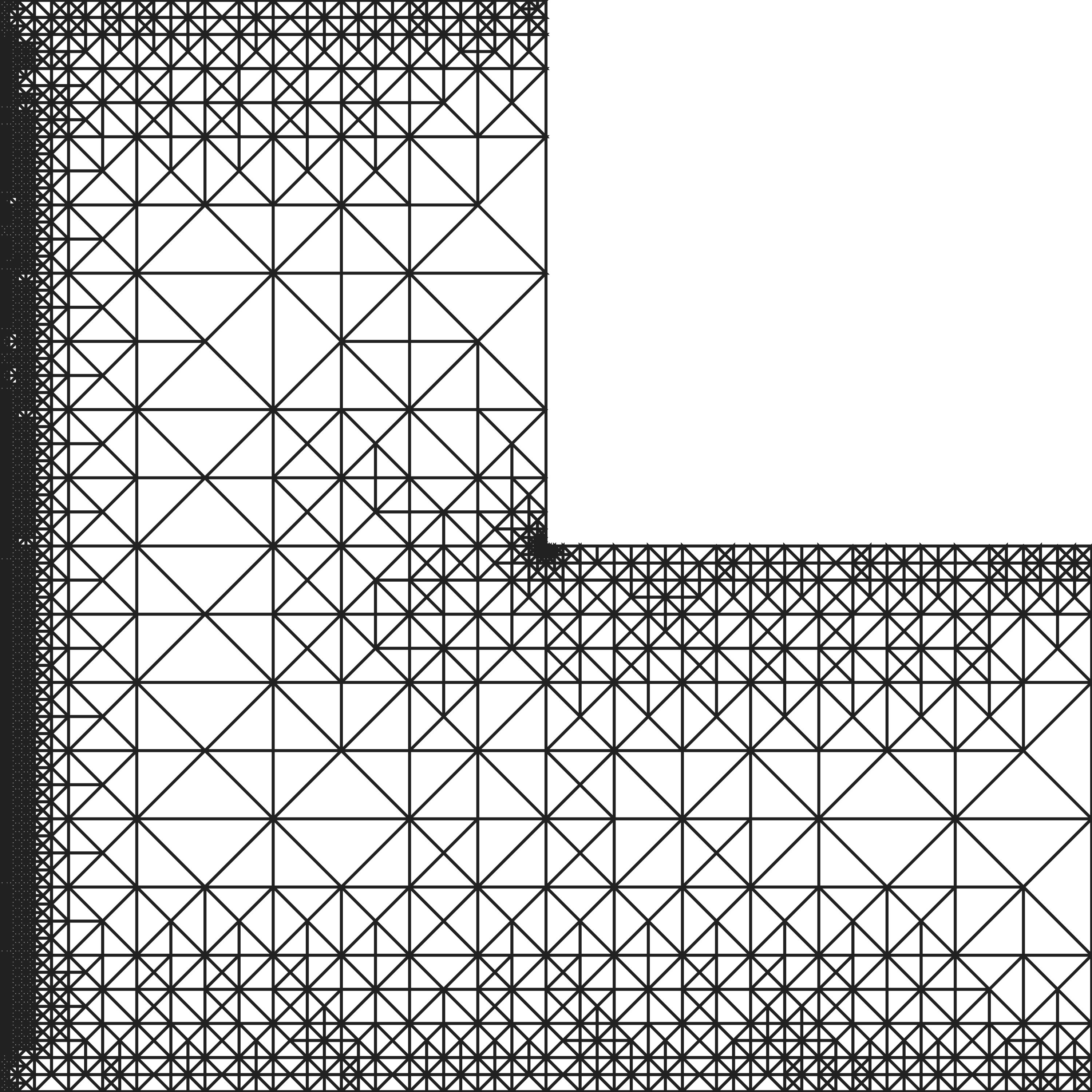};
    \end{axis}
\end{tikzpicture}
        \caption{Mesh \(\TT_{20}\) (5\,248 triangles)}
        \label{fig:semilinear_convective:mesh}
    \end{subfigure}
    \hfill
    \begin{subfigure}{0.47\textwidth}
        \begin{tikzpicture}
    \pgfplotsset{/pgf/number format/fixed}
    \begin{axis}[%
        width=77mm,%
        xmin=-1.1, xmax=1.1,%
        ymin=-1.1, ymax=1.1,%
        zmin=-0.0007, zmax=0.008,%
        font=\footnotesize,%
    ]
        \addplot3 graphics [%
            points={%
                (1,0,0)                 => (0    ,323.2-250.6)
                (-1,1,0)                => (373.7,323.2-323.2)
                (-1,-1,0)               => (446  ,323.2-215.8)
                (-0.906,-0.688,0.00679) => (416.6,323.2-5.4)
            }%
            ]
            {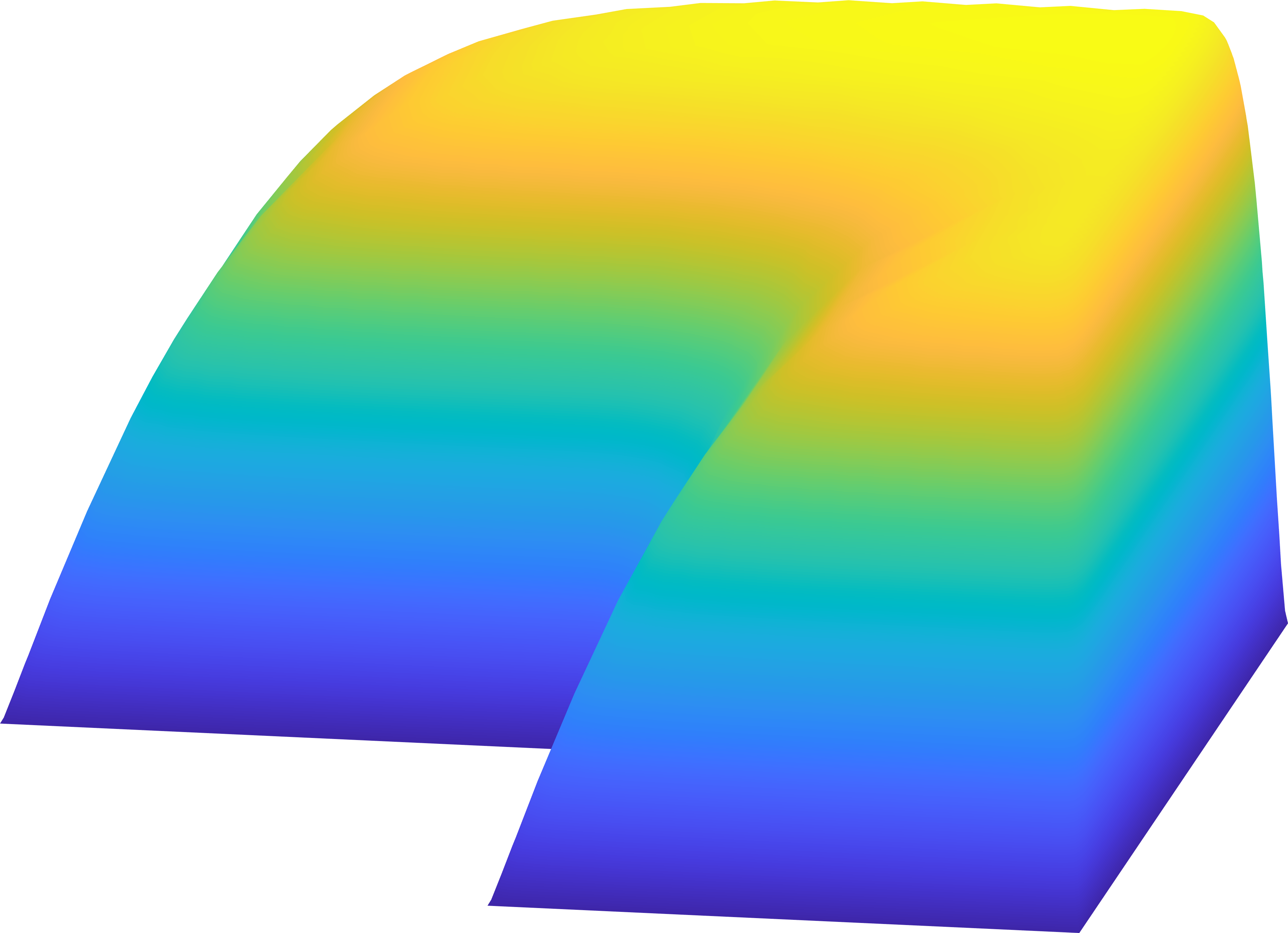};
    \end{axis}
\end{tikzpicture}
        \caption{Solution \(u_\ell^\kk \in \mathcal{S}^p_0(\TT_\ell)\) for \(\ell = 20\)}
        \label{fig:semilinear_convective:solution}
    \end{subfigure}
    \caption{%
        Case 2 from subsection~\ref{sec:semilinear:numerics}:
        Plot of adaptively refined mesh (\textsc{a}) and corresponding discrete solution (\textsc{b})
        generated by Algorithm~\ref{algorithm:AILFEM} with \(p = 2\), \(\theta = 0.3\), \(\protect\lambdalin = 0.1\), and \(\kmin = 1\).
    }
    \label{fig:semilinear_convective:mesh_solution}
\end{figure}
The convergence plots in Figure~\ref{fig:semilinear_convective:convergence} confirm the results from Case~1
and illustrate that the Algorithm~\ref{algorithm:AILFEM} is capable of solving nonsymmetric problems as well.
\begin{figure}
    \centering
    \begin{subfigure}{0.47\textwidth}
        \input{figures/Fig5a_SemilinearConvective_estimator.tex}
        \caption{Discretization error estimator}
    \end{subfigure}
    \hfill
    \begin{subfigure}{0.47\textwidth}
        \input{figures/Fig5b_SemilinearConvective_reduction.tex}
        \caption{Reduction factor}
    \end{subfigure}
    \caption{%
        Case 2 from subsection~\ref{sec:semilinear:numerics}:
        Convergence history plots for the discretization error estimator (\textsc{a}) from~\eqref{eq:estimator:primal}
        and the reduction factor (\textsc{b}) from~\eqref{eq:semilinear:reduction_factor}
        for adaptive and uniform mesh refinements with various polynomial degrees \(p\).
        The remaining parameters read \(\protect\lambdalin = 0.1\) and \(\protect\kmin = 1\).
        The legend from Figure~\ref{fig:semilinear:legend} applies to both plots.
    }
    \label{fig:semilinear_convective:convergence}
\end{figure}



{
    \renewcommand*{\bibfont}{\footnotesize}
    \sloppy
    \printbibliography
}

\appendix

\section{Proof of full R-linear convergence}\label{appendix:linearConvergence}

We provide full details for Step~8 of the proof of Theorem~\ref{theorem:fullRLinearConvergence}. 

\begin{proof}[\textbf{Proof of Step~8 (tail-summability with respect to \(\ell\) and \(k\)) in Theorem~\ref{theorem:fullRLinearConvergence}}]
    Given a double index \((\ell', k') \in \QQ\), tail-summability considers the infinite sum
    \begin{equation}
        \label{eq:tailsummability:split}
        \sum_{\substack{(\ell, k) \in \QQ \\ \abs{\ell', k'} < \abs{\ell,k}}}
        \Eta_{\ell}^{k}
        =
        \sum_{k = k' + 1}^{\kmax[\ell']} \Eta_{\ell'}^{k}
        +
        \sum_{\ell = \ell' + 1}^{\lmax}
        \sum_{k = 0}^{\kmax[\ell]}
        \Eta_{\ell}^{k}.
    \end{equation}
    In order to bound this sum, we distinguish three cases depending on \(\ell'\) and \(\elll\).

	In case 1, suppose that \(\ell' = \elll < \infty\).
	Then, $\kmax[\ell'] = \infty$ and the sum from \(\ell = \ell'+1\) to \(\elll\)
    on the right-hand side of~\eqref{eq:tailsummability:split} vanishes.
    The tail-summability follows from the summability~\eqref{eq:tailsummability:k} in \(k\)
    \begin{equation*}
        \sum_{\substack{(\ell, k) \in \QQ \\ \abs{\ell', k'} < \abs{\ell,k}}}
        \Eta_{\ell}^{k}
        =
        \sum_{k = k' + 1}^{\infty} \Eta_{\ell'}^{k}
        \eqreff{eq:tailsummability:k}\lesssim
        \Eta_{\ell'}^{k'}.
    \end{equation*}

	In case 2, suppose that \(\ell' < \elll < \infty\).
    Then, \(\kmin < \kk[\ell] < \infty\) for all \(\ell' \leq \ell < \elll\) and \(\kk[\elll] = \infty\).
    The tail-summability~\eqref{eq:tailsummability:k} in \(k\) on the levels \(\ell'\) and \(\elll\) provides
    \[
        \sum_{k=k'+1}^{\kk[\ell']} \Eta_{\ell'}^k
        +
        \sum_{k = 0}^{\infty}
        \Eta_{\elll}^{k}
        \eqreff{eq:tailsummability:k}\lesssim
        \Eta_{\ell'}^{k'}
        +
        \Eta_{\elll}^{0}
    .
    \]
    The quasi-contraction~\eqref{eq:quasierror:quasicontraction:case2} and the geometric sum yield
    \begin{equation}
        \label{eq:tailsummability:case2:ell}
        \sum_{\ell=\ell'+1}^{\elll-1}
        \sum_{k=0}^{\kk[\ell]}
        \Eta_\ell^k
        \eqreff{eq:quasierror:quasicontraction:case2}\lesssim
        \sum_{\ell=\ell'+1}^{\elll-1}
        \bigg(\sum_{k=0}^{\kk[\ell]} \rr_0^k \bigg)
        \Eta_\ell^0
        \lesssim
        \sum_{\ell=\ell'+1}^{\elll-1}
        \Eta_\ell^0.
    \end{equation}
    The combination of the two previous displayed formulas with the split~\eqref{eq:tailsummability:split},
    the stability estimates~\eqref{eq:quasierror:stability_in_refinement}
    and~\eqref{eq:quasierror:contraction_in_linearization}, and
    the tail-summability~\eqref{eq:contractionFinalIterates} with respect to \(\ell\) proves
    \begin{align*}
        \sum_{\substack{(\ell, k) \in \QQ \\ \abs{\ell', k'} < \abs{\ell,k}}}
        \Eta_{\ell}^{k} \;
        &\eqreff*{eq:tailsummability:split}{=}
        \sum_{k = k' + 1}^{\kmax[\ell']} \Eta_{\ell'}^{k}
        +
        \sum_{\ell = \ell' + 1}^{\elll-1}
        \sum_{k = 0}^{\kmax[\ell]}
        \Eta_{\ell}^{k}
        +
        \sum_{k = 0}^{\infty}
        \Eta_{\elll}^{k}
        \\
        &\lesssim
        \Eta_{\ell'}^{k'}
        +
        \sum_{\ell=\ell'+1}^{\elll}
        \Eta_\ell^0
        \eqreff{eq:quasierror:stability_in_refinement}\lesssim
        \Eta_{\ell'}^{k'}
        +
        \sum_{\ell=\ell'}^{\elll-1}
        \Eta_\ell^{\kk}
        \eqreff{eq:contractionFinalIterates}\lesssim
        \Eta_{\ell'}^{k'}
        +
        \Eta_{\ell'}^{\kk}
        \eqreff{eq:quasierror:contraction_in_linearization}\lesssim
        \Eta_{\ell'}^{k'}.
    \end{align*}

	In the remaining case 3, suppose that \(\ell' < \elll = \infty\).
    Then, $\kmin \leq \kmax[\ell] < \infty$ for all $(\ell, k) \in \QQ$ with \(\abs{\ell, k} > \abs{\ell', k'}\).
    The argumentation for~\eqref{eq:tailsummability:case2:ell} in case 2 applies to the infinite sum as well and gives
    \[
        \sum_{\ell=\ell'+1}^{\infty}
        \sum_{k=0}^{\kk[\ell]}
        \Eta_\ell^k
        \eqreff{eq:quasierror:quasicontraction:case2}\lesssim
        \sum_{\ell=\ell'+1}^{\infty}
        \bigg(\sum_{k=0}^{\kk[\ell]} \rr_0^k \bigg)
        \Eta_\ell^0
        \lesssim
        \sum_{\ell=\ell'+1}^{\infty}
        \Eta_\ell^0
        \eqreff{eq:quasierror:stability_in_refinement}\lesssim
        \sum_{\ell=\ell'}^{\infty}
        \Eta_\ell^{\kk}
        \eqreff{eq:contractionFinalIterates}\lesssim
        \Eta_{\ell'}^{\kk}
        \eqreff{eq:quasierror:contraction_in_linearization}\lesssim
        \Eta_{\ell'}^{k'}.
    \]
    This, the tail-summability~\eqref{eq:tailsummability:k} in \(k\), and the split~\eqref{eq:tailsummability:split} show
    \[
        \sum_{\substack{(\ell, k) \in \QQ \\ \abs{\ell', k'} < \abs{\ell,k}}}
        \Eta_{\ell}^{k}
        \eqreff{eq:tailsummability:split}=
        \sum_{k = k' + 1}^{\kmax[\ell']} \Eta_{\ell'}^{k}
        +
        \sum_{\ell = \ell' + 1}^{\infty}
        \sum_{k = 0}^{\kmax[\ell]}
        \Eta_{\ell}^{k}
        \eqreff{eq:tailsummability:k}\lesssim
        \Eta_{\ell'}^{k'}.
    \]

    In all three cases, we have verified the estimate
    \[
        \sum_{\substack{(\ell, k) \in \QQ \\ \abs{\ell', k'} < \abs{\ell,k}}}
        \Eta_{\ell}^{k}
        \lesssim
        \Eta_{\ell'}^{k'}.
    \]
    The tail-summability equivalence from~\cite[Lemma~2]{bfmps2025}
    concludes the proof of Step~8.
\end{proof}

\section{Proof of optimal convergence rates}\label{appendix:optimalRates}
This section is devoted to the proof of Theorem~\ref{theorem:optimalRates}.
Following the state-of-the-art convergence analysis of adaptive algorithms with inexact solution
\cite{bps2024,bfmps2025}, it departs with a perturbation argument for the discretization error estimator for
the final iterates.
\begin{lemma}[estimator equivalence]\label{lemma:estimatorEquivalence}
	Suppose the assumptions of Theorem~\ref{theorem:optimalRates}, in particular,
	let $0<\lambdalin< \lambdalin^\exact \coloneqq \min\{1, \alpha/\overline{C}_{\textup{stab}}\}$ with $\CCstab$ from~\eqref{eq:uniform_constants}.
	Then, it holds that
	\begin{equation}\label{eq:estimatorEquivalence}
		\bigg( 1 - \frac{\lambdalin}{\lambdalin^\exact} \bigg) \,
		\eta_\ell(u_\ell^{\kmax})
		\le
		\eta_\ell(u_\ell^\exact)
		\le
		\bigg( 1 + \frac{\lambdalin}{\lambdalin^\exact} \bigg) \,
		\eta_\ell(u_\ell^{\kmax})
		\quad\text{for all } (\ell, \kmax) \in \QQ.
	\end{equation}
	Moreover, for $0 < \theta < \theta^\exact$ with $\theta^\exact$ and the definition of $\thetamark$ given in~\eqref{eq:thetamark},
	the Dörfler criterion for \(u_\ell^\exact\) with parameter \(\thetamark\) from~\eqref{eq:thetamark} implies
	the Dörfler criterion for \(u_\ell^\kmax\) with parameter \(\theta\) satisfying~\eqref{eq:thetamark},
	i.e., for any \(\RR_{\ell} \subseteq \TT_{\ell}\), there holds the implication
	\begin{equation}\label{eq:equivalence_Doerfler}
		\thetamark \, \eta_{\ell}(u_\ell^\exact)^2
		\le
		\eta_{\ell}(\RR_{\ell}; u_\ell^\exact)^2
		\quad
		\implies
		\quad
		\theta \, \eta_{\ell}(u_\ell^{\kmax})^2
		\le
		\eta_{\ell}(\RR_{\ell}; u_\ell^{\kmax})^2.
	\end{equation}
\end{lemma}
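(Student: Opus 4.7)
The plan is to combine the stability axiom~\eqref{axiom:stability} with the linearization error estimate from Lemma~\ref{lemma:linearization_estimator} and the satisfied stopping criterion~\eqref{algorithm:AILFEM:stopping}. For any $(\ell,\kmax) \in \QQ$, the uniform bound $\enorm{u_\ell^\kmax} \le \overline\Bup_0$ from Lemma~\ref{lemma:boundFinalIterate}\textup{(ii)} together with $\enorm{u_\ell^\exact} \le \Bup_0$ from~\eqref{eq:exact:bounded} permits invoking the uniform stability constant $\CCstab$ from~\eqref{eq:uniform_constants}. Combining the stopping criterion with~\eqref{eq:linearization_estimator} then yields the key estimate
\[
    \CCstab\,\enorm{u_\ell^\exact - u_\ell^\kmax}
    \le
    \tfrac{\CCstab}{\alpha}\,\norm{F - \AA u_\ell^\kmax}_{\XX_\ell'}
    \le
    \tfrac{\CCstab\,\lambdalin}{\alpha}\,\eta_\ell(u_\ell^\kmax)
    \le
    \tfrac{\lambdalin}{\lambdalin^\exact}\,\eta_\ell(u_\ell^\kmax),
\]
where the last step uses the definition $\lambdalin^\exact = \min\{1, \alpha/\CCstab\} \le \alpha/\CCstab$.

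For the estimator equivalence~\eqref{eq:estimatorEquivalence}, I apply stability~\eqref{axiom:stability} to $\UU_\ell = \TT_\ell$ in order to obtain $\abs{\eta_\ell(u_\ell^\exact) - \eta_\ell(u_\ell^\kmax)} \le \CCstab\,\enorm{u_\ell^\exact - u_\ell^\kmax}$; the displayed estimate above then immediately gives the two-sided bound.

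For the Dörfler implication~\eqref{eq:equivalence_Doerfler}, I apply stability with $\UU_\ell = \RR_\ell \subseteq \TT_\ell$ to get $\eta_\ell(\RR_\ell, u_\ell^\exact) \le \eta_\ell(\RR_\ell, u_\ell^\kmax) + (\lambdalin/\lambdalin^\exact)\,\eta_\ell(u_\ell^\kmax)$. Abbreviating $\tau \coloneqq \lambdalin/\lambdalin^\exact$ and combining the assumed Dörfler marking for $u_\ell^\exact$ with the lower bound from~\eqref{eq:estimatorEquivalence} yields
\[
    \sqrt{\thetamark}\,(1-\tau)\,\eta_\ell(u_\ell^\kmax)
    \le
    \sqrt{\thetamark}\,\eta_\ell(u_\ell^\exact)
    \le
    \eta_\ell(\RR_\ell, u_\ell^\exact)
    \le
    \eta_\ell(\RR_\ell, u_\ell^\kmax) + \tau\,\eta_\ell(u_\ell^\kmax).
\]
The quantitative choice~\eqref{eq:thetamark} of $\thetamark$ gives $\sqrt{\thetamark}\,(1-\tau) = \sqrt{\theta}+\tau$, so the additive $\tau$-term cancels and the remaining inequality reads $\sqrt{\theta}\,\eta_\ell(u_\ell^\kmax) \le \eta_\ell(\RR_\ell, u_\ell^\kmax)$, which squares to~\eqref{eq:equivalence_Doerfler}.

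The main obstacle is pure bookkeeping rather than a genuine mathematical hurdle: I must track that the stability constant used throughout is the uniform $\CCstab$ from~\eqref{eq:uniform_constants} (which is only justified because Lemma~\ref{lemma:boundFinalIterate} a priori confines all iterates to the ball of radius $\overline\Bup_0$), and I must recognise that the a~priori somewhat opaque definition~\eqref{eq:thetamark} of $\thetamark$ is engineered precisely so that the perturbative term $\tau\,\eta_\ell(u_\ell^\kmax)$ cancels cleanly, leaving the Dörfler bound with the target parameter $\theta$.
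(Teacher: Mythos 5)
Your proof is correct and follows essentially the same route as the paper: both derive the perturbation bound $\CCstab\,\enorm{u_\ell^\exact-u_\ell^\kmax}\le(\lambdalin/\lambdalin^\exact)\,\eta_\ell(u_\ell^\kmax)$ from the stopping criterion and Lemma~\ref{lemma:linearization_estimator}, apply stability to both $\UU_\ell=\TT_\ell$ and $\UU_\ell=\RR_\ell$, and exploit the algebraic identity $\thetamark^{1/2}(1-\lambdalin/\lambdalin^\exact)=\theta^{1/2}+\lambdalin/\lambdalin^\exact$ baked into~\eqref{eq:thetamark} to cancel the perturbation term. The only cosmetic difference is that you state the perturbation bound once and reuse it, whereas the paper records the two subset estimates separately as intermediate displays.
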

\begin{proof}
	The proof consists of two steps.

	\emph{Step 1.}
	The linearization error estimate~\eqref{eq:linearization_estimator} and
	the stopping criterion~\eqref{algorithm:AILFEM:stopping} prove
	\[
		\enorm{u^\exact_\ell - u_\ell^\kmax}
		\eqreff*{eq:linearization_estimator}\le
		\alpha^{-1} \, \Vert F - \AA u_\ell^\kmax \Vert_{\XX_\ell'}
		\eqreff*{algorithm:AILFEM:stopping}\le
		\lambdalin \alpha^{-1} \, \eta_\ell(u_\ell^\kmax).
	\]
	This and the stability~\eqref{axiom:stability} show, for any subset $\UU_{\ell} \subseteq \TT_\ell$, that
	\begin{align}
		\label{eq:estimatorEquivalenceStep2}
		\eta_\ell(\UU_\ell, u_\ell^\exact)
		\,&\eqreff*{axiom:stability}\le\,
		\eta_\ell(\UU_\ell, u_\ell^\kmax)
		+
		\overline{C}_{\textup{stab}} \,
		\enorm{u^\exact_\ell - u_\ell^\kmax}
		\le
		\eta_\ell(\UU_\ell, u_\ell^\kmax)
		+
		(\lambdalin/\lambdalin^\exact) \,
		\eta_\ell(u_\ell^\kmax),
		\\
		\intertext{and similarly,}
		\label{eq:estimatorEquivalenceStep}
		\eta_\ell(\UU_\ell, u_\ell^\kmax)
		&\le
		\eta_\ell(\UU_\ell, u_\ell^\exact)
		+
		(\lambdalin/\lambdalin^\exact) \,
		\eta_\ell(u_\ell^\kmax).
	\end{align}
	The choice $\UU_\ell = \TT_\ell$ and the choice $0 < \lambdalin < \lambdalin^\exact$ allow
	to verify~\eqref{eq:estimatorEquivalence}.

	\emph{Step 2.}
	For \(\UU_\ell = \RR_{\ell} \subseteq \TT_\ell\) with
	\(\thetamark^{1/2} \, \eta_{\ell}(u_\ell^\exact) \le \eta_{\ell}(\RR_{\ell}, u_\ell^\exact)\),
	the estimates~\eqref{eq:estimatorEquivalenceStep2}--\eqref{eq:estimatorEquivalenceStep}, and~\eqref{eq:thetamark} imply
	\begin{align*}
		[1 - (\lambdalin/\lambdalin^\exact)]
		\, \thetamark^{1/2} \,
		\eta_\ell(u_\ell^\kmax)
		\, &\eqreff*{eq:estimatorEquivalenceStep}\le \,
		\thetamark^{1/2} \, \eta_\ell(u_\ell^\exact)
		\le
		\eta_\ell(\RR_\ell, u_\ell^\exact)
		\\
		&\eqreff*{eq:estimatorEquivalenceStep2}\le \,
		\eta_\ell(\RR_\ell, u_\ell^\kmax)
		+
		(\lambdalin/\lambdalin^\exact) \,
		\eta_\ell(u_\ell^\kmax)
		\\
		&\eqreff*{eq:thetamark}=\, \,
		\eta_\ell(\RR_\ell, u_\ell^\kmax)
		+
		\big[\thetamark^{1/2} \, (1 - (\lambdalin/\lambdalin^\exact)) - \theta^{1/2}] \,
		\eta_\ell(u_\ell^\kmax).
	\end{align*}
	This yields \(\theta^{1/2} \, \eta_\ell(u_\ell^\kmax) \le \eta_\ell(\RR_\ell, u_\ell^\kmax)\) and concludes the proof.
\end{proof}

\begin{proof}[\textbf{Proof of Theorem~\ref{theorem:optimalRates}}]
	Without loss of generality assume that \(\norm{u^\exact}_{\mathbb{A}_r} < \infty\).
	Since full R-linear convergence from Theorem~\ref{theorem:fullRLinearConvergence} implies
	the equivalence of convergence rates with respect to the number of degrees of freedom and
	to the computational cost \cite[Corollary~11]{bfmps2025}, it suffices to prove
	\begin{equation}\label{eq:optimalRates}
		\copt \, \norm{u^\exact}_{\mathbb{A}_r} \le  \sup_{(\ell, k) \in \QQ}
		(\# \TT_\ell)^r \,
		\Eta_\ell^{k}
		\le
		\Copt \,
		\max\{\norm{u^\exact}_{\mathbb{A}_r}, \, \Eta_0^0\}
	\end{equation}
	The proof is subdivided into four steps.

	\emph{Step 1.}
	Let \(0 < \thetamark < \theta^\exact\) as defined in~\eqref{eq:thetamark}
	and fix any \(0 \le \ell' < \lmax\).
	Stability~\eqref{axiom:stability} with constant \(\overline{C}_{\textup{stab}}\) from~\eqref{eq:uniform_constants}and
	discrete reliability~\eqref{axiom:discreteReliability} enable
	the application of~\cite[Lemma~4.14]{axioms} asserting
	the existence of a set \(\RR_{\ell^\prime} \subseteq \TT_{\ell^\prime}\) such that
	\begin{equation*}
		\# \RR_{\ell^\prime}
		\lesssim
		\norm{u^\exact}_{\mathbb{A}_r}^{1/r} \,
		[\eta_{\ell^\prime}(u_{\ell^\prime}^\exact)]^{-1/r}
		\quad \text{and} \quad
		\thetamark \, \eta_{\ell^\prime}(u_{\ell^\prime}^\exact)
		\le
		\eta_{\ell^\prime}(\RR_{\ell^\prime}, u_{\ell^\prime}^\exact).
	\end{equation*}
	Since~\eqref{eq:equivalence_Doerfler} implies that $\RR_{\ell'}$ satisfies also the Dörfler criterion
	$\theta \, \eta_{\ell'}(u_{\ell'}^\kmax)^2 \le \eta_{\ell'}(\RR_{\ell'}; u_{\ell'}^\kmax)^2$
	for the final iterate $u_{\ell'}^{\kmax}$ with $0 < \theta < \theta^\exact$,
	this proves
	\begin{equation*}
		\# \MM_{\ell'}
		\le
		\Cmark \,
		\# \RR_{\ell'}
		\lesssim
		\norm{u^\exact}_{\mathbb{A}_r}^{1/r} \,
		[
		\eta_{\ell^\prime}(u_{\ell^\prime}^{\exact})
		]^{-1/r}.
	\end{equation*}
	Full R-linear convergence~\eqref{eq:fullRLinearConvergence},
	the stopping criterion~\eqref{algorithm:AILFEM:stopping},
	and the equivalence~\eqref{eq:estimatorEquivalence} prove
	\begin{equation*}
		\Eta_{\ell'+1}^0
		\eqreff{eq:fullRLinearConvergence}\lesssim
		\Eta_{\ell'}^\kmax
		=
		\norm{F - \AA u_{\ell'}^\kmax}_{\XX_{\ell'}'}
		+
		\eta_{\ell'}(u_{\ell'}^{\kmax})
		\eqreff{algorithm:AILFEM:stopping}\lesssim
		\eta_{\ell'}(u_{\ell'}^{\kmax})
		\eqreff{eq:estimatorEquivalence}\lesssim
		\eta_{\ell'}(u_{\ell'}^\exact).
	\end{equation*}
	Consequently, the two previous displayed formulas establish
	\begin{equation}
		\label{eq:R_Delta}
		\# \MM_{\ell'}
		\lesssim
		\norm{u^\exact}_{\mathbb{A}_r}^{1/r} \,
		[ \eta_{\ell'}(u_{\ell'}^\exact) 	]^{-1/r}
		\lesssim
		\norm{u^\exact}_{\mathbb{A}_r}^{1/r} \,
		(\Eta_{\ell^\prime+1}^0)^{-1/r}.
	\end{equation}

	\emph{Step 2.}
	For $(\ell, k) \in \QQ$, full R-linear convergence \eqref{eq:fullRLinearConvergence} and the geometric series prove
	\begin{align}\label{eq:lin_cv_sum}
		\begin{split}
			\sum_{\substack{ (\ell',k') \in \QQ \\ |\ell',k'| \le |\ell,k|}}
			(\Eta_{\ell'}^{k'})^{-1/r}
			&\eqreff{eq:fullRLinearConvergence}
			\lesssim
			(\Eta_{\ell}^{k})^{-1/r}
			\sum_{\substack{(\ell',k') \in \QQ \\ |\ell',k'| \le |\ell,k|}}
			(\qlin^{1/r})^{|\ell, k| - |\ell', k'|}
			\lesssim
			(\Eta_{\ell}^{k})^{-1/r}.
		\end{split}
	\end{align}
	The mesh-closure estimate from~\cite{bdd2004, s2008, kpp2013, dgs2023} reads
	\begin{equation}\label{eq:meshClosure}
		\# \TT_\ell - \# \TT_0
		\lesssim
		\sum_{\ell' = 0}^{\ell-1} \# \MM_{\ell'}.
	\end{equation}
	This and the estimates~\eqref{eq:R_Delta}--\eqref{eq:lin_cv_sum} verify
	\begin{equation}
		\label{eq:optimality:final}
		\begin{split}
			\# \TT_\ell - \# \TT_0 \, \,
			&\eqreff*{eq:meshClosure}\lesssim \, \,
			\sum_{\ell' = 0}^{\ell-1} \# \MM_{\ell'}
			\eqreff{eq:R_Delta}\le
			\norm{u^\exact}_{\mathbb{A}_r}^{1/r} \,
			\sum_{\ell' = 0}^{\ell-1}
			(\Eta_{\ell'+1}^{0})^{-1/r}
			\\
			&\le
			\norm{u^\exact}_{\mathbb{A}_r}^{1/r}
			\sum_{\substack{(\ell',k') \in \QQ \\ |\ell',k'| \le |\ell,k|}}
			(\Eta_{\ell'}^{k'})^{-1/r}
			\eqreff{eq:lin_cv_sum}
			\lesssim
			\norm{u^\exact}_{\mathbb{A}_s}^{1/r}
			(\Eta_{\ell}^{k})^{-1/r}
			\quad \text{ for all } (\ell, k) \in \QQ.
		\end{split}
	\end{equation}
	For \(\ell \geq 1\), note that \(1 \le \# \TT_{\ell} - \# \TT_{0}\) yields
	$\# \TT_{\ell} - \# \TT_{0} +1 \le 2 \, (\#\TT_{\ell} - \# \TT_{0})$.
	For all $\TT_{\ell} \in \T$, an elementary calculation~\cite[Lemma~22]{bhp2017} shows that
	\[
		\# \TT_{\ell}
		\le
		\# \TT_{0} \, (\# \TT_{\ell} - \# \TT_{0} +1)
		\leq
		2 \, \# \TT_{0} \, (\# \TT_{\ell} - \# \TT_{0}).
	\]
	Hence, it follows from~\eqref{eq:optimality:final} that
	\[
		(\# \TT_{\ell})^r \, \Eta_\ell^k
		\lesssim
		\norm{u^\exact}_{\mathbb{A}_r}
		\quad \text{ for all } (\ell, k) \in \QQ \text{ with } \ell \ge 1.
	\]
	For $\ell=0$, full R-linear convergence~\eqref{eq:fullRLinearConvergence} proves that
	\[
		(\# \TT_0)^r \,
		\Eta_0^k
		\eqreff{eq:fullRLinearConvergence}\lesssim
		\Eta_0^0
		\quad \text{ for all } (0, k) \in \QQ.
	\]
	The combination of the two previous formulas results in the upper bound in~\eqref{eq:optimalRates}.

	\emph{Step 3.}
	In the first case \(\lmax = \infty\) with \(\# \TT_{\ell} \to \infty\) as \(\ell \to \infty\),
	for given \(N \in \N\),
	choose the maximal index \(\ell' \in \N_0\) with \(\# \TT_{\ell'} - \# \TT_{0} \le N\).
	The NVB algorithm ensures
	\begin{equation*}
		N + 1 < \# \TT_{\ell'+1} - \# \TT_{0} + 1
		\le
		\# \TT_{\ell'+1}
		\lesssim
		\# \TT_{\ell'}.
	\end{equation*}
	Since \(\TT_{\ell'} \in \T\), stability~\eqref{axiom:stability} and
	the a posteriori estimate~\eqref{eq:linearization_estimator} prove that
	\begin{equation*}
		\min_{\TT_{\rm opt} \in \T_N} \eta_{\rm opt}(u_{\rm opt}^\exact)
		\le
		\eta_{\ell'}(u_{\ell'}^\exact)
		\eqreff{axiom:stability}\lesssim
		\eta_{\ell'}(u_{\ell'}^{k'})
		+
		\norm{F- u_{\ell'}^{k'}}_{\XX_{\ell'}'}
		\\
		\eqreff{eq:linearization_estimator}\lesssim
		\Eta_{\ell'}^{k'} \quad \text{ for }k' \in \N_0 \text{ with } (\ell', k') \in \QQ.
	\end{equation*}
	The two previous displayed estimates result in
	\begin{equation}\label{eq:optimality:combined}
		(N + 1)^s \,
		\min_{\TT_{\rm opt} \in \T_N} \eta_{\rm opt}(u_{\rm opt}^\exact)
		\lesssim
		(\# \TT_{\ell'})^s \, \Eta_{\ell'}^{k'}
		\le
		\sup_{(\ell, k) \in \QQ} (\# \TT_{\ell})^s \, \Eta_{\ell}^{k}.
	\end{equation}
	The supremum over all \(N \in \N\) shows the lower bound in~\eqref{eq:optimalRates} for \(\lmax = \infty\).

	\emph{Step~4.}
	In the remaining case (\(\lmax < \infty\)),
	Lemma~\ref{lemma:kLoop} yields \(\eta_\elll(u_\elll^\exact) = 0\) and \(u_\elll^\exact = u^\exact\).
	If \(\lmax = 0\), then \(\| u^\exact \|_{\mathbb{A}_r} = 0\) and there is nothing to prove.
	If \(\lmax > 0\), let \(0 \le N < \# \TT_{\lmax} - \# \TT_{0}\).
	As in Step~3, the maximal index \(0 \le \ell' < \lmax\) with \(\# \TT_{\ell'} - \# \TT_{0} \le N\) satisfies
	\begin{equation*}
		\norm{u^\exact}_{\mathbb{A}_r}
		=
		\sup_{0 \le N < \# \TT_{\lmax} - \# \TT_{0}}
		\bigl(
			(N+1)^s
			\min_{\TT_{\textup{opt}} \in \T_N} \eta_{\textup{opt}}(u_{\mathrm{opt}}^\exact)
		\bigr)
		\eqreff{eq:optimality:combined}\lesssim
		\sup_{(\ell, k) \in \QQ} (\# \TT_{\ell})^r \, \Eta_{\ell}^{k}.
	\end{equation*}
	This proves the lower bound in~\eqref{eq:optimalRates} also for \(\lmax < \infty\) and the proof is complete.
\end{proof}



\end{document}